\documentclass{article}
\usepackage[utf8]{inputenc}
\usepackage{url}
\usepackage{hyperref}
\usepackage{graphicx}
\usepackage{amsmath,amssymb}
\allowdisplaybreaks
\usepackage{float}
\usepackage{graphicx}
\usepackage{appendix}
\usepackage{listings} 
\usepackage{subfigure}
\usepackage{cite}
\usepackage{bm}
\usepackage{makecell}
\usepackage{multirow}
\usepackage{algorithm}  
\usepackage{algorithmicx}
\usepackage{algpseudocode}  

\usepackage{multicol}
\newcommand\keywords[1]{\textbf{Keywords}:#1}
\newtheorem{theorem}{Theorem}[section]
\newtheorem{lemma}{Lemma}[section]
\newtheorem{remark}{Remark}[section]
\newtheorem{corollary}{Corollary}[section]

\newenvironment{proof}{\paragraph{Proof:}}{\hfill$\square$}
\makeatletter
\newenvironment{breakablealgorithm}
  {\begin{center}
     \refstepcounter{algorithm}
     \hrule height.8pt depth0pt \kern2pt
     \renewcommand{\caption}[2][\relax]{
       {\raggedright\textbf{\ALG@name~\thealgorithm} ##2\par}
       \ifx\relax##1\relax 
         \addcontentsline{loa}{algorithm}{\protect\numberline{\thealgorithm}##2}
       \else
         \addcontentsline{loa}{algorithm}{\protect\numberline{\thealgorithm}##1}
       \fi
       \kern2pt\hrule\kern2pt
     }
  }{\kern2pt\hrule\relax
   \end{center}
  }
\makeatother

\title{A New Reduced Basis Method for Parabolic Equations   Based on Single-Eigenvalue Acceleration}
\author{Qijia Zhai, Qingguo Hong, Xiaoping Xie}
\date{\today}

\begin{document}

\maketitle


\begin{abstract}
In this paper, we  develop  a new reduced basis (RB) method, named as Single Eigenvalue Acceleration Method (SEAM), for second order parabolic equations with homogeneous Dirichlet boundary conditions. The high-fidelity   numerical method  adopts the backward Euler scheme and  conforming simplicial finite elements  for the temporal and spatial discretizations, respectively.  Under the assumption that the time step size is sufficiently small and time steps   are not very large,  we show that the singular value distribution of the high-fidelity solution matrix $U$ is close to that of a rank one matrix.
We select the eigenfunction associated to the principal eigenvalue of the matrix $U^\top U$   as the basis of    Proper Orthogonal Decomposition (POD) method so as to obtain SEAM and a parallel SEAM. Numerical experiments confirm the efficiency of the new method.
\end{abstract}
\keywords{ Reduced basis method; Proper orthogonal decomposition; Singular value; Second order parabolic equation}

\section{Introduction}
The Reduced Basis (RB) method is a type of    model order reduction   approach for numerical approximation of problems involving repeated solution of differential equations generated in engineering and applied sciences. It was first proposed in \cite{doi:10.2514/3.7539} for the analysis of nonlinear structures in the 1970s, and later extended to many other problems such as partial differential equations (PDEs) with multiple parameters or different initial conditions \cite{ eftang2011hp,https://doi.org/10.1002/zamm.19950750709, Rheinboldt1993OnTT,Fink1983OnTE, 10.1007/BF01391412,doi:10.1137/0910047,GUNZBURGER1989123,alfioquarteroni_2016_reduced},  
PDE-constrained parametric optimization and control problems \cite{iapichino2017multiobjective,manzoni2019certified,ito1998reduced, ito1998reduced1}, 
and inverse problems \cite{lieberman2010parameter, maday2015parameterized}.
It should be mentioned that the 
work in \cite{10.1115/1.1448332, doi:10.2514/6.2003-3847} has led to a decisive improvement in the computational aspects of RB methods, owing to an efficient criterion for the selection of the basis functions, a systematic splitting of the computational procedure into an offline (parameter-independent) phase and an online (parameter-dependent) phase, and the use of  posteriori error estimates that guarantee certified numerical solutions for the reduced problems. The  RB methods that include the offline and online phases  as their essential constituents have become the  most widely used ones.

The Proper Orthogonal Decomposition (POD) method, combined with the Galerkin projection method, is a typical RB method. 
It uses   a set of orthonormal bases, which can represent the known data in the sense of least squares,  to linearly approximate the target variables so as to   obtain a low-dimensional approximate model. Since it is optimal in the least square sense, the POD method has the property of completely relying,  without making any a priori assumption, on the data. 

The POD method, whose predecessor was initially presented by K.Pearson\cite{FRSLIIIOL} in 1901 as an eigenvector analysis method and was originally conceived in the framework of continuous second-order processes by Berkooz\cite{Berkooz1992}, 
has been widely used in many fields with different appellations. In the singular value analysis and sample identification, the method is called the Karhumen-Loeve expansion \cite{keinosuke_1990_introduction}. In statistics it is named as the principal component analysis (PCA) \cite{jolliffe_2011_principal}. In geophysical fluid dynamics and meteorological sciences, it is called  the  empirical orthogonal function method (EOF)\cite{majda_2003_systematic, selten_1997_baroclinic}. We can also see the widespread applications of POD method in fluid dynamics and viscous structures\cite{aubry_1988_the, crommelin_2004_strategies, holmes_2012_turbulence, LUMLEY1981215, moin_1989_characteristiceddy, rajaee_1994_lowdimensional, sirovich_1987_turbulence1, sirovich_1987_turbulence2, sirovich_1987_turbulence3}, optimal fluid control problems \cite{joslin_1997_selfcontained, ly_2002_proper}, numerical analysis of PDEs \cite{LUO2019xi, kunisch_2001_galerkin, kunisch_2002_galerkin, ahlman_2002_proper,cao_2006_reducedorder, 
luo_2007_an, luo_2007_proper, luo_2007_finite} and  machine learning \cite{Mainini20151612, doi:10.1080/21681163.2015.1030775, HESTHAVEN201855,doi:10.2514/1.J056161,SWISCHUK2019704}. 

In this paper, we  develop  a new RB/POD method, named as Single Eigenvalue Acceleration Method (SEAM), for a full discretization, using backward Euler scheme for temporal discretization and continuous simplicial finite elements for spatial discretization, of second order parabolic equations with homogeneous Dirichlet boundary conditions. 
 The idea of  SEAM   is inspired by  a POD numerical experiment for   a one-dimensional heat conduction  problem:
\begin{equation}\label{nml1}
\left\{\begin{array}{ll}
\frac{\partial u}{\partial t}=\frac{\partial^{2} u}{\partial x^{2}}, &  x \in (0,1), t \in (0,0.1],\\
u(x, 0)=\sin(4\pi x), & x \in (0,1), \\
u(0,t) =u(1,t)= 0,& t\in (0,0.1].
\end{array}\right.
\end{equation}
When we focus on the numerical solution matrix $U$ of (\ref{nml1}), where each column consists of the value of the numerical solution at a node, and the number of columns is the same as the number of time steps (We  divide $(0,T]$ into 1000 equal parts and $D$ into 99 equal parts to get the high-fidelity numerical solution; see Figure \ref{fig1}), we observe something interesting: the principal singular value is much larger than   other singular values of the numerical solution matrix! Here we recall  that  the singular values of $U$ are the arithmetic square roots of the eigenvalues of $U^\top U$. Notice that when we use the POD method   the choice of basis functions is often empirical.     The observation tells us that we only need to choose one basis function when solving this equation with POD. 

We show in    Figure  \ref{fig1} ,  Figure \ref{fig2}
and Table \ref{tab1}   the SEAM (POD with one basis function) solution and computational details, respectively.  We can see 
 that when SEAM is used, 
the computational  time can be saved greatly, and the relative error between the SEAM solution and the high-fidelity solution is very small. 

\begin{figure}[H]
    \centering
   \subfigure[High-fidelity numerical solution]{\includegraphics[width=5cm, height =4cm]{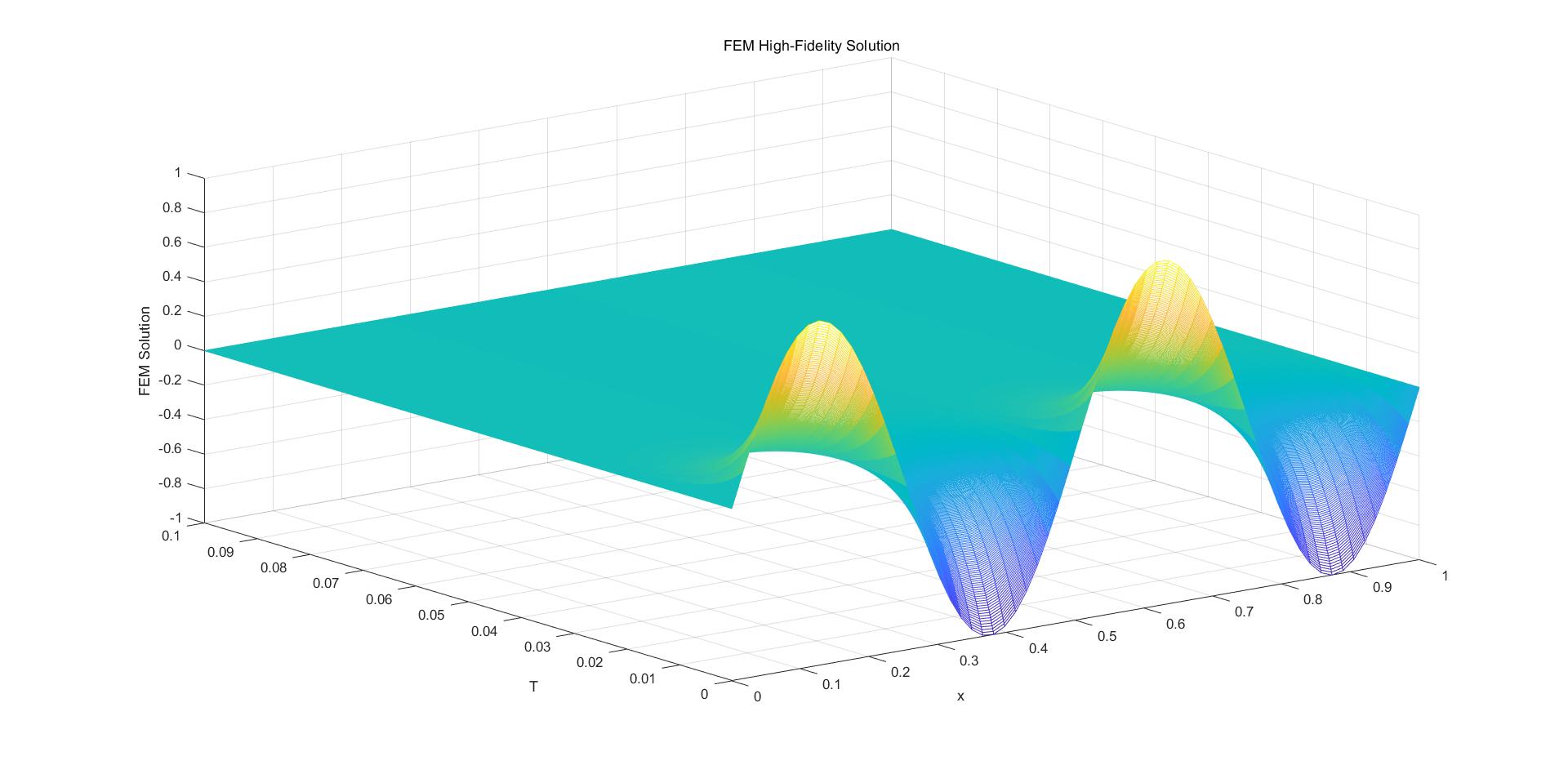}}\quad 
    \subfigure[SEAM solution]{\includegraphics[width=5cm, height =4cm]{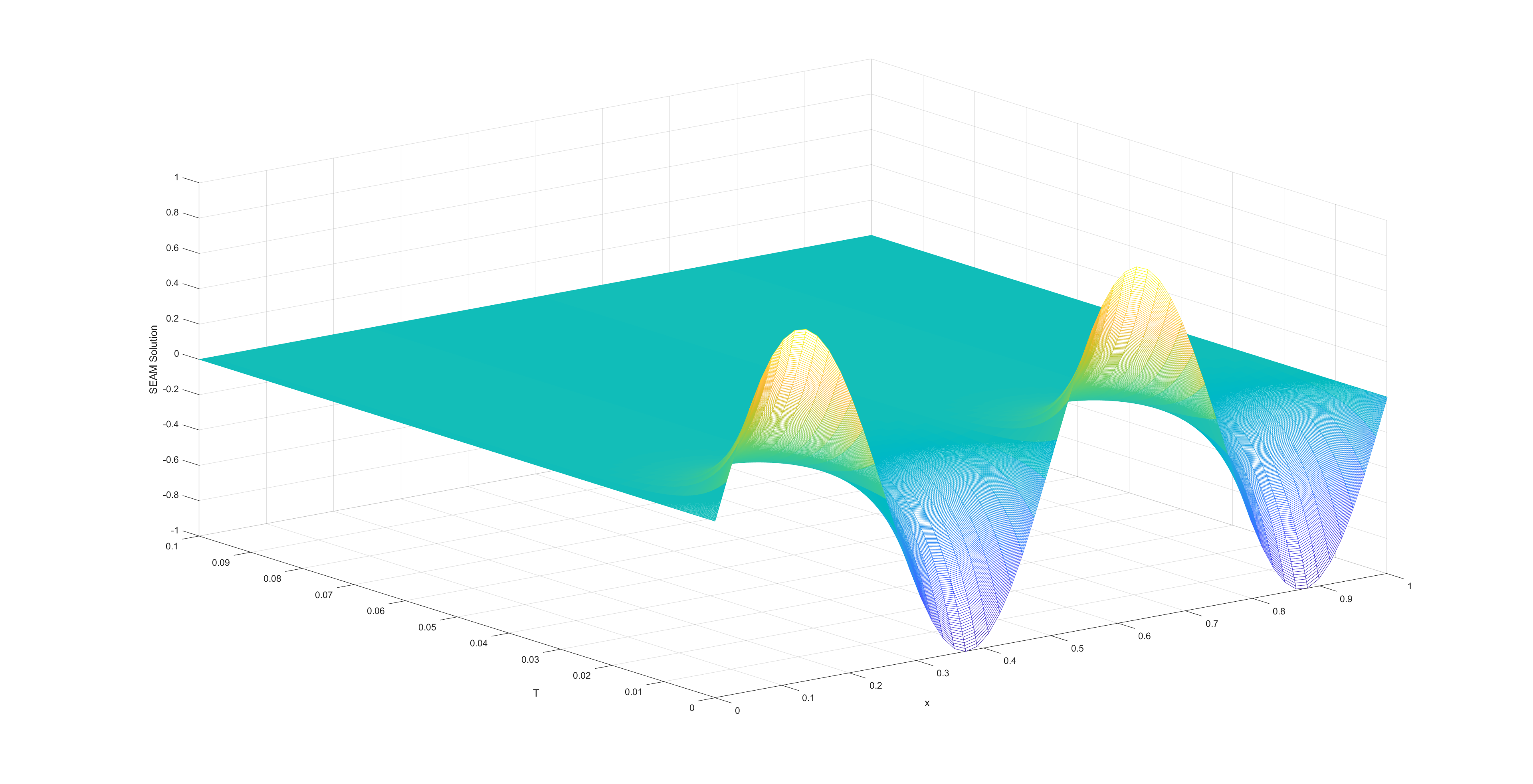}}
    \caption{High-Fidelity numerical solution and  SEAM solution for  (\ref{nml1}) }\label{fig1}

\end{figure}


\begin{figure}[H]
 \centering

    \subfigure[The first 20   eigenvalues of $U^\top U$ (from   big to small):
    the 1st one is 1007.63, and the 2nd one is 8.7e-9 ]{\includegraphics[width=5cm, height =4cm]{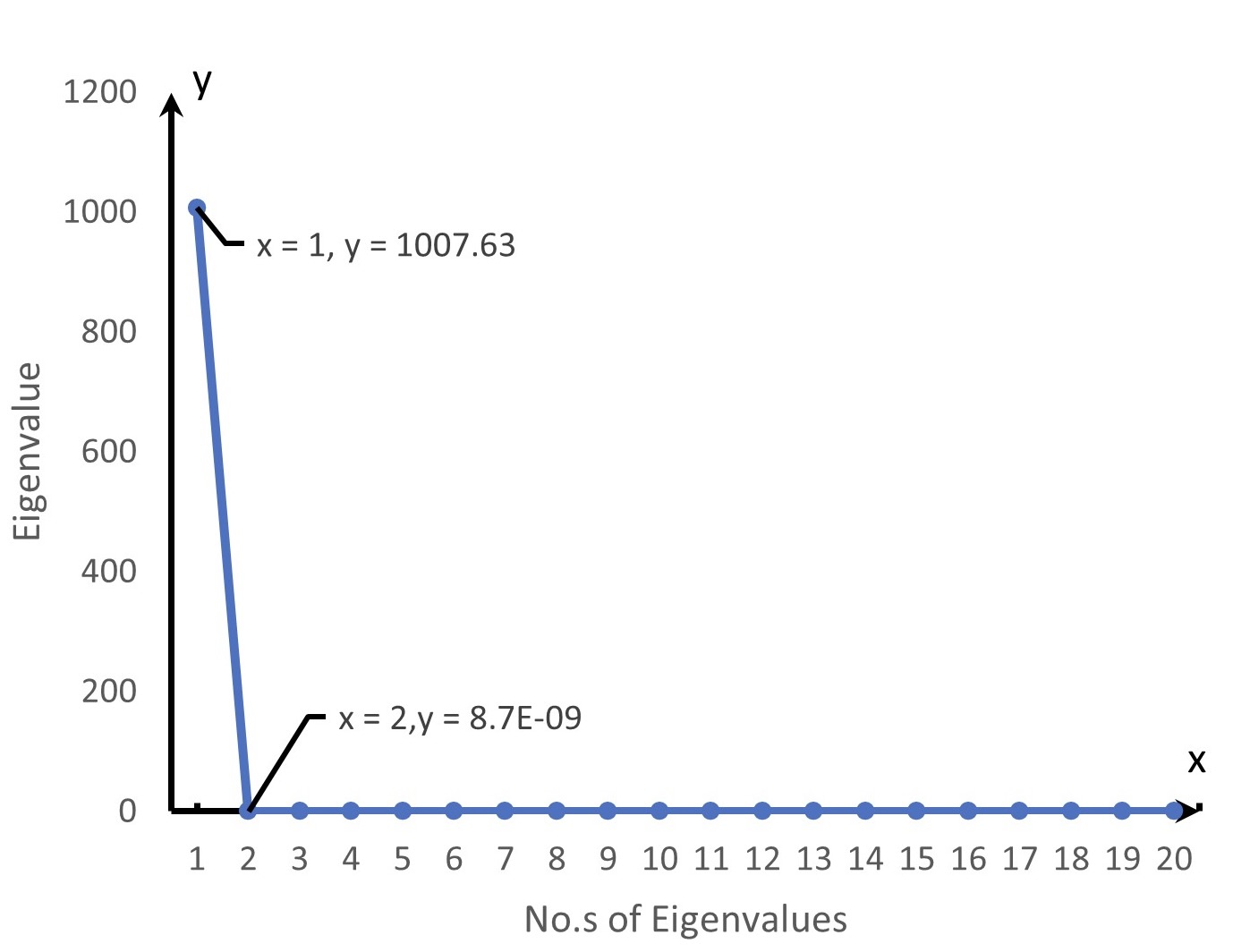}}\quad
    \subfigure[Relative error between high-fidelity solution and SEAM solution]{\includegraphics[width=5cm, height =4cm]{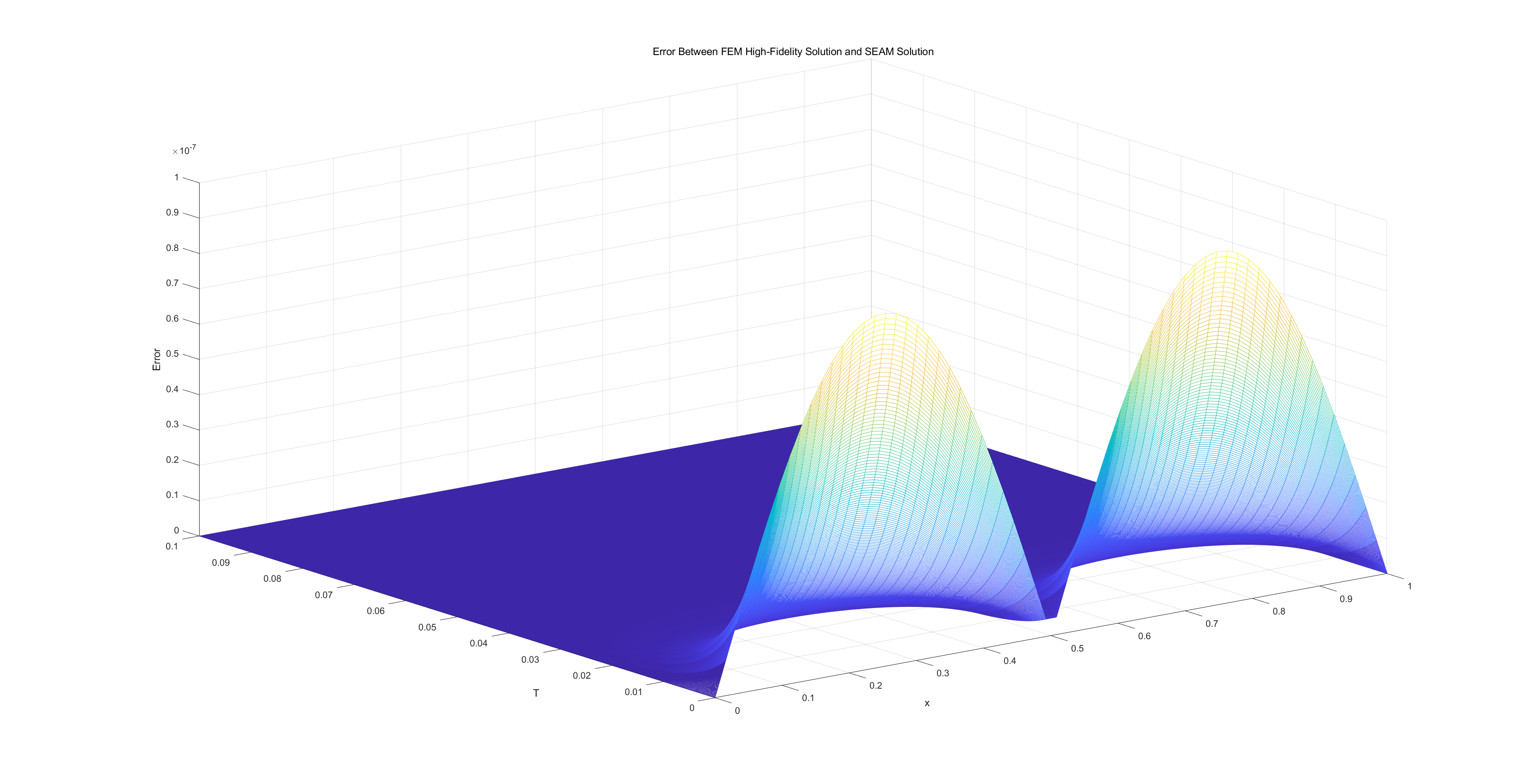}}
    \caption{ The first 20   eigenvalues of $U^\top U$ and relative error between  SEAM solution and high-fidelity solution   of (\ref{nml1}) }\label{fig2}
\end{figure}

\begin{table}[H]\label{tab1}
    \centering
    \begin{tabular}{lr|lr}
    \hline
    \multicolumn{2}{l}{High-fidelity model}  &\multicolumn{2}{l}{SEAM}\\
    \hline
    \multirow{2}{*}{\makecell[l]{Number of FEM d.o.fs \\ (each time step)}}& \multirow{2}{*}{ 98}     &   \multirow{2}{*}{\makecell[l]{Number of SEAM d.o.fs  \\ (each time step)}} & \multirow{2}{*}{1}\\
    &&&\\
     \hline
             &      &     d.o.fs reduction & 98:1\\ 
     \hline
    FE solution time &   1.73s   &     SEAM solution time  &  762ms\\
    \hline
    \multicolumn{2}{l}{SEAM-relative error in $L^{2}$-norm} &  \multicolumn{2}{l}{1.2e{-8}}\\
    \hline
    \end{tabular}
    \caption{Computational details for  the high-fidelity / SEAM solutions of (\ref{nml1}).}
     \label{1dcase1}
\end{table}

%
%
%

Based on the above observation from the  numerical experiment of 1d case,   we   investigate  SEAM   for d-dimensional  (d=2,3) second order parabolic problems   with  homogeneous Dirichlet boundary conditions. 
Under the assumption that the time step size is sufficiently small and  the time steps are not very large, we show 
%
    that the singular value distribution of the high-fidelity solution matrix  is very close to a reference matrix  of  rank one.
Then we select the eigenfunction associated to the principal eigenvalue as the POD basis  so as to obtain SEAM. 
 
%
%
 
The rest of the  paper is arranged as follows. Section 2   introduces the model problem and its full discretization. Section 3 gives the SEAM algorithm and its parallel version.  Section 4 discusses the singular value distribution of numerical solution matrix, 
 and Section 5 reports some numerical experiments.  Finally, Section 6 gives concluding remarks.

\section{Model problem and fully discrete scheme}
Let $D\subset \mathcal{R}^{d}$ ($d = 1, 2,3$)  be an open, bounded domain, and set $D_{T}:=D \times(0, T]$ for constant $T>0$.  We consider the following second-order parabolic problem: find $u=u(x,t)$ such that
\begin{equation}\label{modelproblem}
\begin{cases}
u_{t}- \nabla \cdot (\alpha(x)\nabla u)+c(x) u=f, & \text { in } D_{T}, \\ u=u_{0}, & \text { on } D \times\{0\}, \\ u=0, & \text { on } \partial D \times[0, T],
\end{cases}
\end{equation}
where $\alpha(x) = \left(\alpha_{k,j}(x)\right)_{d\times d}$ with $\alpha_{k,j}(x) = \alpha_{j,k}(x)  \in W^{1,\infty}(D)$,  $c(x) \in L^{\infty}(D)$,  and $f=f(x, t): D_{T} \rightarrow \mathcal{R}$ and $  u_{0}(x): D \rightarrow \mathcal{R}$ are given functions. 

Let   $\mathcal{T}_h=\bigcup\{T\}$ be a conforming shape-regular simplicial decomposition of the domain $D$, and let   $\mathbb{V}_h \subset H_0^1(D)$ be a space of standard finite elements of   arbitrary order  with respect to $\mathcal{T}_h$.  Denote by 
$ \left\{ x_{k}: \   k=1, 2, \cdots, M\right\} $
the set of  all  interior nodes    of $\mathcal{T}_h$, and by $\{\varphi_{k}(x):  \ k=1, 2, \cdots, M\}$   the nodal basis of   $\mathbb{V}_h$ with
$$     \varphi_{k}(x_j)=\delta_{kj} \quad  \text{for } k,j=1, 2, \cdots, M.$$
 Then we have 
$$
\mathbb{V}_h=\text{span} \left\{\varphi_{k}: \  k=1, \cdots, M\right\}.
$$

We divide the time interval $(0, T)$ into a uniform grid with nodes $t_n=n \tau$ and step size $\tau =T / N$ for $n = 0, 1, \ldots, N$. For convenience we set   $g^n:=g(t_n)$ for any   function $g(t)$. 

Let   $\pi_hu_{0}  \in \mathbb{V}_{h}$ be the piecewise   interpolation  of $u_{0}$. We consider the following fully discrete scheme for \eqref{modelproblem}: 

For $n=1,2, ..., N$, find $u_{n,h} \in \mathbb{V}_{h}$ such that
\begin{equation}\label{femscheme}
   \frac{1}{\tau}\left(u_{n,h}-u_{n-1,h}, v\right)_D+a \left(u_{n,h},   v\right)
   =\left( f_{n}, v\right)_D, \quad \forall v \in \mathbb{V}_{h},  
\end{equation}
with initial data $u_{0,h}  =\pi_h u^{0}$. Here  $(\cdot, \cdot)_D$ denotes the $L^2$-inner product on $D$, and  
$a \left(u,   v\right):=\left(\alpha \nabla u, \nabla v\right)_D + \left(c u_{n,h},v\right)_D $. 

Introduce   the numerical solution vector  $U_{n}$, the mass matrix  $\mathcal{M}$, the total stiffness matrix $\mathcal{S}$, and the load vector $F$, defined respectively by
\begin{equation*}\label{solution vector}
    U_{n} := \left(u_{n,h}(x_1), \cdots, u_{n,h}(x_M)\right)^\top, \quad n=0,1, \cdots, N, 
\end{equation*}
$$ 
\mathcal{M} := \left((\varphi_{k}, \varphi_{j})_D\right)_{M\times M},  \quad \mathcal{S}:=\left(a(\varphi_{k}, \varphi_{j})\right)_{M\times M}, \quad F:=\left(\left(f, \varphi_1\right), \cdots, \left(f, \varphi_M\right)\right)^{\top}.
$$
%
%
 Notice that   $\mathcal{M}$ and   $\mathcal{S}  $ are  both symmetric and positive definite.  
 Since   
$ 
    u_{n,h}(x)=  \sum\limits_{k=1}^{M} u_{n,h}(x_k)\varphi_{k}(x),
$ 
for a given $U_{0}$ the system (\ref{femscheme}) is of the  matrix form 
\begin{equation}\label{unwithfinq}
     (\mathcal{M}+\tau \mathcal{S}) U_{n}=  \mathcal{M}  U_{n-1} +  \tau F.
\end{equation}



\section{Single-Eigenvalue Acceleration Method}
In this section, we   give the SEAM algorithm, based on POD,  and a parallel version. 
Let $U_n$ be the high-fidelity  numerical solution vector defined by (\ref{unwithfinq}) for each $n$, and let \begin{equation}\label{fsu}
U:=\left(U_{0}, U_{1},\cdots, U_{n}\right),\quad n=0,1,\cdots,N,
\end{equation} 
be the numerical   solution matrix of (\ref{modelproblem}).  Denote
\begin{equation}\label{UTU}
X:=U^\top U,
\end{equation}
 and let  
$$  \lambda_0(X)\geq \lambda_{1}(X)\geq \cdots \geq   \lambda_{n}(X)\geq 0$$ be the eigenvalues of $X$, then the singular values of $U$ are 
$$\sqrt{\lambda_0(X)}, \sqrt{\lambda_1(X)}, \cdots, \sqrt{\lambda_{n}(X)}.$$  
 
 Set
$$
\widehat{\mathcal{U}}:=\operatorname{span}\left\{ U_{0}, U_{1}, \cdots, U_{n}\right\}, \quad \hat d := \operatorname{dim} \widehat{\mathcal{U}}.
$$
Without loss of generality, we assume that $1\leq \hat d <<M$.
 The POD method is to find the  standard orthogonal basis functions $\beta_{j}$ $(j=1,2, \cdots, p )$ of $ \widehat{\mathcal{U}}$
 such 
that for any $p $, $1 \leq$ $p \leq \hat d$, the mean square error
between $U_n$  $(n=0,1,\cdots)$   and its orthogonal projection onto the space $\text{span} \{\beta_1, \cdots, \beta_p\}$  is minimized on average, i.e.
 \begin{equation}\label{26}
    \min _{\left\{\beta_{j}\right\}_{j=1}^{p} \subset \widehat{\mathcal{U}}}
    \sum_{k=0}^{n}\left\|{U}_{k}-\sum_{j=1}^{p}\langle{U}_{k}, \beta_{j}\rangle \beta_{j}\right\|_{2}^{2}.
\end{equation}
 A solution $\left\{\beta_{j}\right\}_{j = 1}^{p}$ to (\ref{26}) is called a POD-basis of rank $p$. 

We will show later (cf. Theorem \ref{SEAMinhparabolicequation} and Corollary  \ref{corr1})  that the non-zero eigenvalues of $X$ except for the principal eigenvalue are close to zero  if   $n$ is not very large and the  time step size $\tau$ is small enough.      Based upon the   theoretical result,  we choose $p=1$ in the POD method \eqref{26}. This means that we only need to use the biggest eigenvalue $\lambda_0(X)$   and the associated eigenvector $\mathbf{b}_0=(b_{0},b_{1}, \cdots, b_{n})^{T}$  to obtain the rank 1 POD basis (cf. \cite{sirovich_1987_turbulence1})
$$
    \beta_{1}=\frac{1}{\sqrt{ \lambda_{0}}} \sum_{k=0}^{n} b_{k} {U}_{k}.
$$

 For convenience,  we name the POD method with $p=1$ as   Single Eigenvalue Acceleration Method (SEAM), and the corresponding SEAM scheme for the full discretization \eqref{unwithfinq} is as follows: 
 
 Given $ \alpha_0=\beta_1^\top U_0\in \Re,$ for  $k = 1, 2, \cdots, n,$ the SEAM approximation solution $U^{seam}_k$ at $k$-th time step is obtained by 
   \begin{equation}\label{RB-SEAM}
\left\{\begin{array}{l}
\beta_1^\top  (\mathcal{M}+\tau \mathcal{S}) \beta_1 \alpha_{k} =\beta_1^\top  \mathcal{M} \beta_1 \alpha _{k-1} + \tau  \beta_1^\top F, \\
U^{seam}_k= \alpha_{k}\beta_1.
\end{array}\right.
\end{equation}
Note that  the   coefficients $\beta_1^\top  (\mathcal{M}+\tau \mathcal{S}) \beta_1$ and $\beta_1^\top  \mathcal{M} \beta_1$  in \eqref{RB-SEAM}  are   positive  constants. 

By Theorem \ref{SEAMinhparabolicequation}   the model-order-reduction error of SEAM \eqref{RB-SEAM} is (cf. \cite{alfioquarteroni_2016_reduced})
$$
       \sum_{k=0}^{n}\left\|{U}_{k}-U^{seam}_k\right\|_{2}^{2}=\sum_{k=1}^{n} \lambda_k(X) \leq   C n^2\tau,
$$
where $C$ is a   positive constant  independent of $n$ and $\tau$.  
This means that the SEAM algorithm  may be inaccurate when $n^2\tau$  is not   small enough. In other words, we can not expect the desired accuracy if $n$ is large.   
To overcome such a weakness,  we will give a  parallel SEAM below (cf. Remark \ref{remark2}). 

For simplicity of notation,     we   assume    $N+1=(\tilde{n}+1)(n+1) $. 
We divide the total numerical solution matrix as 
$$\left(U_{0}, U_{1},\cdots, U_{N}\right)=\left(\mathfrak{U}_{0}, \mathfrak{U}_{1},\cdots, \mathfrak{U}_{\tilde{n}}\right), $$
with  the submatrices 
\begin{equation}\label{U_i}
\mathfrak{U}_{k} :=(U_{k(n+1)},U_{k(n+1)+1},\cdots,U_{k(n+1) + n}), \quad k =0, 1,  \cdots, \tilde{n}.
\end{equation}
 Then the   parallel SEAM can be described as following:
\begin{breakablealgorithm}
 \caption{Parallel SEAM}\label{seamalginhpe}
\begin{algorithmic}[1]
\For{$k = 0:\tilde{n}$}
     \State $\beta_{k} \gets POD(\mathfrak{U}_{k})$
\EndFor
\State $\alpha_{k,0}=\beta_k^\top U_{k(n+1)}, \quad   k = 0,1, \cdots,\tilde{n}$
\For{$j = 1:n$}
    \State $\beta_{k}^\top  (\mathcal{M}+\tau \mathcal{S}) \beta_{k} \alpha_{k,j} = \beta_k^\top  \mathcal{M} \beta_k \alpha _{k,j-1} + \tau  \beta_k^\top F, \quad   k = 0,1, \cdots,\tilde{n}$
\EndFor
\State \textbf{output } 
$\mathfrak{U}^{seam}_{k}=  \beta_{k} \left(\alpha_{k,0},\alpha_{k,1},\cdots, \alpha_{k,n}\right), \quad  k = 0,1, \cdots,\tilde{n}.$
\end{algorithmic}
\end{breakablealgorithm}

\section{Singular Value Distribution of Numerical Solution Matrix }

In this section, we   apply a perturbation technique to  investigate the singular value distribution of the numerical solution matrix $U$ in (\ref{fsu}). 
 We need the following  result due to Hoffman and Wielandt  \cite{doi:10.1142/9789812796936_0011}:
\begin{lemma}\label{whlemma}
If $A$ and $E$ are two $(n+1) \times (n+1)$ symmetric matrices, then there hold
$$
\sum_{k=0}^{n}\left(\lambda_{k}(A+E)-\lambda_{k}(A)\right)^{2} \leq\|E\|_{\mathfrak{F}}^{2} 
$$
and 
$$
\lambda_{n}(E) \leq \lambda_{k}(A+E)-\lambda_{k}(A) \leq \lambda_{0}(E) , \quad 0\leq k\leq n,
$$
where $\lambda_n(E)\leq \lambda_{n-1}(E)\leq \cdots \leq  \lambda_0(E)$ are the eigenvalues of $E$, 
and    $\|E\|_{\mathfrak{F}}
=\sqrt{\sum\limits_{k=0}^{n} \sum\limits_{j=0}^{n} E_{k j}^{2}}$ is   the Frobenius norm of $E$. 
\end{lemma}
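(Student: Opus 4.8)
The plan is to prove the two assertions separately, starting with the simpler Weyl-type bound and then turning to the Hoffman--Wielandt trace estimate.

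For the pointwise inequality $\lambda_n(E)\le \lambda_k(A+E)-\lambda_k(A)\le \lambda_0(E)$, I would invoke the Courant--Fischer min--max characterization: for a symmetric $(n+1)\times(n+1)$ matrix $B$ one has $\lambda_k(B)=\max_{\dim V=k+1}\ \min_{0\neq x\in V}\frac{x^\top B x}{x^\top x}$ when the eigenvalues are listed in decreasing order. Writing $B=A+E$ and using $x^\top E x\ge \lambda_n(E)\,x^\top x$ for every $x$, the inner minimum over any subspace $V$ is bounded below by $\min_{0\neq x\in V}\frac{x^\top A x}{x^\top x}+\lambda_n(E)$; taking the maximum over all $(k+1)$-dimensional $V$ yields $\lambda_k(A+E)\ge\lambda_k(A)+\lambda_n(E)$. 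The upper bound follows symmetrically from $x^\top E x\le \lambda_0(E)\,x^\top x$, or by applying the lower bound to $-E$. This step is routine.

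For the Frobenius-norm inequality $\sum_{k=0}^n(\lambda_k(A+E)-\lambda_k(A))^2\le\|E\|_{\mathfrak F}^2$, I would take spectral decompositions $A=P\Lambda_A P^\top$ and $A+E=Q\Lambda_B Q^\top$ with $P,Q$ orthogonal and $\Lambda_A,\Lambda_B$ the diagonal matrices of eigenvalues in decreasing order. Then $E=Q\Lambda_B Q^\top-P\Lambda_A P^\top$, and orthogonal invariance of the Frobenius norm gives $\|E\|_{\mathfrak F}^2=\|\Lambda_B-W\Lambda_A W^\top\|_{\mathfrak F}^2$ with $W:=Q^\top P$ orthogonal. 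Expanding, $\|E\|_{\mathfrak F}^2=\|\Lambda_A\|_{\mathfrak F}^2+\|\Lambda_B\|_{\mathfrak F}^2-2\operatorname{tr}(\Lambda_B W\Lambda_A W^\top)$, while the target sum equals $\|\Lambda_A\|_{\mathfrak F}^2+\|\Lambda_B\|_{\mathfrak F}^2-2\sum_k\lambda_k(A+E)\lambda_k(A)$. Hence the claim reduces to the trace inequality $\operatorname{tr}(\Lambda_B W\Lambda_A W^\top)=\sum_{i,j}\lambda_i(A+E)\,\lambda_j(A)\,W_{ij}^2\le\sum_k\lambda_k(A+E)\lambda_k(A)$.

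The heart of the argument, and the step I expect to be the main obstacle, is this last maximization. I would observe that the matrix $S$ with entries $S_{ij}=W_{ij}^2$ is doubly stochastic, since $W$ is orthogonal and hence has rows and columns of unit length. By the Birkhoff--von Neumann theorem the set of doubly stochastic matrices is the convex hull of permutation matrices, so the linear functional $S\mapsto\sum_{i,j}\lambda_i(A+E)\lambda_j(A)S_{ij}$ attains its maximum at some permutation matrix $S=P_\sigma$, giving $\sum_i\lambda_i(A+E)\lambda_{\sigma(i)}(A)$; because both eigenvalue sequences are arranged in decreasing order, the rearrangement inequality shows this is maximized by $\sigma=\mathrm{id}$, i.e. by $S=I$, which produces exactly $\sum_k\lambda_k(A+E)\lambda_k(A)$ and closes the proof. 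The only care needed is in verifying double stochasticity and in keeping the eigenvalue lists in the same (decreasing) order, so that the identity permutation, rather than the reversal, is the maximizer.
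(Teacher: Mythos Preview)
Your argument is correct on both counts. The Weyl-type bound via Courant--Fischer is standard and your execution is clean; the Hoffman--Wielandt inequality via the doubly-stochastic matrix $S_{ij}=W_{ij}^2$, Birkhoff--von~Neumann, and the rearrangement inequality is exactly the classical route and is carried out without gaps.

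By way of comparison: the paper does not actually prove this lemma. It is stated as a quotation of the original Hoffman--Wielandt result (together with Weyl's inequality) and simply cited, so there is no ``paper's own proof'' to match against. Your write-up therefore supplies strictly more than the paper does. If anything, you might note that the second assertion (the two-sided bound by $\lambda_0(E)$ and $\lambda_n(E)$) is Weyl's inequality rather than Hoffman--Wielandt proper, so the lemma as stated bundles two distinct classical facts; your proposal already treats them separately, which is the right organization.
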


 Let us turn back to the matrix equation \eqref{unwithfinq}. Since 
$$\mathcal{M}+\tau \mathcal{S}=\mathcal{M}^{1/2}(I+\tau\mathcal{M}^{-1/2}\mathcal{S} \mathcal{M}^{-1/2})\mathcal{M}^{1/2},$$
we   rewrite \eqref{unwithfinq} as 
\begin{equation}\label{unwithfinq0}
     U_{n}=
     \mathcal{M}^{-1/2} \left(I + \tau\bm{\mathcal{A}}\right)^{-1} \mathcal{M}^{1/2} U_{n-1} +  \tau\mathcal{M}^{-1/2} \left(I + \tau\bm{\mathcal{A}}\right)^{-1} \mathcal{M}^{-1/2}  F
\end{equation}
with $\bm{\mathcal{A}} = \mathcal{M}^{-1/2}\mathcal{S} \mathcal{M}^{-1/2}$, for a given $U_0$ and  $n=1,2, ..., N$.  

In what follows we  assume that the time step $\tau \in (0,1)$ is small enough such that 
\begin{equation}\label{time-step}
   \max\{ \tau\|\bm{\mathcal{A}}\|_2, \|I - \tau  \bm{\mathcal{A}}\|_2\}<1.
 \end{equation}
By noticing that $\bm{\mathcal{A}} $ is SPD, this assumption is reasonable. 

  Let $\rho(\cdot) $ represent  the spectral radius of a matrix. As \eqref{time-step} implies $\rho(\tau  \bm{\mathcal{A}}) < 1$, we have 
\begin{equation}\label{Matrixseries}
\left(I + \tau  \bm{\mathcal{A}}\right)^{-1} = \sum_{k = 0}^{\infty}(-1)^{k}(\tau  \bm{\mathcal{A}})^{k} = I - \tau  \bm{\mathcal{A}}    + O\left(\tau^{2}\right).
\end{equation}

In the   case of $F = 0$,    (\ref{unwithfinq0}) is  reduced to 
\begin{equation}\label{Un-f=0}
\left\{\begin{array}{l}
\tilde U_0=U_0,\\
\widetilde U_{n}  
 =  \mathcal{M}^{-1/2} \left(I + \tau\bm{\mathcal{A}}\right)^{-1} \mathcal{M}^{1/2} \widetilde U_{n-1}, \quad   1 \leq n\leq N.
 \end{array}
 \right.
\end{equation}
Inspired by \eqref{Matrixseries}, 
for $n=0,1,2,\cdots$ we define $U_{n}^{*}$ by
\begin{equation}\label{uapp1}
\left\{\begin{array}{l}
U_{0}^{*}=U_{0},\\
U_{n}^{*} = \mathcal{M}^{-1/2}\left(I - \tau  \bm{\mathcal{A}} \right)\mathcal{M}^{1/2} U_{*}^{n-1}=\mathcal{M}^{-1/2}\left(I - \tau  \bm{\mathcal{A}} \right)^n\mathcal{M}^{1/2}U_{0}^{*},
\end{array}
 \right.
 \end{equation}
and define  $\bar{U}_{n}^{*}$  by 
\begin{equation}\label{uapp2}
\left\{\begin{array}{l}
\bar{U}_{0}^{*}=U_{0},\\
\bar{U}_{n}^{*} =\mathcal{M}^{-1/2}\left(I - \tau  \|\bm{\mathcal{A}}\|_{2} I \right)\mathcal{M}^{1/2} \bar{U}_{*}^{n-1}= \left(1 - \tau \|\bm{\mathcal{A}}\|_{2}   \right)^n\bar{U}_{0}^{*}. 
\end{array}
 \right.
 \end{equation}
Denote 
$$
\widetilde U := (\widetilde U_{0},\widetilde U_{1},\cdots, \widetilde U_{n}), \quad U_{*} := \left(  U_0^{*}, {U}_{1}^{*},  \cdots,  {U}_{n}^{*}\right),
\quad \bar{U}_{*} := \left(\bar{U}_{0}^{*},\bar{U}_{1}^{*}, ,\cdots, \bar{U}_{n}^{*}\right) ,
 $$ 
 $$
\widetilde{X}: = \widetilde{U}^{\top}\widetilde{U}, \quad  \bar{X}_{*} := \bar{U}_{*}^{\top}\bar{U}_{*}.
$$
\begin{remark}\label{remk4.1}
We easily see that the rank of  $\bar{U}_{*}$ is at most 1, which implies that  $\bar{U}_{*}$ has at most one non-zero singular value, and thus  $\bar{X}_{*}$ has at most one nonzero eigenvalue. 
\end{remark}

In the sequel   we  use $a \lesssim b$ to denote  $a \le C  b$ for simplicity, where  $C$ is a generic  positive constant  independent of $n$ and $\tau$ and may   be  different at its each occurrence.

The following lemma shows that $\bar{X}_{*}$ can be viewed as a perturbation of 
$\widetilde{X}$ in some sense.

\begin{lemma}\label{SEAMsecondorderparabolicequation}
Assume that 
$\tau$ is small enough such that   \eqref{time-step} holds, then   we have
\begin{equation}\label{eigen-X}
\begin{aligned}
\|\widetilde X-\bar{X}_{*}\|_{\mathfrak{F}}^{2} 
\lesssim   n^4\tau^2.
\end{aligned}
\end{equation} 
\end{lemma}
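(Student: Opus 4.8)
The plan is to control $\|\widetilde X - \bar X_*\|_{\mathfrak F}$ by inserting the intermediate matrix $U_*$, writing
\[
\|\widetilde X - \bar X_*\|_{\mathfrak F} \le \|\widetilde U^\top \widetilde U - U_*^\top U_*\|_{\mathfrak F} + \|U_*^\top U_* - \bar U_*^\top \bar U_*\|_{\mathfrak F},
\]
and estimating each difference of Gram matrices via the elementary identity $A^\top A - B^\top B = A^\top(A-B) + (A-B)^\top B$, so that
\[
\|A^\top A - B^\top B\|_{\mathfrak F} \le \big(\|A\|_2 + \|B\|_2\big)\,\|A-B\|_{\mathfrak F}.
\]
Thus the whole estimate reduces to (i) uniform-in-$n$ operator-norm bounds on the columns of $\widetilde U$, $U_*$, $\bar U_*$, and (ii) column-wise bounds on $\widetilde U_n - U_n^*$ and on $U_n^* - \bar U_n^*$, summed over $n=0,\dots,n$. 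Since there are $n+1$ columns, a per-column bound of size $O(n\tau)$ on the differences and $O(1)$ on the norms will yield $\|\cdot\|_{\mathfrak F}^2 \lesssim (n+1)\cdot(n\tau)^2 \lesssim n^4\tau^2$ — actually even $n^3\tau^2$, but $n^4\tau^2$ certainly follows.

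First I would record the basic bounds. From \eqref{time-step} we have $\|I - \tau\bm{\mathcal A}\|_2 < 1$ and $\tau\|\bm{\mathcal A}\|_2 < 1$, and the exact propagator satisfies $\|(I+\tau\bm{\mathcal A})^{-1}\|_2 = (1 + \tau\lambda_{\min}(\bm{\mathcal A}))^{-1} \le 1$ since $\bm{\mathcal A}$ is SPD. Consequently, writing $\kappa := \|\mathcal M^{1/2}\|_2\,\|\mathcal M^{-1/2}\|_2$ (a fixed constant depending only on $\mathcal M$), each of $\|\widetilde U_n\|_2$, $\|U_n^*\|_2$, $\|\bar U_n^*\|_2$ is bounded by $\kappa\|U_0\|_2$ uniformly in $n$; hence $\|\widetilde U\|_2, \|U_*\|_2, \|\bar U_*\|_2 \lesssim \sqrt{n+1}$, or even just $\lesssim \sqrt{n}$ up to absorbing constants — in any case the factor $(\|A\|_2 + \|B\|_2)$ contributes at most $O(\sqrt n)$, which is harmless for the target power.

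Next, the two column-wise difference estimates. For $U_n^* - \bar U_n^*$: conjugating by $\mathcal M^{1/2}$, this is controlled by $\|(I-\tau\bm{\mathcal A})^n - (1-\tau\|\bm{\mathcal A}\|_2)^n I\|_2\,\|U_0\|_2$ up to the fixed constant $\kappa$. Diagonalizing $\bm{\mathcal A}$, the spectral function is $\max_{\mu \in \mathrm{spec}(\bm{\mathcal A})} |(1-\tau\mu)^n - (1-\tau\|\bm{\mathcal A}\|_2)^n|$; using $|a^n - b^n| \le n\max(|a|,|b|)^{n-1}|a-b|$ with $|a|,|b| < 1$ gives a bound $\le n\,\tau\,|\mu - \|\bm{\mathcal A}\|_2| \le n\tau\|\bm{\mathcal A}\|_2 \lesssim n\tau$. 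For $\widetilde U_n - U_n^*$: both are generated by one-step recursions with the \emph{same} initial value $U_0$, so by a standard discrete telescoping/Gronwall argument the error accumulates as a sum of one-step consistency errors propagated by operators of norm $\le 1$ (after $\mathcal M^{1/2}$-conjugation). The one-step consistency error is $\|(I+\tau\bm{\mathcal A})^{-1} - (I-\tau\bm{\mathcal A})\|_2 = \max_\mu \left|\frac{1}{1+\tau\mu} - (1-\tau\mu)\right| = \max_\mu \frac{\tau^2\mu^2}{1+\tau\mu} \lesssim \tau^2$ (using $\tau\|\bm{\mathcal A}\|_2 < 1$), so after $n$ steps $\|\widetilde U_n - U_n^*\|_2 \lesssim n\tau^2 \le n\tau$. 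Combining, each column of $\widetilde U - \bar U_*$ has norm $\lesssim n\tau$, so $\|\widetilde U - \bar U_*\|_{\mathfrak F}^2 \lesssim (n+1)(n\tau)^2 \lesssim n^4\tau^2$, and then the Gram-matrix identity with the $O(\sqrt n)$ operator-norm factor gives $\|\widetilde X - \bar X_*\|_{\mathfrak F}^2 \lesssim n\cdot n^4\tau^2$ — here one should be a little more careful and instead bound $\|\widetilde X - \bar X_*\|_{\mathfrak F} \le (\|\widetilde U\|_2 + \|\bar U_*\|_2)\|\widetilde U - \bar U_*\|_{\mathfrak F} \lesssim \sqrt n \cdot \sqrt n\, (n\tau) = n^2\tau$, i.e. squaring gives exactly $n^4\tau^2$.

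The main obstacle is bookkeeping the $n$-dependence cleanly: one must avoid picking up extra powers of $n$ by being careful about where the $\sqrt{n+1}$ from "$n+1$ columns" enters versus the $n$ from "$n$ time steps of accumulation", and by using that the exact propagator $(I+\tau\bm{\mathcal A})^{-1}$ and its approximation $I-\tau\bm{\mathcal A}$ are both contractions in the $\mathcal M^{1/2}$-conjugated norm so that error propagation is stable (no exponential blow-up). The telescoping estimate $\widetilde U_n - U_n^* = \sum_{j=1}^{n} P^{\,n-j}(P - Q)Q^{\,j-1}U_0$ with $P,Q$ the two conjugated one-step maps, together with $\|P\|_2,\|Q\|_2 \le 1$ and $\|P-Q\|_2 \lesssim \tau^2$, is the crux; everything else is the routine Gram-matrix perturbation lemma.
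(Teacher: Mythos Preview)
Your proposal is correct and follows essentially the same route as the paper: both insert the intermediate $U_*$, split $\widetilde X-\bar X_*$ into two Gram-matrix differences, and bound these via column-wise estimates $\|\widetilde U_k-U_k^*\|_2\lesssim k\tau^2$ (from the $O(\tau^2)$ one-step consistency error, which the paper gets by Neumann expansion and you get by spectral calculus plus telescoping) and $\|U_k^*-\bar U_k^*\|_2\lesssim k\tau$ (from $|a^k-b^k|\le k|a-b|$ on the spectrum). The only cosmetic differences are that the paper uses the three-term expansion $A^\top A-B^\top B=B^\top(A-B)+(A-B)^\top B+(A-B)^\top(A-B)$ with pure Frobenius bounds $\|A\|_{\mathfrak F}\|B\|_{\mathfrak F}$, whereas you use the two-term identity with the sharper mixed bound $\|A\|_2\|B\|_{\mathfrak F}$; both arrive at the same $n^4\tau^2$.
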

 
 \begin{proof}
From  \eqref{Un-f=0}-\eqref{uapp2} we   get
\begin{eqnarray*}\label{T_*}
\widetilde T_*:=\widetilde U - U_{*} = \left(\bm{0},\tilde \theta_{1}U_{0},\tilde\theta_{2}U_{0}\cdots,\tilde\theta_{n}U_{0}\right),\\
\bar T_*:=U_{*} - \bar{U}_{*} = \left(\bm{0},\bar{\theta}_{1}U_{0},\bar{\theta}_{2}U_{0},\cdots,\bar{\theta}_{n}U_{0}\right),\label{barT_*}
\end{eqnarray*}
where \begin{align*}
\tilde\theta_{n} &:=\mathcal{M}^{-1/2}\left[\left(I - \tau  \bm{\mathcal{A}}    +O\left(\tau^{2}\right) \right)^n-\left(I - \tau  \bm{\mathcal{A}}  \right)^n\right]\mathcal{M}^{1/2}\\
&=
\mathcal{M}^{-1/2}\left[\sum\limits_{k = 1}^{n}\binom{n}{k} \left(I - \tau  \bm{\mathcal{A}}  \right)^{n-k} O\left(\tau^{2k}\right)\right]\mathcal{M}^{1/2},\\
 \bar{\theta}_{n} &:=\mathcal{M}^{-1/2}\left[\left(I - \tau  \bm{\mathcal{A}} \right)^n
- \left(I - \tau \|\bm{\mathcal{A}}\|_{2} I   \right)^n\right]\mathcal{M}^{1/2}\\
&=\tau \mathcal{M}^{-1/2}\left[ \left(\|\bm{\mathcal{A}}\|_{2} I-\bm{\mathcal{A}}\right)\sum\limits_{k = 1}^{n}  \left(1 - \tau\|\bm{\mathcal{A}}\|_{2}     \right)^{k-1}\left(I - \tau\bm{\mathcal{A}}\right)^{n-k} \right]\mathcal{M}^{1/2} . 
\end{align*}
Thus, 
\begin{eqnarray}\label{E*bar}
\widetilde X -\bar{X}_{*}&=&(\widetilde U^{\top}\widetilde U -U_{*}^{\top}U_{*})+(U_{*}^{\top}U_{*} -\bar{U}_{*}^{\top}\bar{U}_{*}) \nonumber \\
&=&\left((U_{*} + \widetilde T_{*})^{\top}(U_{*} + \widetilde T_{*}) -U_{*}^{\top}U_{*}\right)+\left((\bar{U}_{*} + \bar{T}_{*})^{\top}(\bar{U}_{*} + \bar{T}_{*})-\bar{U}_{*}^{\top}\bar{U}_{*}\right)
\nonumber\\
&= & (U_{*}^{\top}\widetilde T_{*} + \widetilde T_{*}^{\top}U_{*} + \widetilde T_{*}^{\top}\widetilde T_{*})+( \bar{U}_{*}^{\top}\bar{T}_{*} + \bar{T}_{*}^{\top}\bar{U}_{*} + \bar{T}_{*}^{\top}\bar{T}_{*})\nonumber\\
&=:& E_*+\bar{E}_{*}.
\end{eqnarray}
From the assumption \eqref{time-step} and the definitions of $U_{*}$,  $\bar{U}_{*}$,  $\widetilde T_{*}$ and  $\bar{T}_{*}$,  we have the following estimates:
\begin{eqnarray*}
\|U_{*}\|_{\mathfrak{F}}^{2} &\le& \|\mathcal{M}^{-1/2}\|_{\mathfrak{F}}^2\sum_{k = 0}^{n}\|I - \tau  \bm{\mathcal{A}}\|_{2}^{2k}\|\mathcal{M}^{1/2}\|_2^2\|U_{0}\|_{2}^{2} \\
&\le&(n+1) \|\mathcal{M}^{-1/2}\|_{\mathfrak{F}}^2 \|\mathcal{M}^{1/2}\|_2^2\|U_{0}\|_{2}^{2},\\
\|\bar{U}_{*}\|_{\mathfrak{F}}^{2} &\le& \sum_{k = 0}^{n}(1 - \tau \|\bm{\mathcal{A}}\|_{2} )^{2k}\|U_{0}\|_{2}^{2} \le  (n+1)
\|U_{0}\|_{2}^{2},\\
\|\widetilde T_{*}\|_{\mathfrak{F}}^{2} &\le& \|\mathcal{M}^{-1/2}\|_{\mathfrak{F}}^2   \sum_{k = 1}^{n} \left[\sum\limits_{j = 1}^{k}\binom{k}{j} \| I - \tau  \bm{\mathcal{A}} \|_2 ^{k-j} O\left(\tau^{2j}\right)\right]^2\|\mathcal{M}^{1/2}\|_2^2\|U_{0}\|_{2}^{2}\\
 &\le&\|\mathcal{M}^{-1/2}\|_{\mathfrak{F}}^2 \sum_{k = 1}^{n} \left[\sum\limits_{j = 1}^{k}\binom{k}{j}  O\left(\tau^{2j}\right)\right]^2\|\mathcal{M}^{1/2}\|_2^2\|U_{0}\|_{2}^{2}\\
 &\le&\|\mathcal{M}^{-1/2}\|_{\mathfrak{F}}^2 \sum_{k = 1}^{n} \left[\sum\limits_{j = 1}^{k} O\left(\tau^{j}\right)\right]^2\|\mathcal{M}^{1/2}\|_2^2\|U_{0}\|_{2}^{2}\\
&\lesssim&    n\tau^2 \|\mathcal{M}^{-1/2}\|_{\mathfrak{F}}^2\|\mathcal{M}^{1/2}\|_2^2 \|U_{0}\|_{2}^{2},
\end{eqnarray*}
and
\begin{equation*}
\begin{aligned}
\|\bar{T}_{*}\|_{\mathfrak{F}}^{2} & \leq \tau^2 \|\mathcal{M}^{-1/2}\|_{\mathfrak{F}}^2\|\|\bm{\mathcal{A}}\|_{2} I - \bm{\mathcal{A}}\|_{2}^{2}\sum_{k = 1}^{n}\left[\sum\limits_{j = 1}^{k}  \left(1 -\tau  \|\bm{\mathcal{A}}\|_{2}    \right)^{j-1}\|I - \tau\bm{\mathcal{A}}\|^{k-j} \right]^2\|\mathcal{M}^{1/2}\|_2^2 \|U_{0}\|_{2}^{2} 
\\
& \leq \tau^2 \|\mathcal{M}^{-1/2}\|_{\mathfrak{F}}^2\|\|\bm{\mathcal{A}}\|_{2} I - \bm{\mathcal{A}}\|_{2}^{2}(\sum_{k = 1}^{n}k^{2} )\|\mathcal{M}^{1/2}\|_2^2 \|U_{0}\|_{2}^{2}
\\
& \lesssim  n^3 \tau^2  \|\mathcal{M}^{-1/2}\|_{\mathfrak{F}}^2\|\|\bm{\mathcal{A}}\|_{2} I - \bm{\mathcal{A}}\|_{2}^{2} \|\mathcal{M}^{1/2}\|_2^2 \|U_{0}\|_{2}^{2},
\end{aligned}
\end{equation*}
 Consequently, we obtain
\begin{align*}
\|E_{*}\|_{\mathfrak{F}}^{2} & =\|U_{*}^{\top}\widetilde T_{*} + \widetilde T_{*}^{\top}U_{*} + \widetilde T_{*}^{\top}\widetilde T_{*}\|_{\mathfrak{F}}^2\\
& \leq  8\|U_{*}\|_{\mathfrak{F}}^{2}\|\widetilde T_{*}\|_{\mathfrak{F}}^{2}  + 2\|\widetilde T_{*}\|_{\mathfrak{F}}^{4} \\ 
& \lesssim  n^2\tau^2  \|\mathcal{M}^{-1/2}\|_{\mathfrak{F}}^4 \|\mathcal{M}^{1/2}\|_2^4\|U_{0}\|_{2}^{4}\\
& \lesssim  n^2\tau^2,\\
 \|\bar{E}_{*}\|_{\mathfrak{F}}^{2} &=\|\bar{U}_{*}^{\top}\bar{T}_{*} + \bar{T}_{*}^{\top}\bar{U}_{*} + \bar{T}_{*}^{\top}\bar{T}_{*}\|_{\mathfrak{F}}^2 \\
 &\le  8\|\bar{U}_{*}\|_{\mathfrak{F}}^{2}\|\bar{T}_{*}\|_{\mathfrak{F}}^{2}  +2 \|\bar{T}_{*}\|_{\mathfrak{F}}^{4} \\ 
& \lesssim \left(n^4\tau^2  \|U_{0}\|_{2}^{2}+n^6\tau^4\|\mathcal{M}^{-1/2}\|_{\mathfrak{F}}^2\|\|\bm{\mathcal{A}}\|_{2} I - \bm{\mathcal{A}}\|_{2}^{2} \|\mathcal{M}^{1/2}\|_2^2 \|U_{0}\|_{2}^{2}\right)\\
&\quad \cdot
  \|\mathcal{M}^{-1/2}\|_{\mathfrak{F}}^2\|\|\bm{\mathcal{A}}\|_{2} I - \bm{\mathcal{A}}\|_{2}^{2} \|\mathcal{M}^{1/2}\|_2^2 \|U_{0}\|_{2}^{2}\\
  & \lesssim  n^4\tau^2,
\end{align*}
 which, together with \eqref{E*bar}, yields
 $$
 \|\widetilde X-\bar{X}_{*}\|_{\mathfrak{F}}^{2}\leq \|E_{*}\|_{\mathfrak{F}}^{2}+\|\bar{E}_{*}\|_{\mathfrak{F}}^{2}\lesssim n^2\tau^2+n^4\tau^2\lesssim n^4\tau^2.
$$
\end{proof}


Based on Lemma \ref{SEAMsecondorderparabolicequation},  
 we  can get an error estimate   between $ {X}=U^\top U$ $\bar X= \bar{U}^{\top}\bar {U}$ and :

\begin{lemma}\label{SEAMsecondorderparabolicequation2}
Assume that 
$\tau$ is small enough such that   \eqref{time-step} holds, then   we have
\begin{equation}\label{eigen-X2}
\begin{aligned}
\|X-\bar{X}_{*} \|_{\mathfrak{F}}^{2} 
\lesssim   n^4\tau^2.
\end{aligned}
\end{equation} 
\end{lemma}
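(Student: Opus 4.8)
The plan is to reduce the statement to Lemma~\ref{SEAMsecondorderparabolicequation} plus a control of the perturbation caused by the load term. Writing $X-\bar X_{*}=(X-\widetilde X)+(\widetilde X-\bar X_{*})$ and using the triangle inequality for $\|\cdot\|_{\mathfrak F}$ together with $(a+b)^2\le 2a^2+2b^2$, it follows from \eqref{eigen-X} that it suffices to establish $\|X-\widetilde X\|_{\mathfrak F}^{2}\lesssim n^{4}\tau^{2}$. Recall that $U$ is the genuine solution matrix from \eqref{unwithfinq0} (with forcing) and $\widetilde U$ the forcing-free matrix from \eqref{Un-f=0}, so $X-\widetilde X$ measures exactly the effect of $F$.

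First I would set $R:=U-\widetilde U=(R_0,R_1,\dots,R_n)$ with $R_k:=U_k-\widetilde U_k$. Subtracting \eqref{Un-f=0} from \eqref{unwithfinq0} gives the recursion $R_k=\mathcal{M}^{-1/2}(I+\tau\bm{\mathcal{A}})^{-1}\mathcal{M}^{1/2}R_{k-1}+\tau\mathcal{M}^{-1/2}(I+\tau\bm{\mathcal{A}})^{-1}\mathcal{M}^{-1/2}F$ with $R_0=0$, whose unrolled form is
$$
R_{n}=\tau\sum_{j=1}^{n}\big(\mathcal{M}^{-1/2}(I+\tau\bm{\mathcal{A}})^{-1}\mathcal{M}^{1/2}\big)^{\,n-j}\,\mathcal{M}^{-1/2}(I+\tau\bm{\mathcal{A}})^{-1}\mathcal{M}^{-1/2}F.
$$
The key observation is that $\big(\mathcal{M}^{-1/2}(I+\tau\bm{\mathcal{A}})^{-1}\mathcal{M}^{1/2}\big)^{m}=\mathcal{M}^{-1/2}(I+\tau\bm{\mathcal{A}})^{-m}\mathcal{M}^{1/2}$, so the inner $\mathcal{M}^{\pm1/2}$ factors telescope and do not accumulate; since $\bm{\mathcal{A}}$ is SPD we have $\|(I+\tau\bm{\mathcal{A}})^{-m}\|_{2}\le 1$ for all $m\ge 0$, whence each summand has $2$-norm $\lesssim\tau\|F\|_{2}$ with a constant depending only on $\mathcal{M}$. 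Therefore $\|R_k\|_{2}\lesssim k\tau$, which yields $\|R\|_{\mathfrak F}^{2}=\sum_{k=0}^{n}\|R_k\|_{2}^{2}\lesssim\tau^{2}\sum_{k=1}^{n}k^{2}\lesssim n^{3}\tau^{2}$; and by the same spectral bound (as in the proof of Lemma~\ref{SEAMsecondorderparabolicequation}) $\|\widetilde U\|_{\mathfrak F}^{2}\lesssim n+1$.

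Finally I would expand $X-\widetilde X=U^{\top}U-\widetilde U^{\top}\widetilde U=\widetilde U^{\top}R+R^{\top}\widetilde U+R^{\top}R$ and use submultiplicativity of the Frobenius norm to get $\|X-\widetilde X\|_{\mathfrak F}\le 2\|\widetilde U\|_{\mathfrak F}\|R\|_{\mathfrak F}+\|R\|_{\mathfrak F}^{2}\lesssim n^{1/2}\cdot n^{3/2}\tau+n^{3}\tau^{2}=n^{2}\tau+n^{3}\tau^{2}$. Since $n\tau\le N\tau=T$, the second term is dominated by the first, so $\|X-\widetilde X\|_{\mathfrak F}^{2}\lesssim n^{4}\tau^{2}$, and combining with \eqref{eigen-X} gives \eqref{eigen-X2}. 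The only mild obstacle is the step above: one must notice that, although $\|\mathcal{M}^{\pm1/2}\|_{2}$ may exceed $1$, the conjugated propagator powers stay uniformly bounded because the $\mathcal{M}^{\pm1/2}$ factors cancel — a careless estimate would instead produce an unwanted exponentially growing factor $\|\mathcal{M}^{-1/2}\|_2^{n}\|\mathcal{M}^{1/2}\|_2^{n}$.
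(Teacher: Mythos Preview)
Your proof is correct and follows the paper's approach: split $X-\bar X_{*}$ through $\widetilde X$, invoke Lemma~\ref{SEAMsecondorderparabolicequation} for $\widetilde X-\bar X_{*}$, and control $X-\widetilde X=\widetilde U^{\top}R+R^{\top}\widetilde U+R^{\top}R$ via the unrolled load contribution $R_{n}=\tau\,\mathcal{M}^{-1/2}\sum_{k=1}^{n}(I+\tau\bm{\mathcal{A}})^{-k}\mathcal{M}^{-1/2}F$, exactly as in the paper's \eqref{unfinq01}. The only difference is that the paper asserts the sharper intermediate bound $\|R\|_{\mathfrak F}^{2}\lesssim n\tau^{2}$ (giving $\|X-\widetilde X\|_{\mathfrak F}^{2}\lesssim n^{2}\tau^{2}$ directly), whereas your more conservative $\|R\|_{\mathfrak F}^{2}\lesssim n^{3}\tau^{2}$, combined with $n\tau\le T$, still lands on the required $n^{4}\tau^{2}$.
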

\begin{proof}
From (\ref{unwithfinq0}) 
and \eqref{Un-f=0} it follows
 \begin{equation}\label{unfinq01}
  U_{n} -\widetilde U_n=\tau \mathcal{M}^{-1/2}\sum\limits_{k = 1}^{n} \left(I + \tau\bm{\mathcal{A}}\right)^{-k} \mathcal{M}^{-1/2}  F, \quad n=1,2, \cdots.
\end{equation}
Recalling that
$
    \widetilde U = (\widetilde U_{0},\widetilde U_{1},\cdots, \widetilde U_{n}) 
$
and 
$ U  = \left(  U_0 , {U}_{1} ,  \cdots,  {U}_{n} \right) ,$
  we have
$$  T_*:=U -\widetilde  U= \tau \left(  \bm{0} ,  \theta_{1}F ,  \cdots,    \theta_{n}F \right),$$
where 
\begin{align*}
 {\theta}_{n} &:=\mathcal{M}^{-1/2} \sum_{k = 1}^{n} \left(I + \tau  \bm{\mathcal{A}}    \right) ^{-k} \mathcal{M}^{-1/2}. 
\end{align*}
Then we obtain 
\begin{align*} 
{X}-\widetilde X&=(\widetilde U+  T_* )^{\top}(\widetilde U+  T_*)-\widetilde U^\top \widetilde U\\
&=\widetilde U^{\top}  T_* +   T_* ^{\top}\widetilde U+ T_*^{\top}  T_*.
\end{align*}
By \eqref{time-step} it is easy to show that  
$$ \|\widetilde U\|_{\mathfrak{F}}^2\lesssim n, \quad  \|   T_* \|_{\mathfrak{F}}^2\lesssim n\tau^2\| F \|_{\mathfrak{F}}^2.$$
As a result, we get
\begin{align} 
\|{X}-\widetilde X\|_{\mathfrak{F}}^2&\lesssim n^2\tau^2\| F \|_{\mathfrak{F}}^2+n^2\tau^4\| F \|_{\mathfrak{F}}^4\lesssim n^2\tau^2, 
\nonumber
\end{align}
which, together with the triangle inequality and Lemma \ref{SEAMsecondorderparabolicequation}, yields the desired estimate.
\end{proof}

Using Remark \ref{remk4.1} and Lemmas \ref{whlemma} and \ref{SEAMsecondorderparabolicequation}, we  immediately obtain the following main conclusion for the full discretization (\ref{unwithfinq0}): %
\begin{theorem}\label{SEAMinhparabolicequation}
Assume   that   $\tau$ is small enough such that   \eqref{time-step} holds, then  we have
\begin{equation}\label{xperum}
\begin{aligned}
 \left(\lambda_{0}( {X})-\lambda_{0}(\bar{X}_{*})\right)^2 +  \sum\limits_{ k=1}^{n} \lambda_{k}( {X})^2  \lesssim  n^4\tau^2 . 
\end{aligned}
\end{equation} 
\end{theorem}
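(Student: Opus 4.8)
The plan is to obtain this statement as a direct consequence of the Hoffman--Wielandt inequality applied to the pair $\bar X_*$ and $X$, using the rank-one structure of $\bar X_*$ to kill all but the leading eigenvalue on one side. First I would note that both $X = U^\top U$ and $\bar X_* = \bar U_*^\top \bar U_*$ are real symmetric (in fact positive semidefinite Gram) matrices of size $(n+1)\times(n+1)$, so Lemma \ref{whlemma} applies with $A := \bar X_*$ and $A+E := X$, i.e. $E := X - \bar X_*$. This gives at once
$$
\sum_{k=0}^{n}\bigl(\lambda_k(X)-\lambda_k(\bar X_*)\bigr)^2 \le \|X-\bar X_*\|_{\mathfrak F}^2 .
$$

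Next I would invoke Remark \ref{remk4.1}: since $\bar U_*$ has rank at most one, $\bar X_*$ has at most one nonzero eigenvalue, so $\lambda_1(\bar X_*)=\cdots=\lambda_n(\bar X_*)=0$ under the decreasing ordering. Substituting this into the left-hand side above, the $k=0$ term is exactly $\bigl(\lambda_0(X)-\lambda_0(\bar X_*)\bigr)^2$, and each term with $k\ge 1$ reduces to $\lambda_k(X)^2$, so the left-hand side becomes precisely the quantity appearing in \eqref{xperum}. It then remains only to bound the Frobenius norm on the right, which is furnished by the appropriate perturbation estimate for the inhomogeneous solution matrix, Lemma \ref{SEAMsecondorderparabolicequation2}, giving $\|X-\bar X_*\|_{\mathfrak F}^2 \lesssim n^4\tau^2$ and hence \eqref{xperum}.

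I do not expect a genuine obstacle here: the theorem is essentially a bookkeeping corollary once Lemmas \ref{whlemma} and \ref{SEAMsecondorderparabolicequation2} and Remark \ref{remk4.1} are in hand. The only points that warrant a line of care are (i) verifying that the common decreasing ordering of eigenvalues makes the index $k$ match on both sides of the Hoffman--Wielandt sum, so that the vanishing of $\lambda_k(\bar X_*)$ for $k\ge 1$ really does turn those terms into $\lambda_k(X)^2$; and (ii) citing the perturbation bound for $X=U^\top U$ built from the full (inhomogeneous) recursion \eqref{unwithfinq0}, namely Lemma \ref{SEAMsecondorderparabolicequation2}, rather than the homogeneous version Lemma \ref{SEAMsecondorderparabolicequation} for $\widetilde X$, since the forcing term $F$ is present in the statement at hand.
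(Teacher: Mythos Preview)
Your proposal is correct and follows essentially the same route as the paper: apply Hoffman--Wielandt (Lemma~\ref{whlemma}) to the pair $X$ and $\bar X_*$, use the rank-one structure of $\bar X_*$ from Remark~\ref{remk4.1} to reduce the $k\ge 1$ terms to $\lambda_k(X)^2$, and bound $\|X-\bar X_*\|_{\mathfrak F}^2$ by the perturbation lemma. Your observation in point~(ii) is on the mark: since the theorem concerns $X=U^\top U$ with the forcing term present, the relevant estimate is Lemma~\ref{SEAMsecondorderparabolicequation2}, and the paper's citation of Lemma~\ref{SEAMsecondorderparabolicequation} in the lead-in sentence is presumably a slip.
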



This theorem directly yields the following result: 
\begin{corollary}\label{corr1}
Assume   that   $\tau$ is small enough such that   \eqref{time-step} holds,  and  that   $n = O(\tau^{-\frac{1}{2}+ \epsilon} )$ for some $\epsilon\in (0 ,\frac{1}{2})$,  then  we have
\begin{align}\label{eigen-X11}
&\left(\lambda_{0}(X)-\lambda_{0}(\bar{X}_{*})\right)^{2} + \sum_{k = 1}^{n}\lambda_{k}^{2}(X) \lesssim  \tau^{4\epsilon}. 
%
\end{align} 
\end{corollary}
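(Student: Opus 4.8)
The plan is to obtain Corollary \ref{corr1} as an immediate consequence of Theorem \ref{SEAMinhparabolicequation} by substituting the growth condition on $n$ into the right-hand side of \eqref{xperum}. First I would simply restate the theorem's conclusion: under \eqref{time-step}, which is assumed verbatim in the corollary, there is a constant $C>0$ independent of $n$ and $\tau$ with
$$\left(\lambda_{0}(X)-\lambda_{0}(\bar{X}_{*})\right)^{2} + \sum_{k=1}^{n}\lambda_{k}^{2}(X) \le C\, n^{4}\tau^{2}.$$
Nothing new needs to be verified at this step; the analytic content was already supplied by Lemmas \ref{SEAMsecondorderparabolicequation}--\ref{SEAMsecondorderparabolicequation2} and Theorem \ref{SEAMinhparabolicequation}.

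Second, I would unpack the hypothesis $n = O(\tau^{-\frac12+\epsilon})$, i.e. there is a constant $C_{0}>0$ independent of $\tau$ such that $n \le C_{0}\,\tau^{-\frac12+\epsilon}$. Raising to the fourth power gives $n^{4}\le C_{0}^{4}\,\tau^{-2+4\epsilon}$, hence $n^{4}\tau^{2}\le C_{0}^{4}\,\tau^{4\epsilon}$. Inserting this into the displayed bound above yields
$$\left(\lambda_{0}(X)-\lambda_{0}(\bar{X}_{*})\right)^{2} + \sum_{k=1}^{n}\lambda_{k}^{2}(X) \le C\,C_{0}^{4}\,\tau^{4\epsilon} \lesssim \tau^{4\epsilon},$$
which is exactly \eqref{eigen-X11}.

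The only two points I would flag for care are: (i) the generic constant hidden in $\lesssim$ now absorbs $C_{0}^{4}$, but since $C_{0}$ depends only on the asymptotic relation $n=O(\tau^{-\frac12+\epsilon})$ and not on $\tau$ itself, this is consistent with the paper's convention that $\lesssim$-constants are independent of $n$ and $\tau$; and (ii) the restriction $\epsilon\in(0,\frac12)$ ensures $-\frac12+\epsilon<0$ (so $n$ genuinely grows slower than $\tau^{-1/2}$) and, more importantly, makes the exponent $4\epsilon$ strictly positive, so that $\tau^{4\epsilon}\to 0$ as $\tau\to 0$ and the estimate is a true decay statement rather than a vacuous bound.

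In short, there is essentially no obstacle: the corollary is a one-line substitution, and the heavy lifting is already done upstream. If anything more were to be said, it would be the observation that the scaling $n^{4}\tau^{2}=O(\tau^{4\epsilon})$ mirrors the model-order-reduction error bound $\sum_{k=1}^{n}\lambda_{k}(X)\lesssim n^{2}\tau$ noted after \eqref{RB-SEAM}, so the corollary just makes explicit the time-horizon regime in which SEAM is provably accurate.
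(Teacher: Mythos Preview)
Your proposal is correct and matches the paper's approach exactly: the paper states that Corollary~\ref{corr1} follows directly from Theorem~\ref{SEAMinhparabolicequation} without giving any further argument, and your substitution $n^{4}\tau^{2}\le C_{0}^{4}\tau^{4\epsilon}$ is precisely the intended one-line derivation. Your remarks about the constant $C_{0}$ and the role of $\epsilon\in(0,\tfrac12)$ are accurate and, if anything, more careful than the paper itself.
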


\begin{remark}\label{remark1}
From   Corollary \ref{corr1}, we can see that each singular value of the high-fidelity solution matrix ${U}$ is a small perturbation of the corresponding singular value of the reference matrix $\bar{U}_{*}$, provided that $n = O(\tau^{-\frac{1}{2}+ \epsilon} )$. Since $\bar{U}_{*}$  has at most one non-zero singular value, all the singular values  of $U$, except for the largest one, are  then  close to zero. 
\end{remark}

\begin{remark}\label{remark2}
From the analysis we know that Corollary \ref{corr1} holds not only  for $U=(U_{0},U_{1},\cdots,U_{ n})$, but also for  $U =(U_{k},U_{k+1},\cdots,U_{k + n})$
 with $k=0,1, \cdots$. This motivates the parallel SEAM in Section 3.
%
%
\end{remark}

\section{Numerical Results}
This section provides some numerical experiments to  verify the performance of SEAM for the fully discrete scheme (\ref{unwithfinq0}) using continuous  linear finite elements.  All the tests are performed on a 12th-Gen Intel 3.20 GHz Core i9 computer.

\subsection{Two-dimensional tests}
We consider the following two-dimensional problem: 
\begin{equation}\label{nml2}
\left\{\begin{array}{ll}
\frac{\partial u}{\partial t}=\alpha_{1}(x,y)\frac{\partial^{2} u}{\partial x^{2}}+\alpha_{2}(x,y)\frac{\partial^{2} u}{\partial y^{2}} - c(x,y)u + f, & \text{ in } D\times (0,T], \\
u=u_{0}, & \text { on } D \times\{0\} ,\\
u = 0,&   \text { on } \partial D \times[0, T],
\end{array}\right.
\end{equation}
where   $D = (0,1)\times (0,1)$ and $T = 1$. To obtain  the high-fidelity numerical solutions,  we first divide   the spatial domain $D$  uniformly   into $32\times 32=1024$ squares, then divide each  square into  $2$  isosceles right triangles. For the temporal subdivision, the coefficients $\alpha_{i}$, the initial data $u_{0}(x,y)$ and the source term $f$, we consider three situations:
\begin{itemize}
\item[S1:]$\tau= 1\times 10^{-4}$, $\alpha_{1}(x,y) = \alpha_{2}(x,y) = c(x,y) = 1$, $u_{0}(x,y) = \sin (\pi xy)$, and $f=0, \  xy$;

\item [S2:]$\tau= 1\times 10^{-4}$,  $\alpha_{1}(x,y) = x^{2}$, $\alpha_{2}(x,y) = y^{2}$,  $c(x,y) = \pi^{2}(1 - 2x^{2}y^{2})$, $u_{0}(x,y) = \sin (\pi x)\sin(\pi y)$, and $f=0,\ 10$;

\item [S3:]$\tau= 2.5\times 10^{-3}$,  $\alpha_{1}(x,y) = x^{2}$, $\alpha_{2}(x,y) = y^{2}$,  $c(x,y) = \pi^{2}(1 - 2x^{2}y^{2})$, $u_{0}(x,y) = \sin (\pi x)\sin(\pi y)$, and $f=0$.
 \end{itemize}
 
In S1 and S2, we take $n = \tilde n=100$. In S3, we take $n = \tilde n = 20$. Figures \ref{2D-eigenvaluechanges1f0} to  \ref{2D-eigenvaluechanges3f0} show  the distribution of eigenvalues of  $\mathfrak{U}_{i}^\top \mathfrak{U}_{i}$ for different $f$ in situations  S1,S2 and S3, where  the numerical solution matrix $\mathfrak{U}_{i}$ is defined by \eqref{U_i} for $i=1,2,\cdots, \tilde n$. In each figure, the label 'No.s of time interval' means the $\tilde n$ time intervals, and the  label 'No.s of eigenvalues' means the first 5 eigenvalues in each time interval.

Figures \ref{seams1f0} to \ref{seams3f0} demonstrate the  high-fidelity numerical solutions and the corresponding SEAM solutions of (\ref{nml2}). 

Tables \ref{details-2D} gives some computational details as well as the relative errors between the  the  SEAM solution $u_{SEAM}$ and the high-fidelity numerical solution $u_{fem}$, with $$\text{SEAM-Error}_{L^{2}} := \left(\int_{D \times (0,T]}(u_{fem} - u_{SEAM})^{2}d{\bf x}dt\right)^{\frac{1}{2}}/ \left(\int_{D \times (0,T]} u_{fem}  ^{2}d{\bf x}dt\right)^{\frac{1}{2}}.$$

From Figures \ref{2D-eigenvaluechanges1f0} to  \ref{2D-eigenvaluechanges3f0}, we have the following observations on the distribution of eigenvalues:
\begin{itemize}
\item   In each case the principal eigenvalue of $\mathfrak{U}_{i}^\top \mathfrak{U}_{i}$  is much larger than other eigenvalues. For example, in the case S2 with $f=xy$, for the first 100 steps, the principal eigenvalue is 2373.15 and the second largest one is 0.000256;
\item  The eigenvalues of $\mathfrak{U}_{i}^\top \mathfrak{U}_{i}$ except the principal one are close to zero when $f=0$ and, though being not so close to zero, are still very small. These are   consistent with Corollary \ref{corr1};
\item  As $i$ increases from $1$ to $\tilde n$,   the principal eigenvalues of $\mathfrak{U}_{i}^\top \mathfrak{U}_{i}$ gradually decrease. For example, in the case S1 with $f=0$, the principal eigenvalues decrease from 3376.2 to 38.0. In the case S2 with $f=10$, the principal eigenvalues decrease from 2389.5 to 1258.0. And in the case S3, the principal eigenvalues decrease from 4495.9 to 328.3.

 \end{itemize}
 
 From     Figures \ref{seams1f0} to \ref{seams3f0} and Tables \ref{details-2D}, we have the following observations on the SEAM solutions:
\begin{itemize}
\item     SEAM gives accurate numerical solutions for   all cases;

\item  SEAM is much faster than the original high-fidelity numerical method, due to the remarkable reduction of the size of the discrete model from $M=961$ to $p=1$.
\end{itemize}

\begin{figure}[htbp]
    \centering
   \subfigure[The 1th-20th time intervals]{\includegraphics[width=5cm, height =3cm]{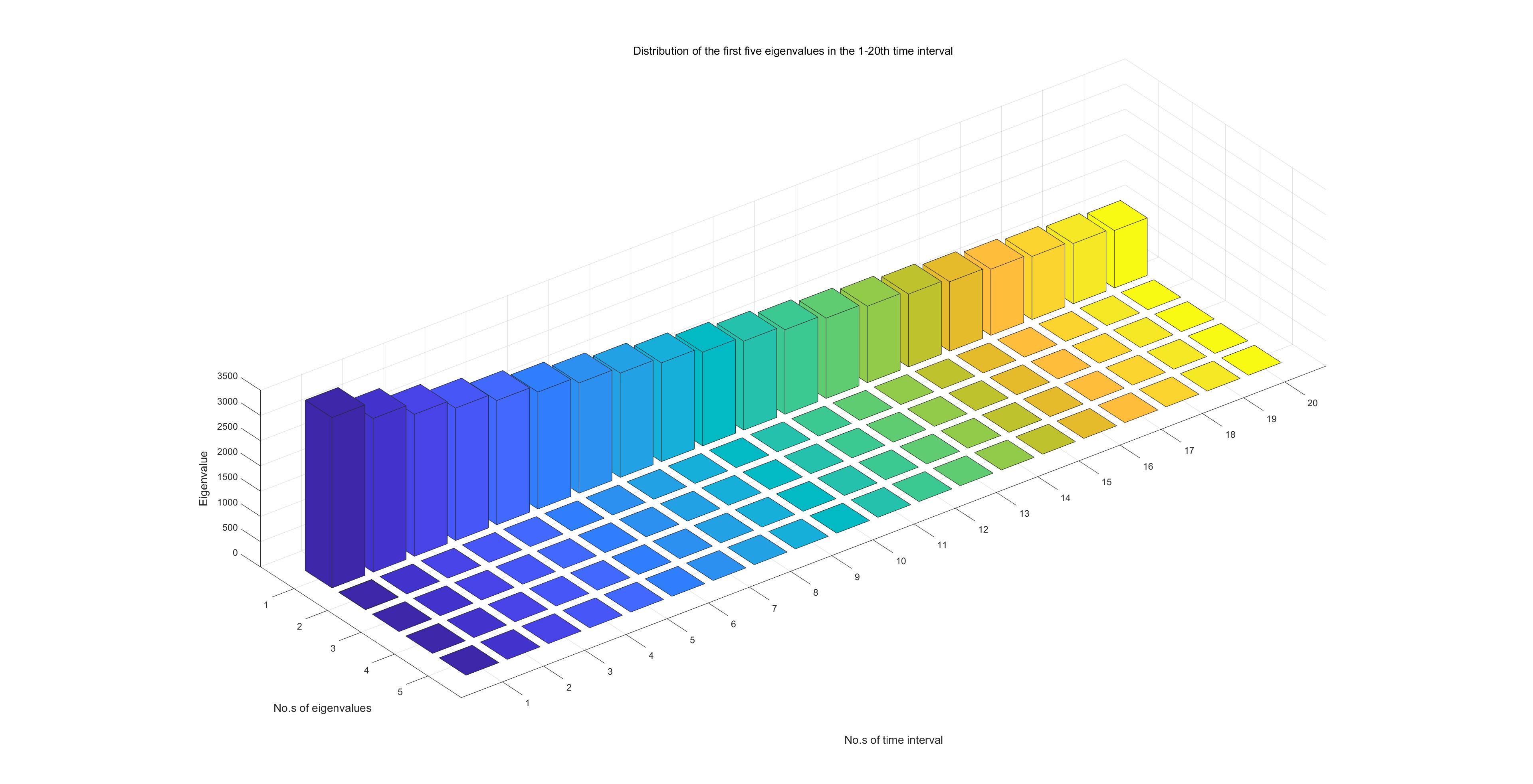}} 
   \subfigure[ The 21th-40th time intervals]{\includegraphics[width=5cm, height =3cm]{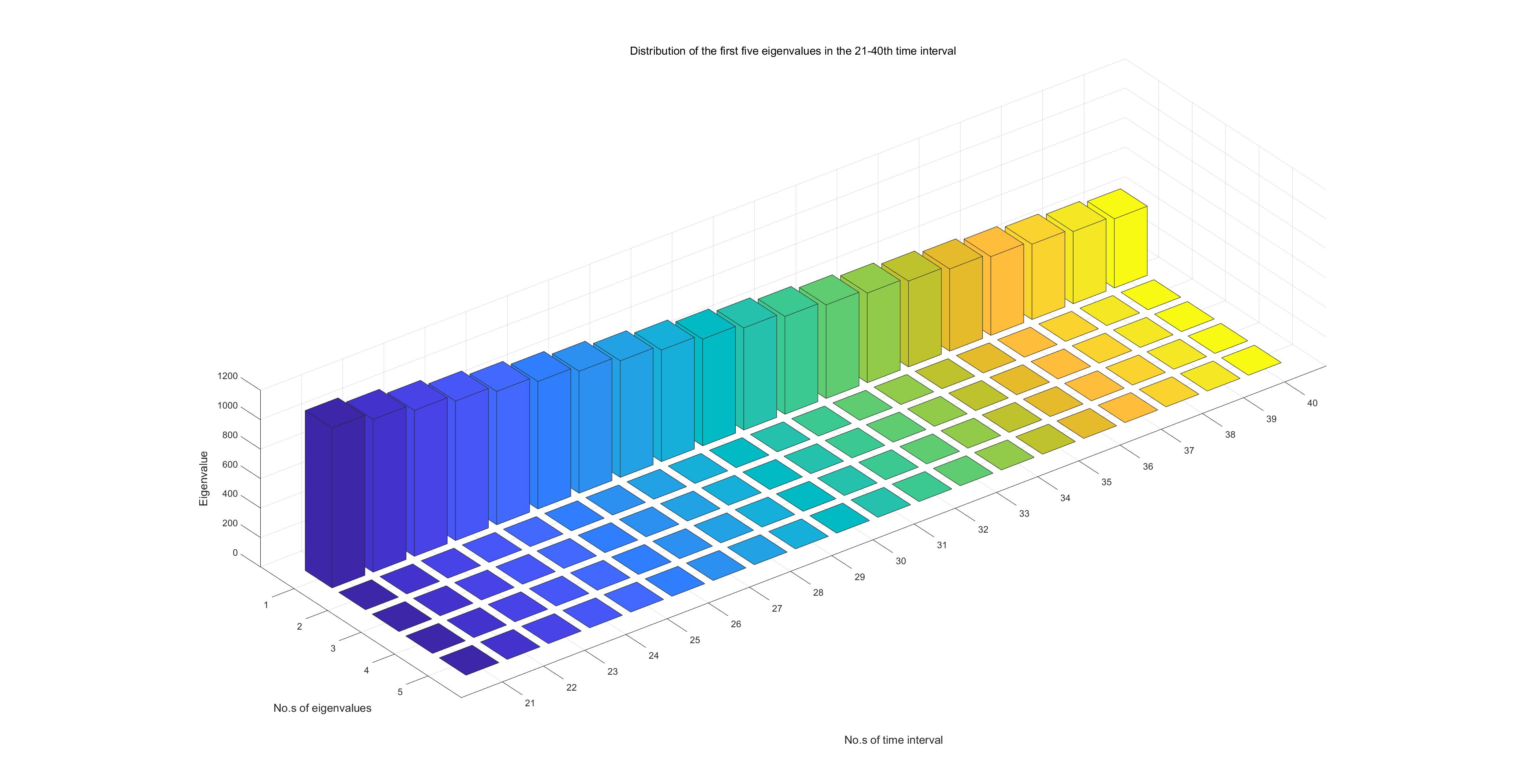}}

    \subfigure[  The 41th-60th time intervals]{\includegraphics[width=5cm, height =3cm]{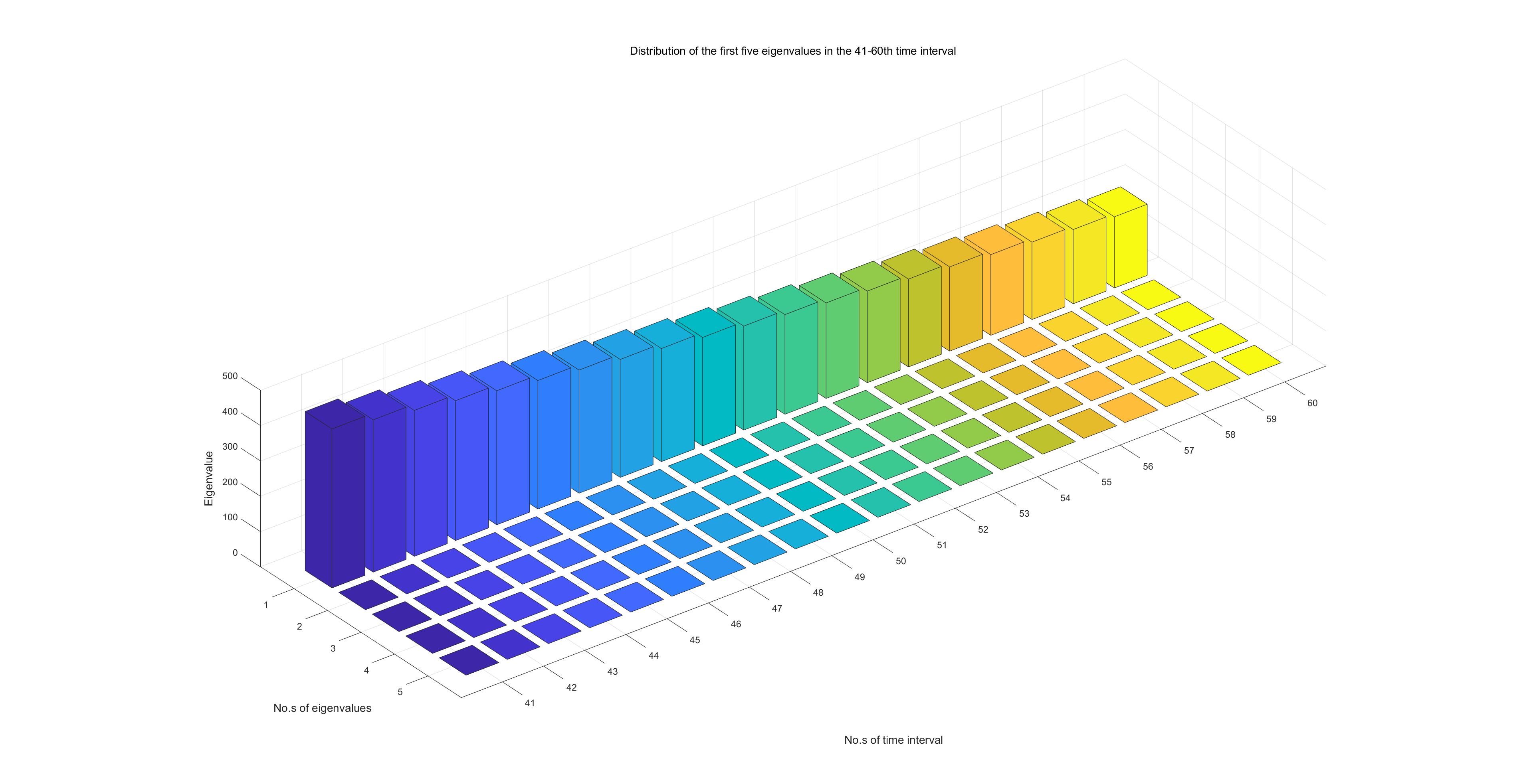}}
    \subfigure[  The 61th-80th time intervals]{\includegraphics[width=5cm, height =3cm]{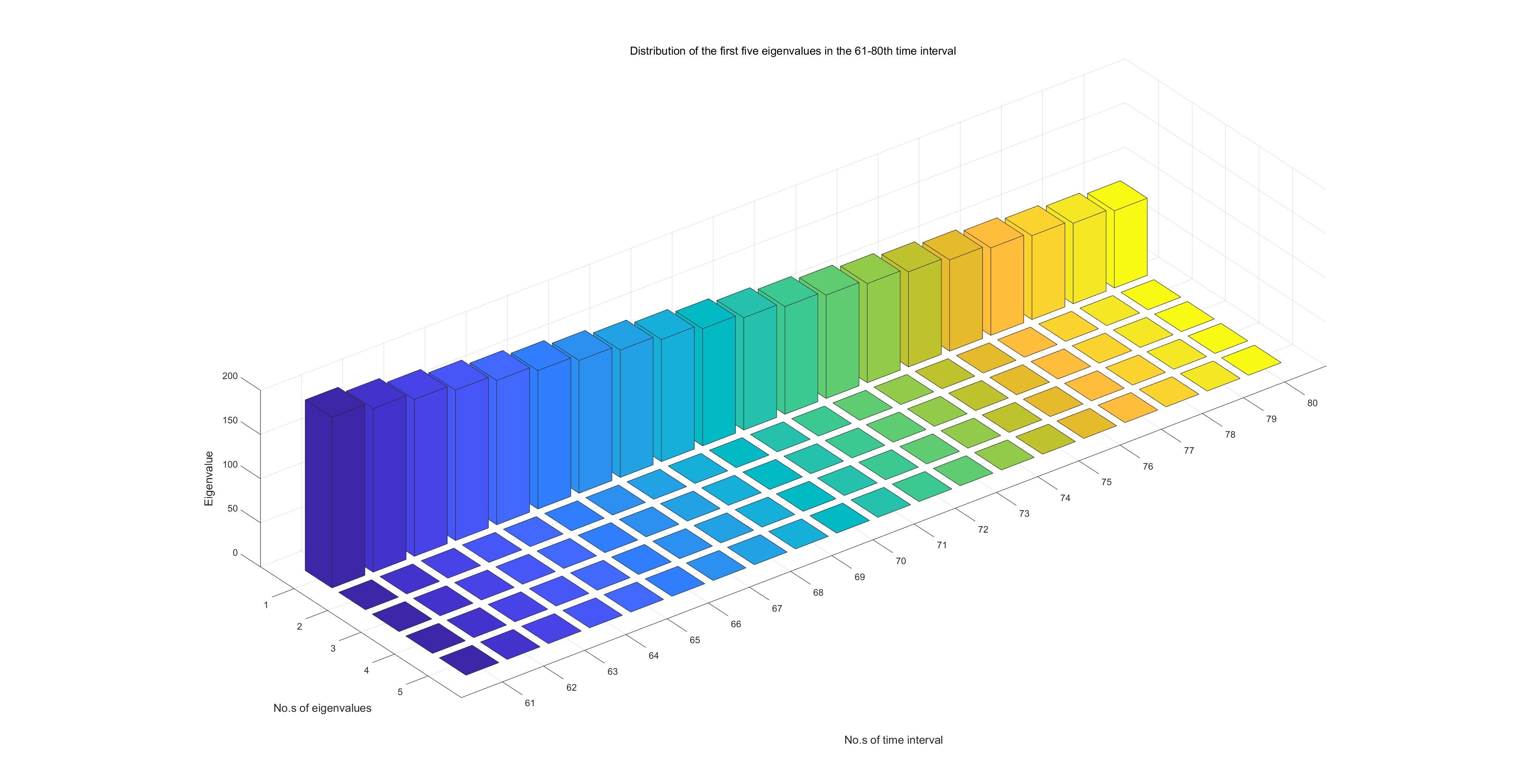}}

     \subfigure[  The 81th-100th time intervals]{\includegraphics[width=5cm, height =3cm]{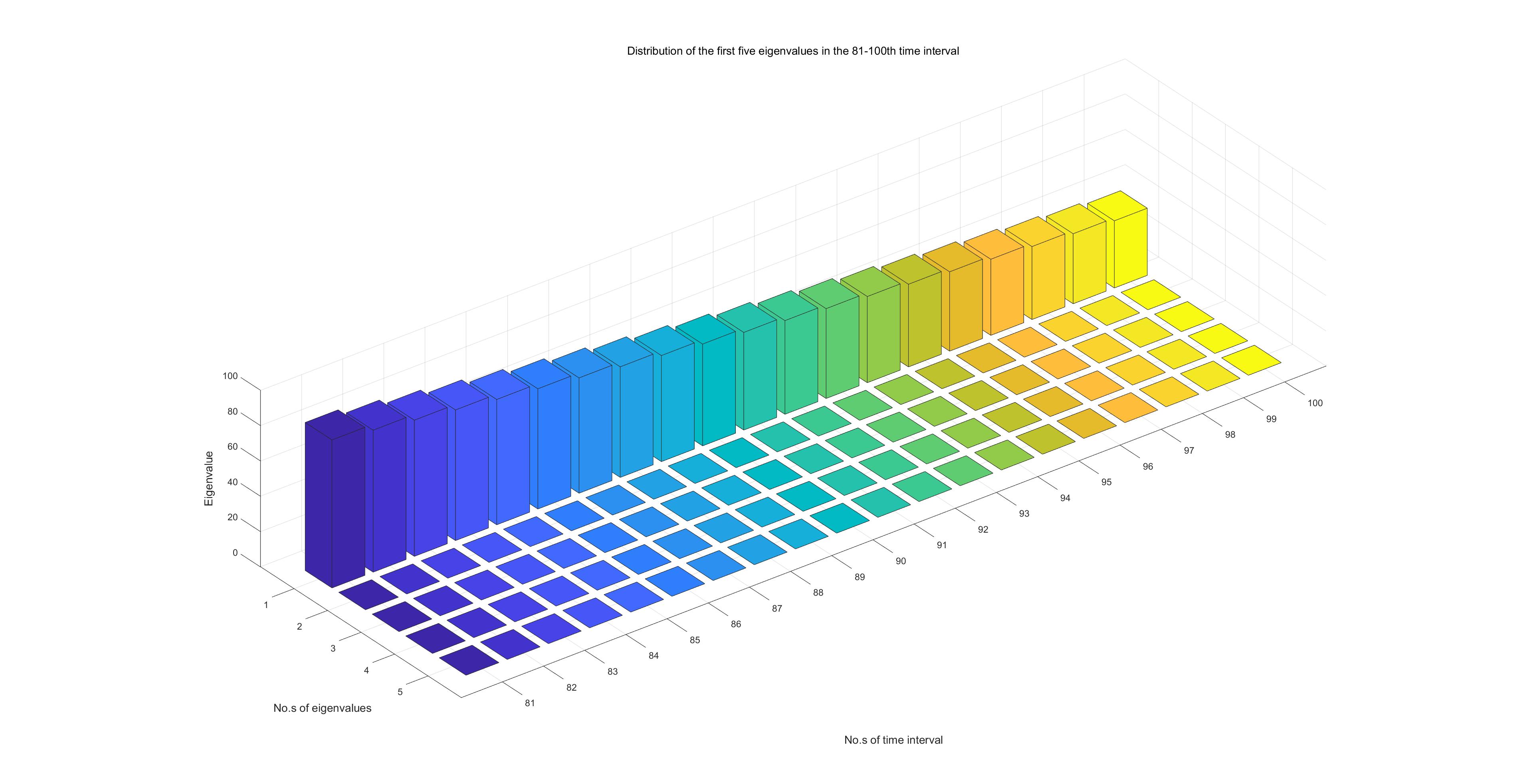}}
    \caption{The first 5 eigenvalues of $\mathfrak{U}_{i}^\top \mathfrak{U}_{i}$ (from big to small) for S1, $f=0$.}\label{2D-eigenvaluechanges1f0}
\end{figure}
\begin{figure}[htbp]
    \centering
   \subfigure[The 1th-20th time intervals]{\includegraphics[width=5cm, height =3cm]{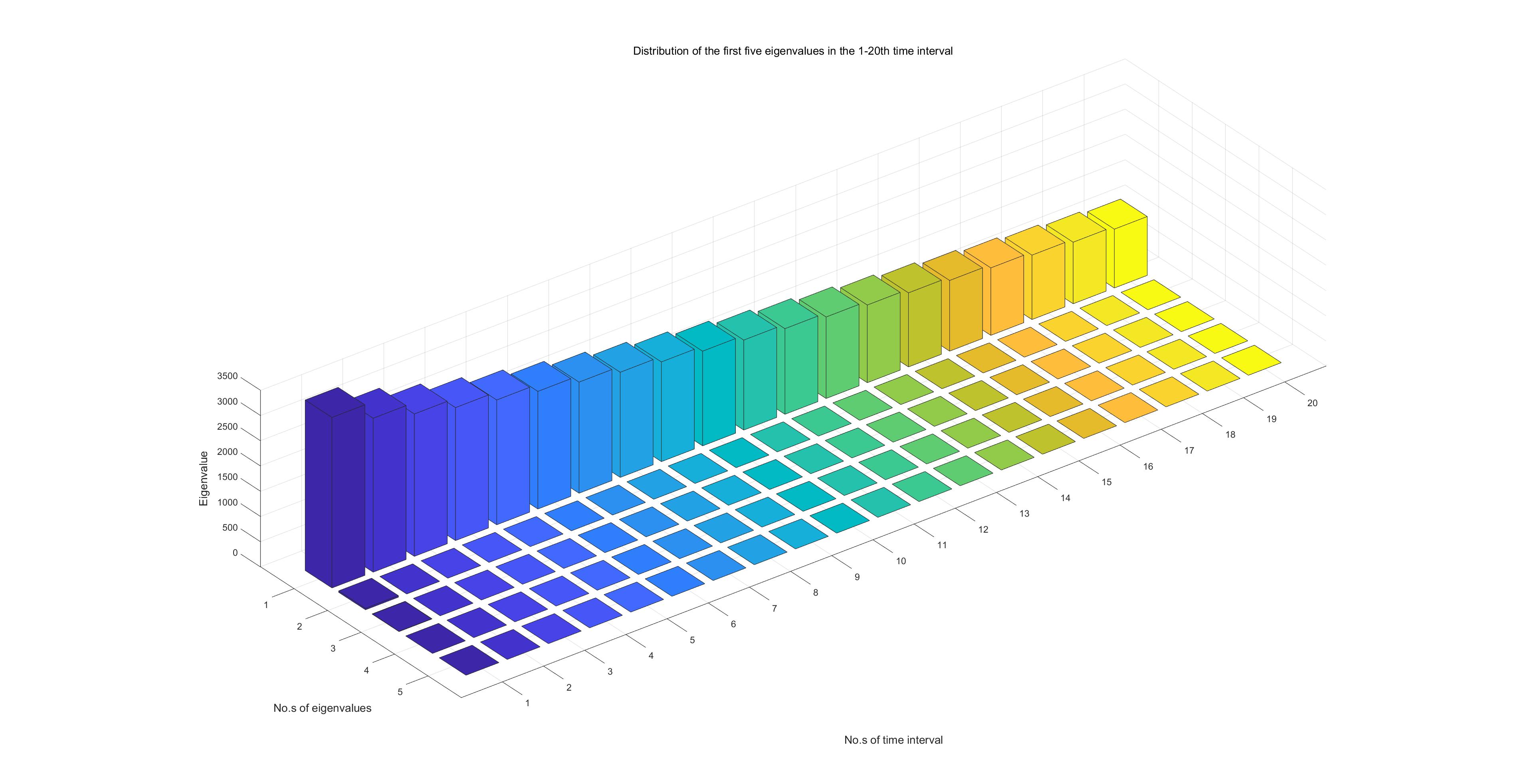}} 
   \subfigure[ The 21th-40th time intervals]{\includegraphics[width=5cm, height =3cm]{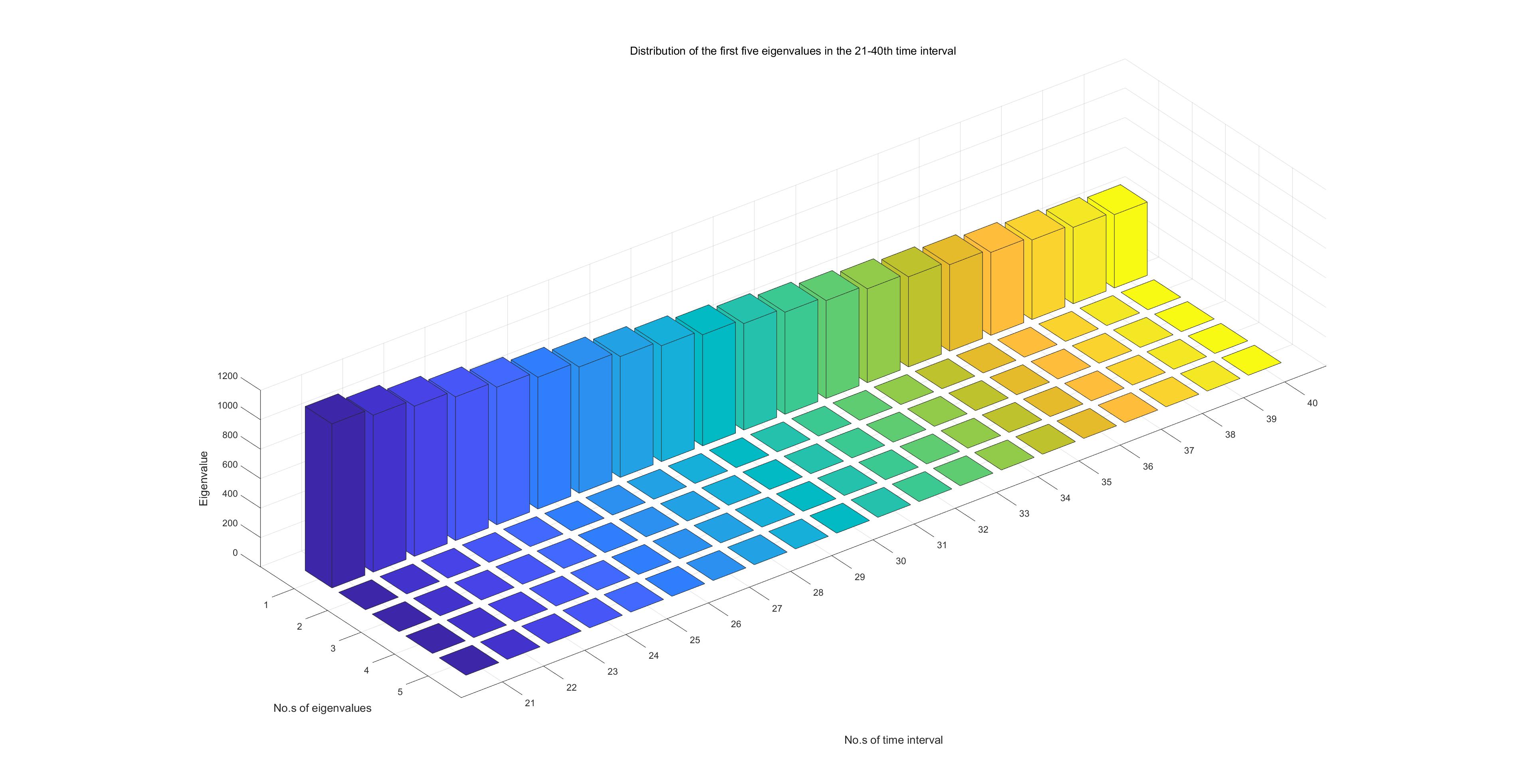}}

    \subfigure[  The 41th-60th time intervals]{\includegraphics[width=5cm, height =3cm]{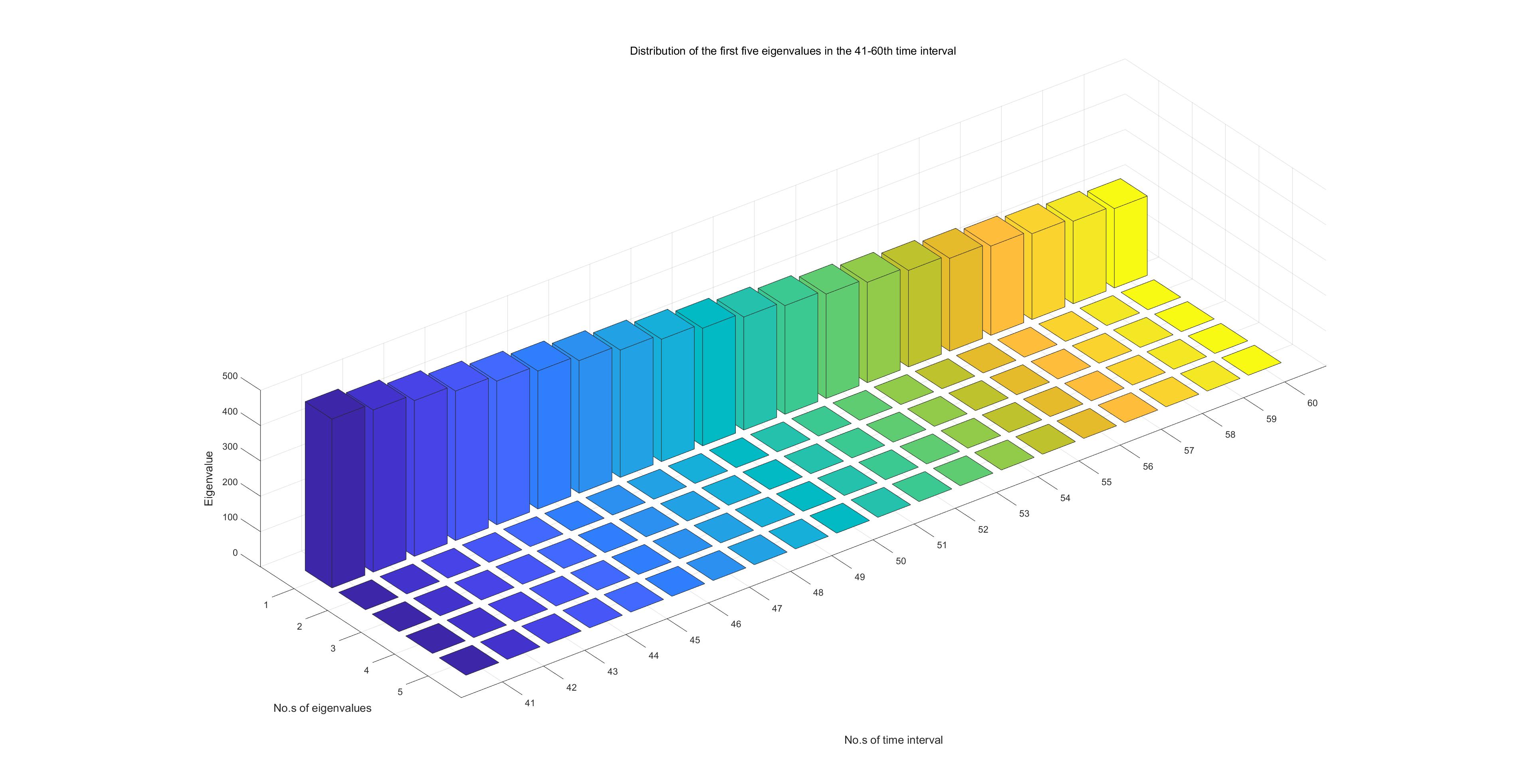}}
    \subfigure[  The 61th-80th time intervals]{\includegraphics[width=5cm, height =3cm]{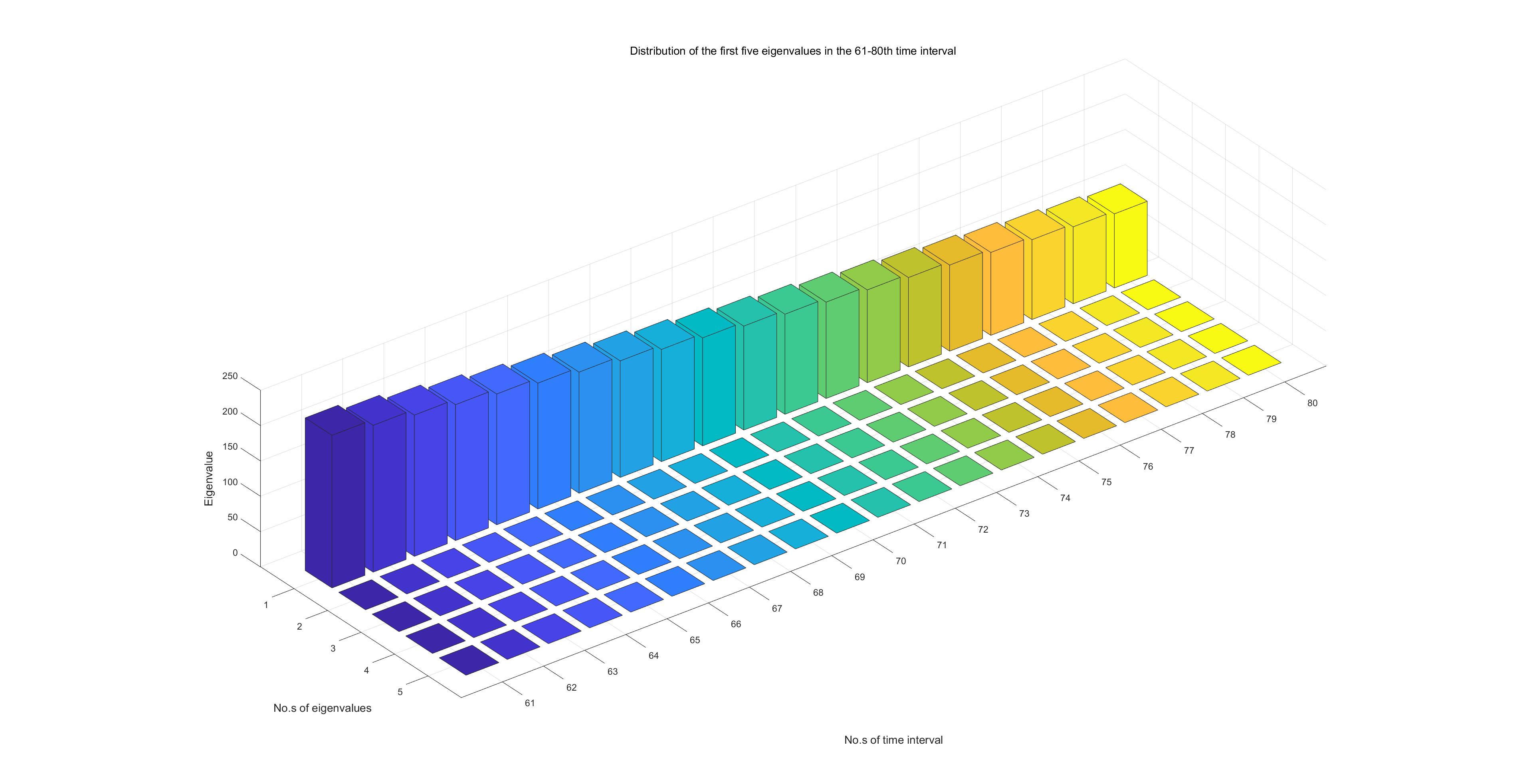}}

     \subfigure[  The 81th-100th time intervals]{\includegraphics[width=5cm, height =3cm]{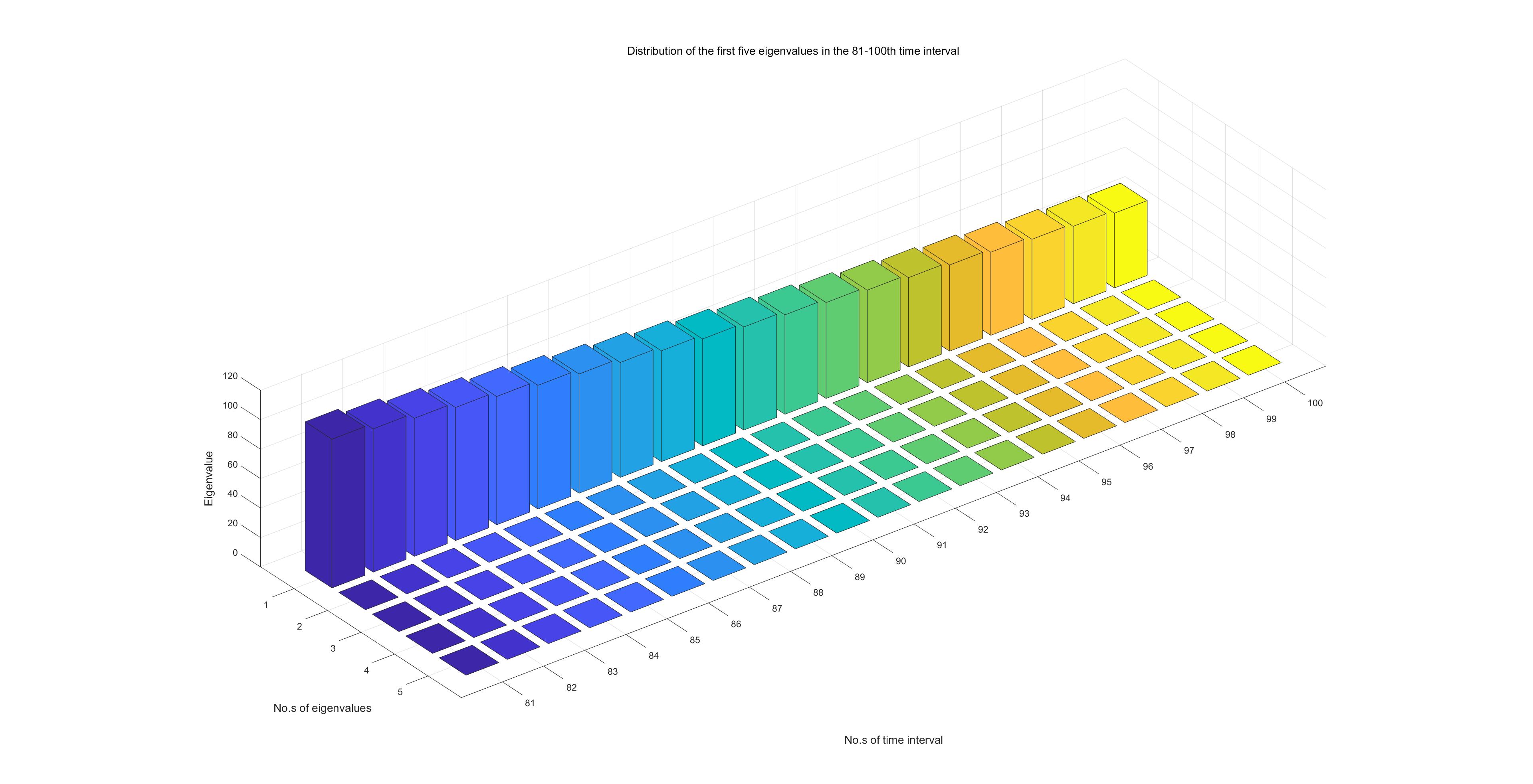}}
    \caption{The first 5 eigenvalues of $\mathfrak{U}_{i}^\top \mathfrak{U}_{i}$ (from big to small) for S1 with $f=xy$.}\label{2D-eigenvaluechanges1fxy}
\end{figure}
\begin{figure}[htbp]
    \centering
   \subfigure[The 1th-20th time intervals]{\includegraphics[width=5cm, height =3cm]{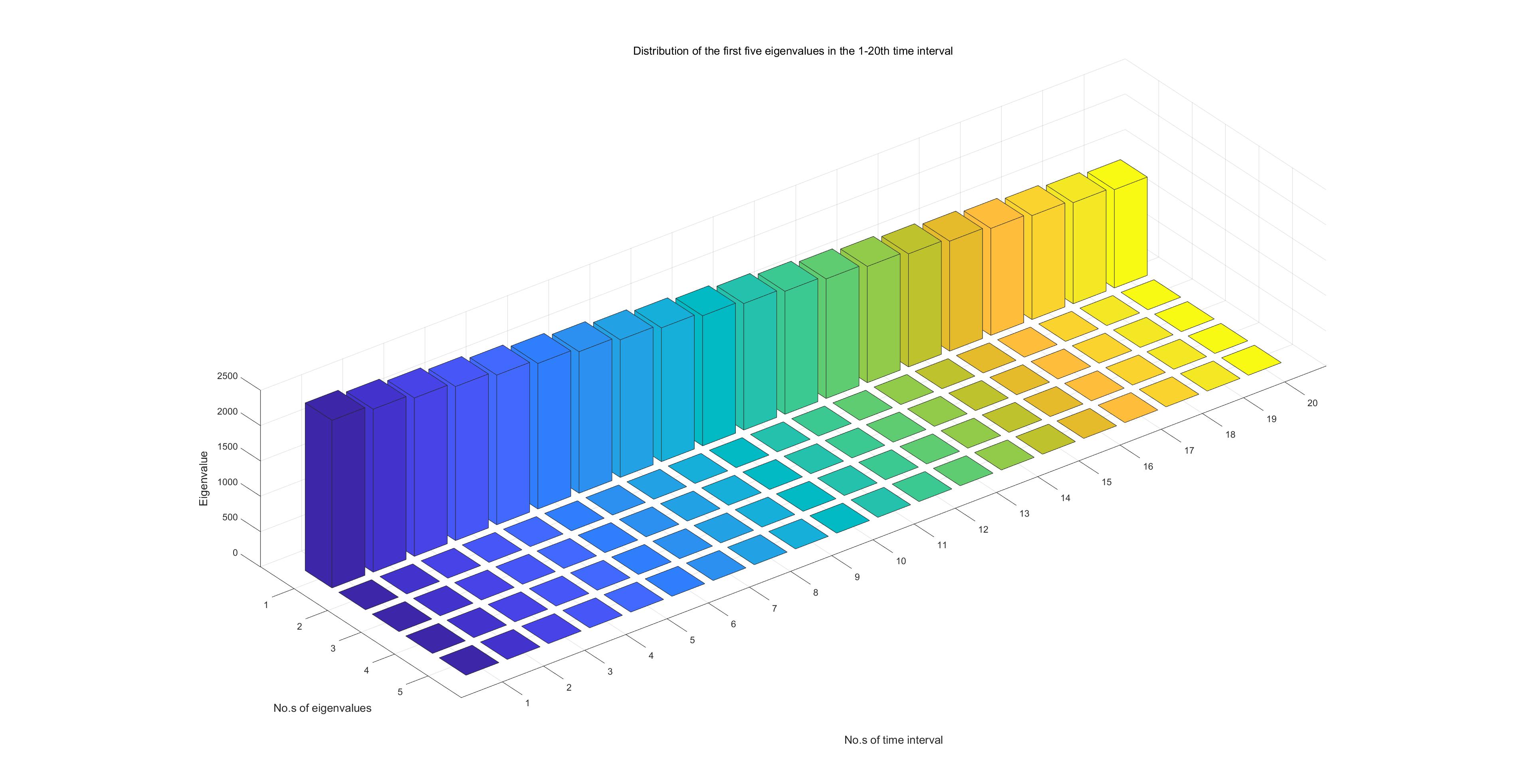}} 
   \subfigure[ The 21th-40th time intervals]{\includegraphics[width=5cm, height =3cm]{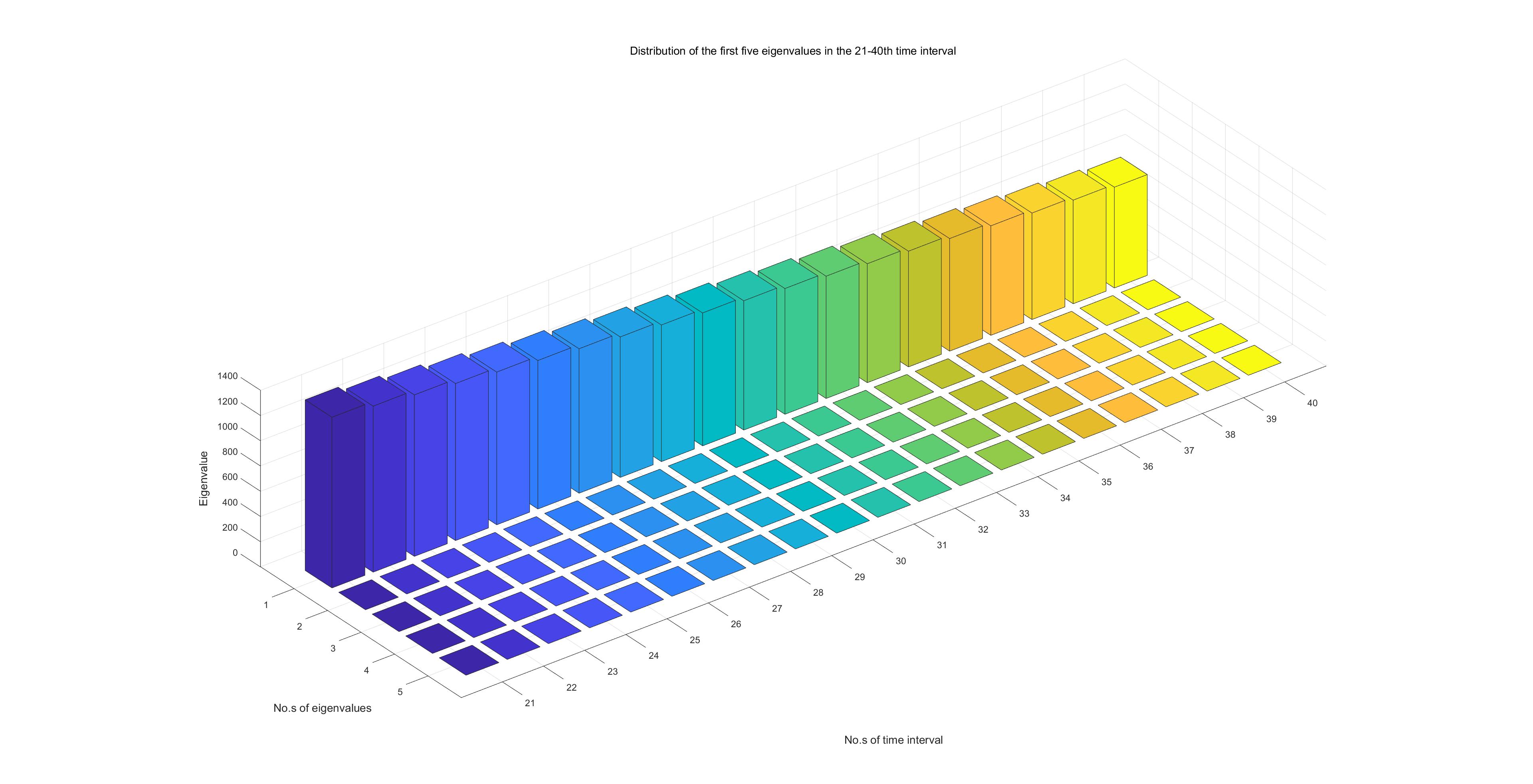}}

    \subfigure[  The 41th-60th time intervals]{\includegraphics[width=5cm, height =3cm]{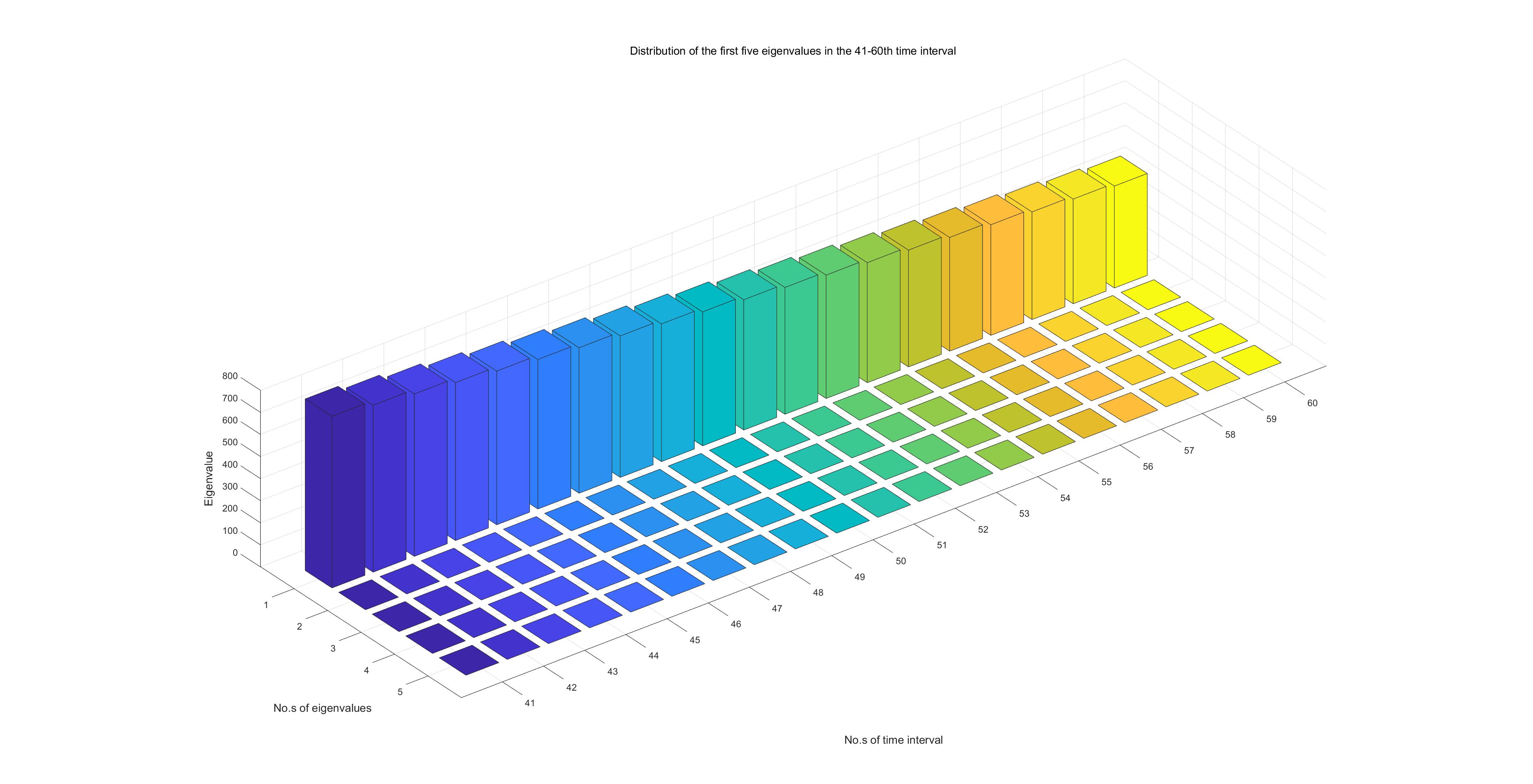}}
    \subfigure[  The 61th-80th time intervals]{\includegraphics[width=5cm, height =3cm]{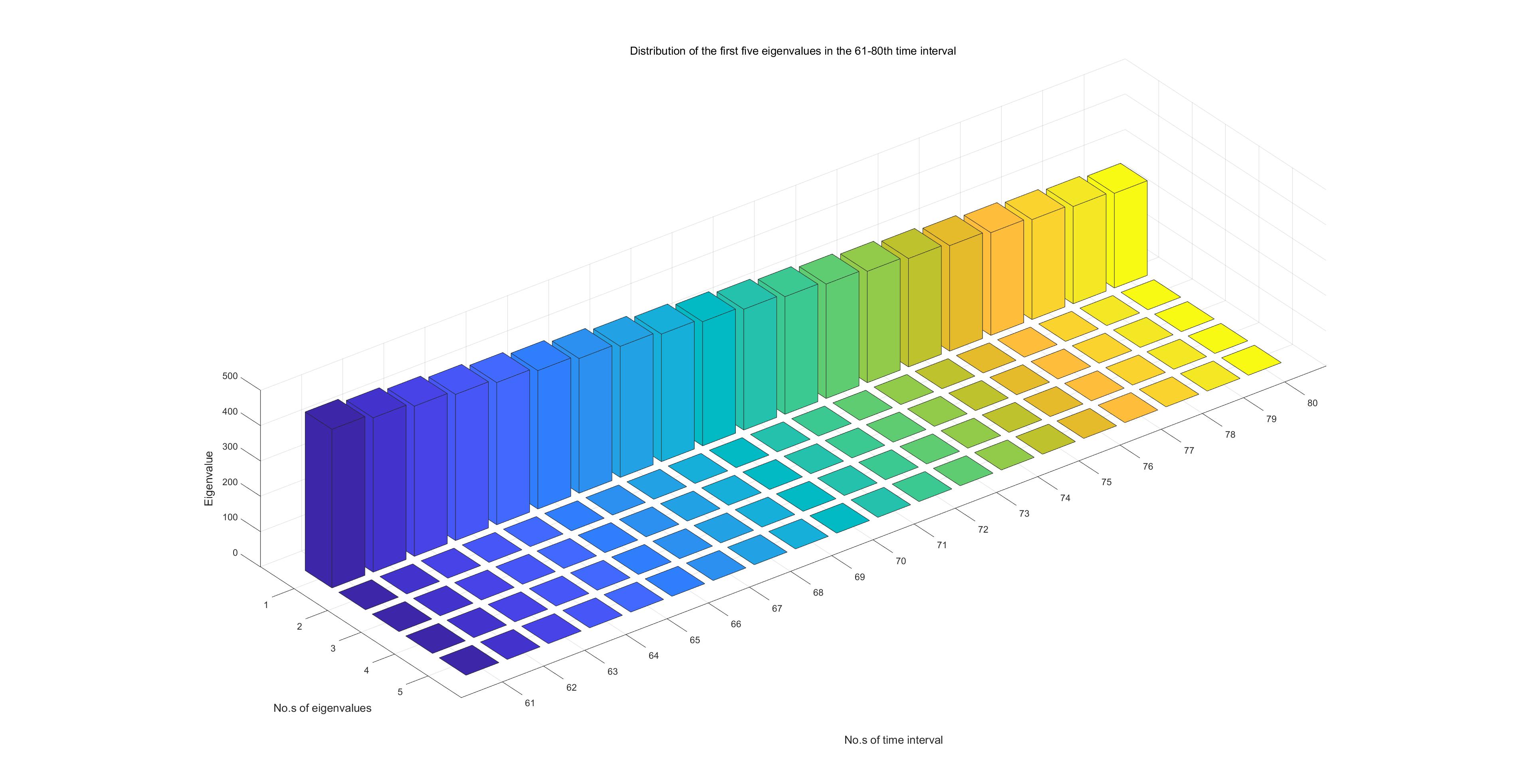}}

     \subfigure[  The 81th-100th time intervals]{\includegraphics[width=5cm, height =3cm]{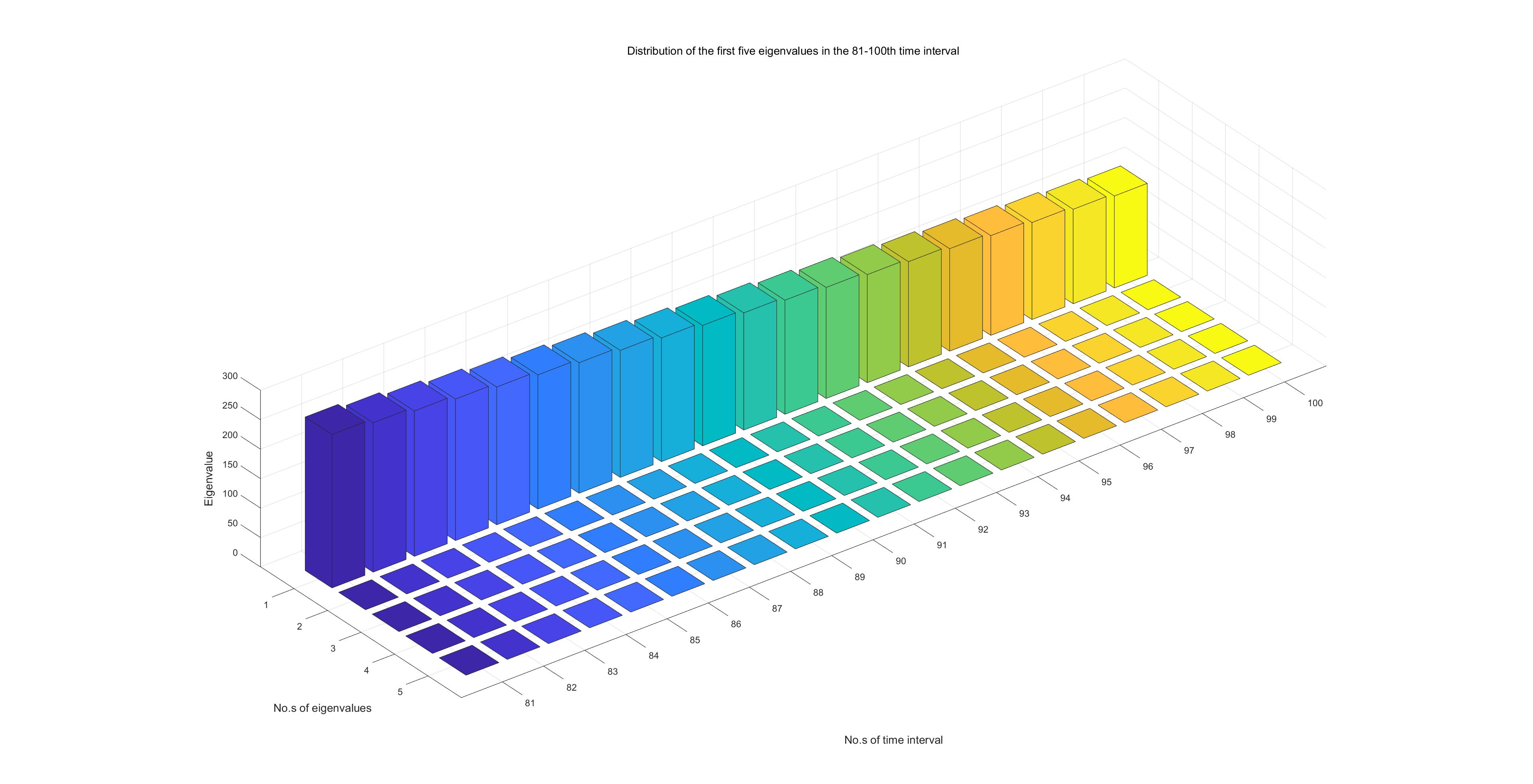}}
    \caption{The first 5 eigenvalues of $\mathfrak{U}_{i}^\top \mathfrak{U}_{i}$ (from big to small) for S2 with $f=0$.}\label{2D-eigenvaluechanges2f0}
\end{figure}
\begin{figure}[htbp]
    \centering
   \subfigure[The 1th-20th time intervals]{\includegraphics[width=5cm, height =3cm]{picnew/eig_s2_f0_1_20.jpg}} 
   \subfigure[ The 21th-40th time intervals]{\includegraphics[width=5cm, height =3cm]{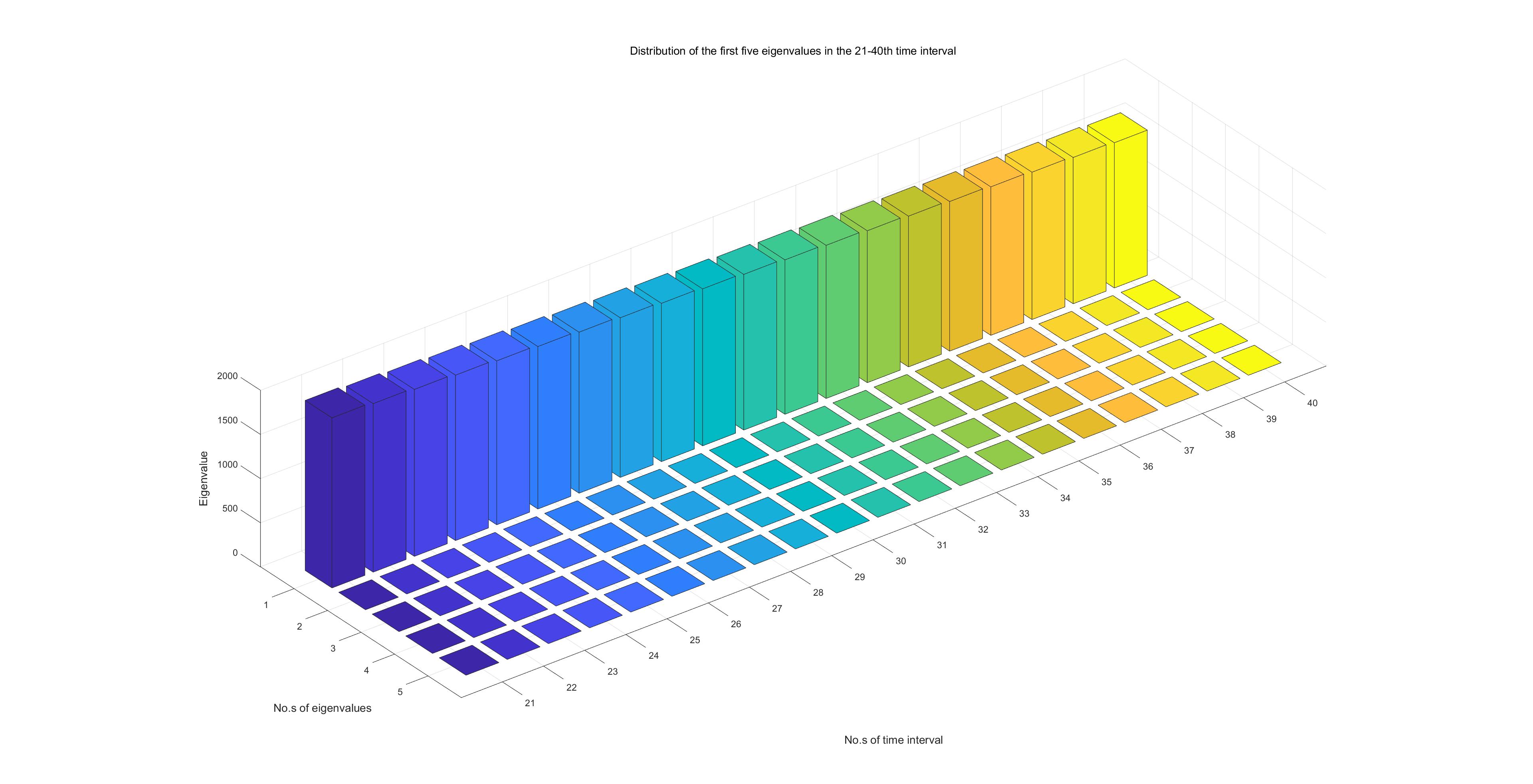}}

    \subfigure[  The 41th-60th time intervals]{\includegraphics[width=5cm, height =3cm]{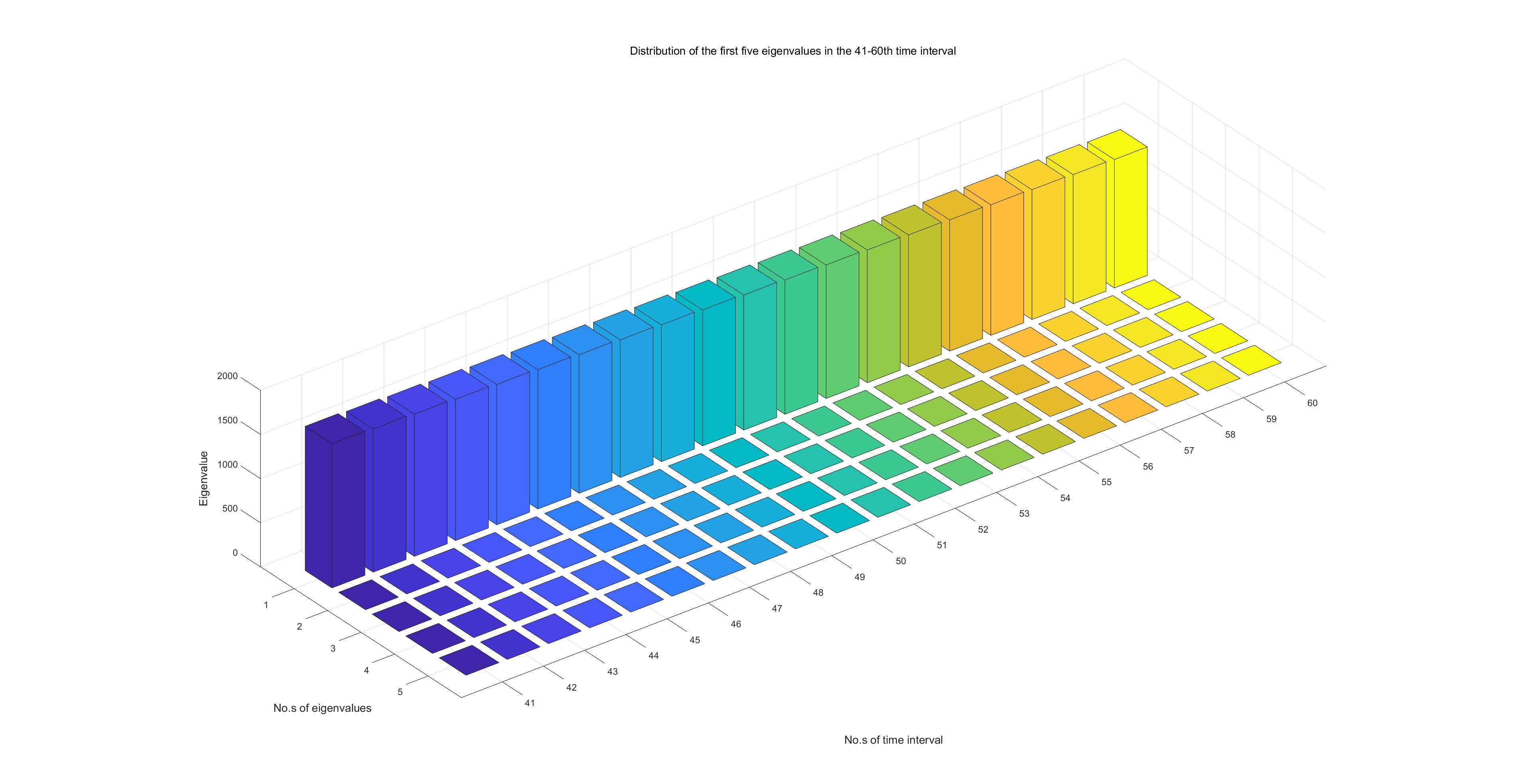}}
    \subfigure[  The 61th-80th time intervals]{\includegraphics[width=5cm, height =3cm]{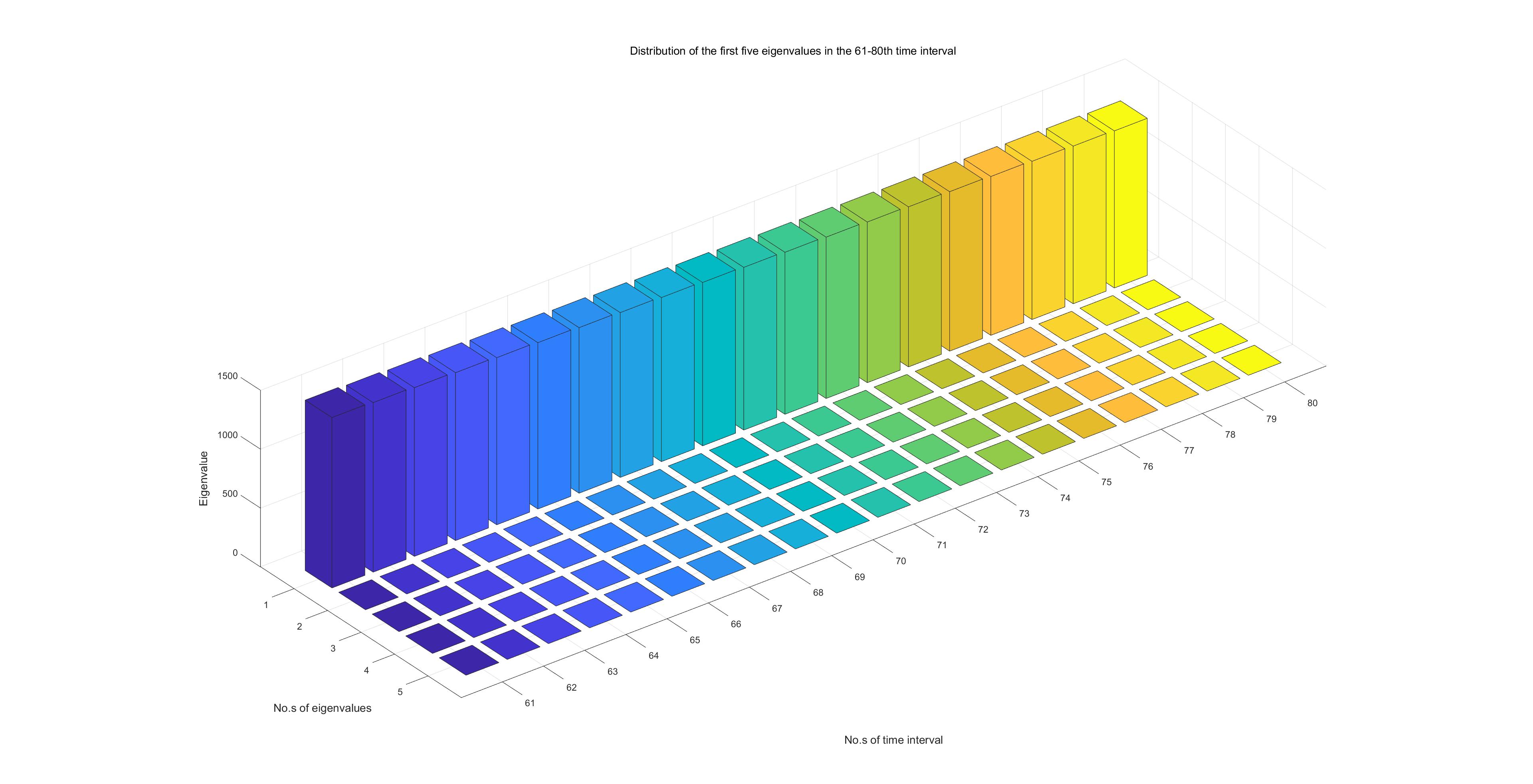}}

     \subfigure[  The 81th-100th time intervals]{\includegraphics[width=5cm, height =3cm]{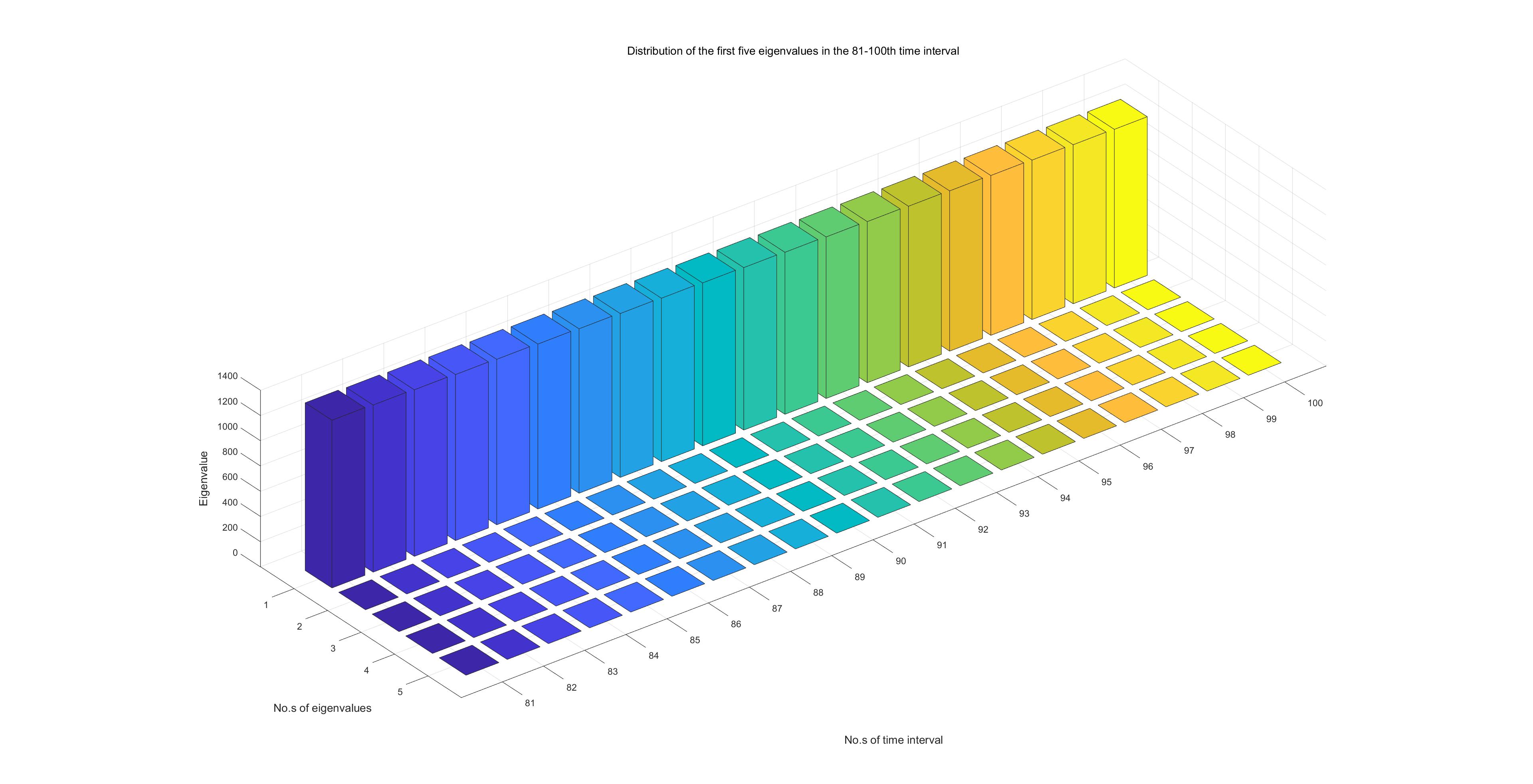}}
    \caption{The first 5 eigenvalues of $\mathfrak{U}_{i}^\top \mathfrak{U}_{i}$ (from big to small) for S2 with $f=10$.}\label{2D-eigenvaluechanges2f10}
\end{figure}
\begin{figure}[htbp]
    \centering
   \subfigure[The 1th-20th time intervals]{\includegraphics[width=5cm, height =3cm]{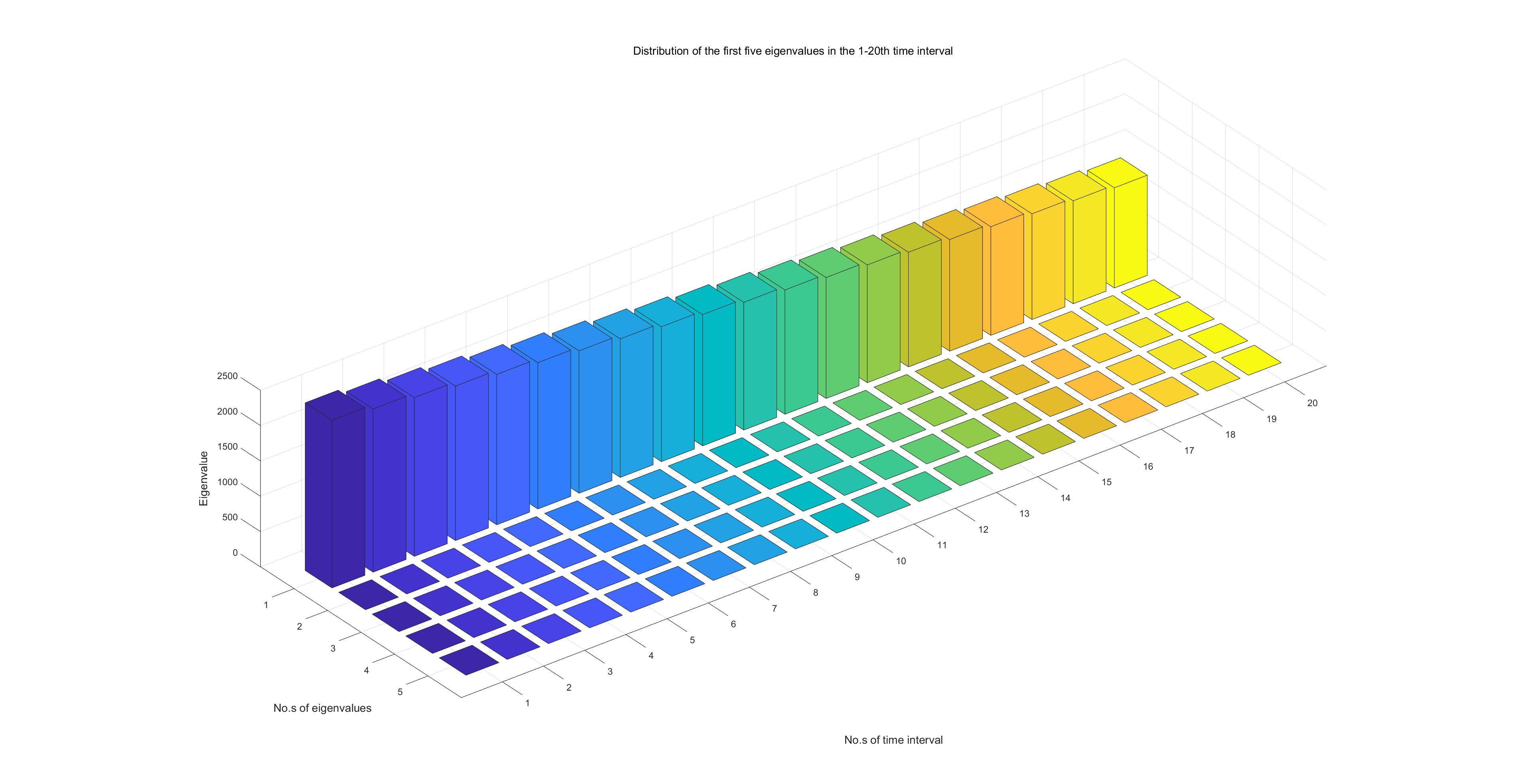}} 
   \subfigure[ The 21th-40th time intervals]{\includegraphics[width=5cm, height =3cm]{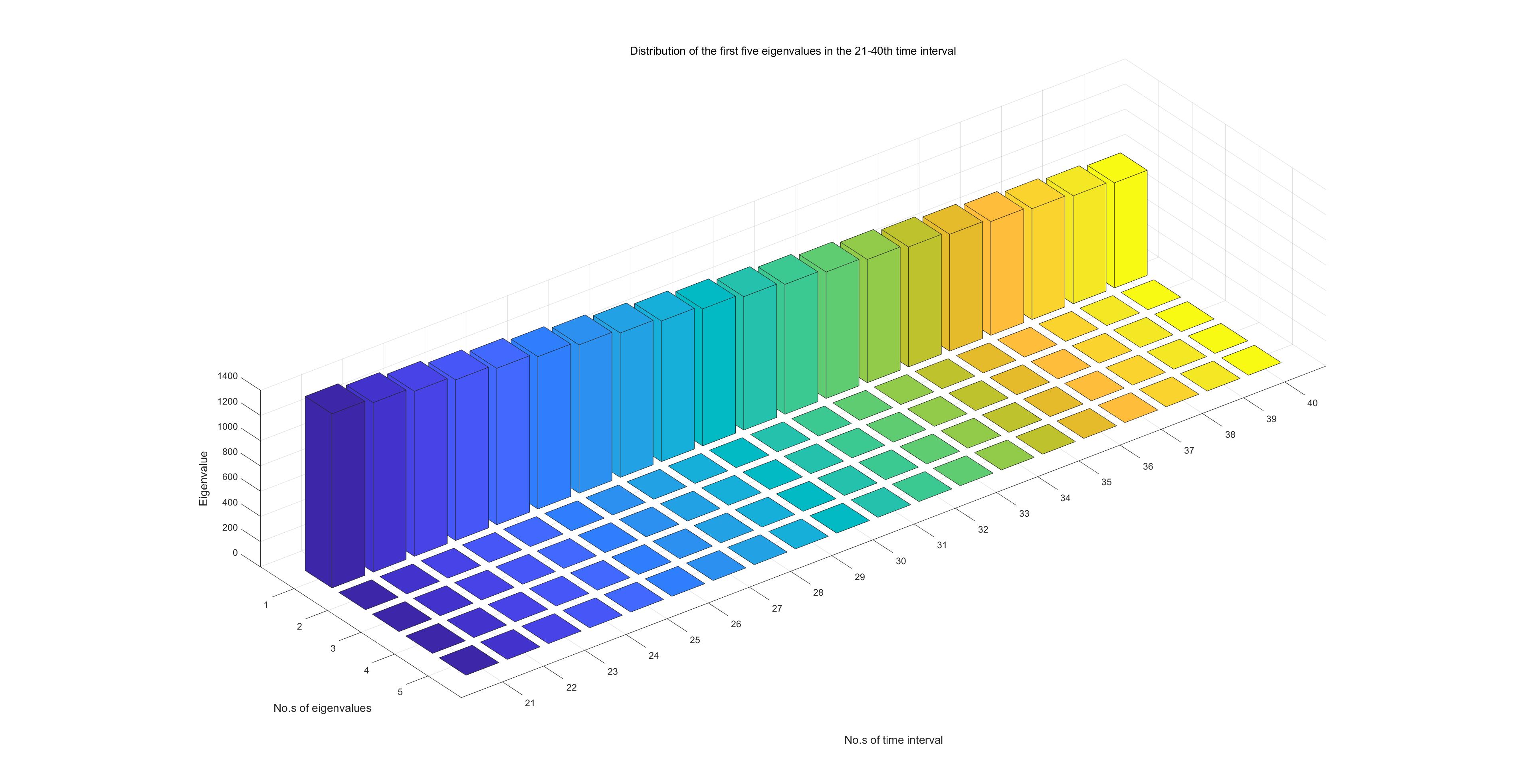}}

    \subfigure[  The 41th-60th time intervals]{\includegraphics[width=5cm, height =3cm]{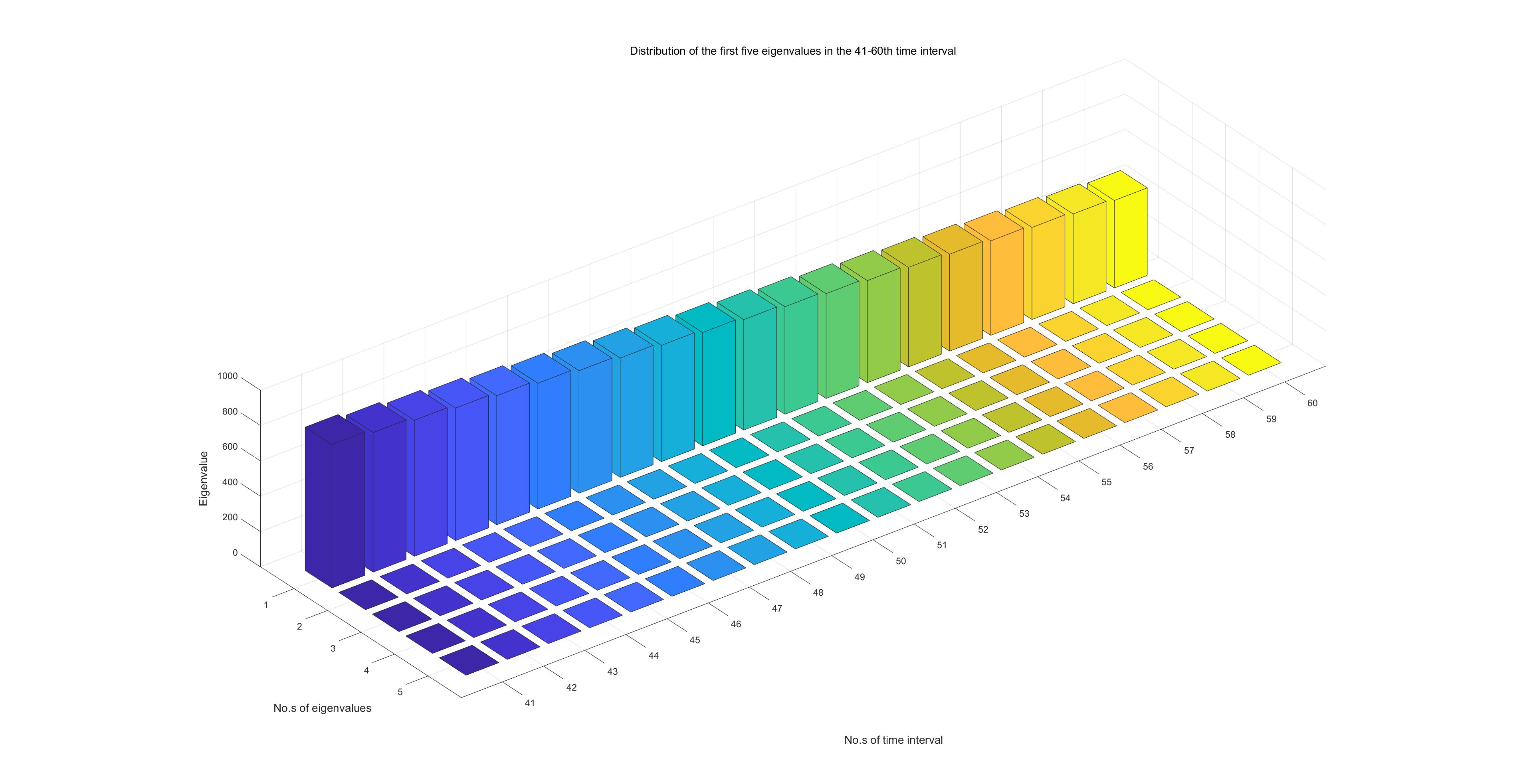}}
    \subfigure[  The 61th-80th time intervals]{\includegraphics[width=5cm, height =3cm]{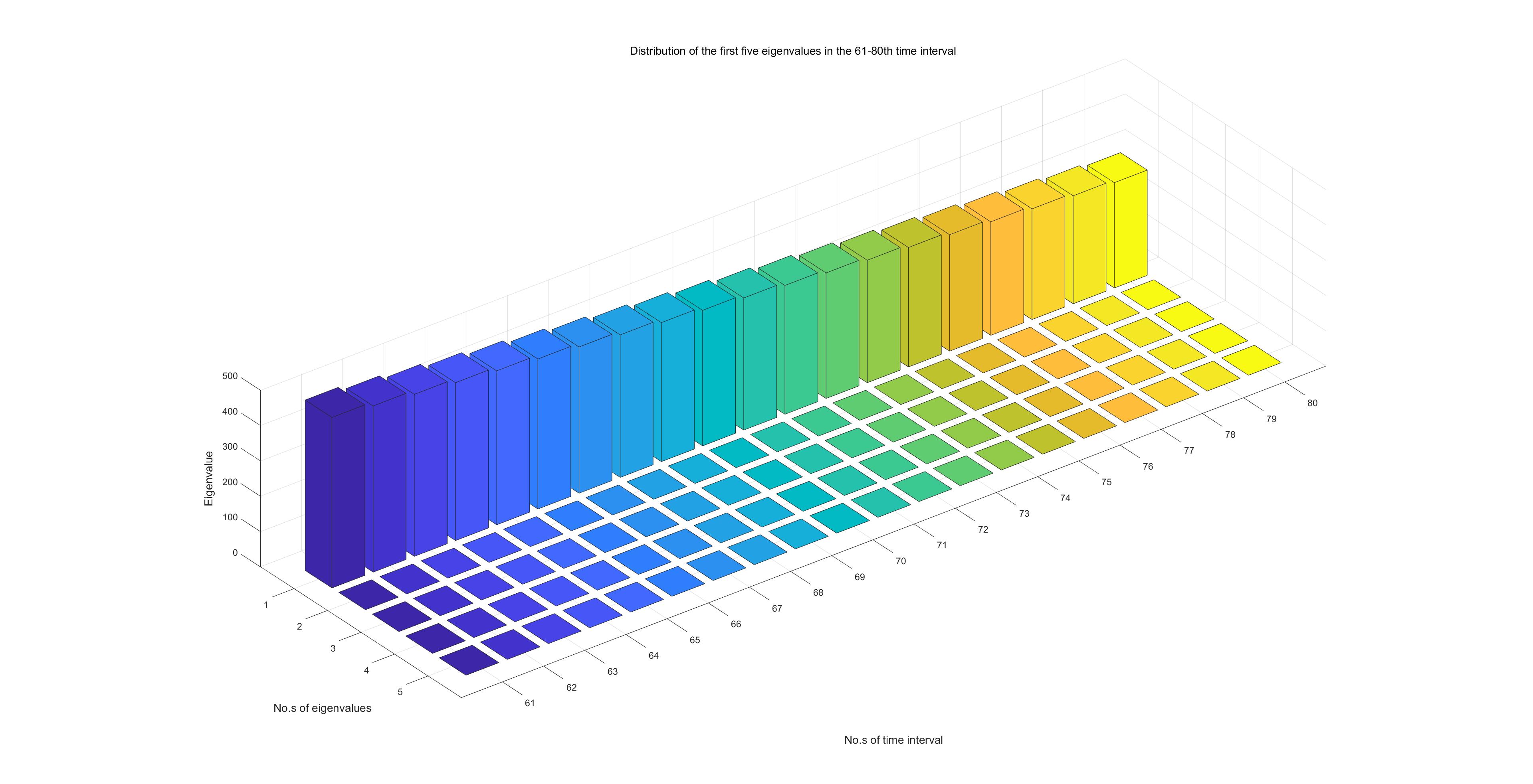}}

     \subfigure[  The 81th-100th time intervals]{\includegraphics[width=5cm, height =3cm]{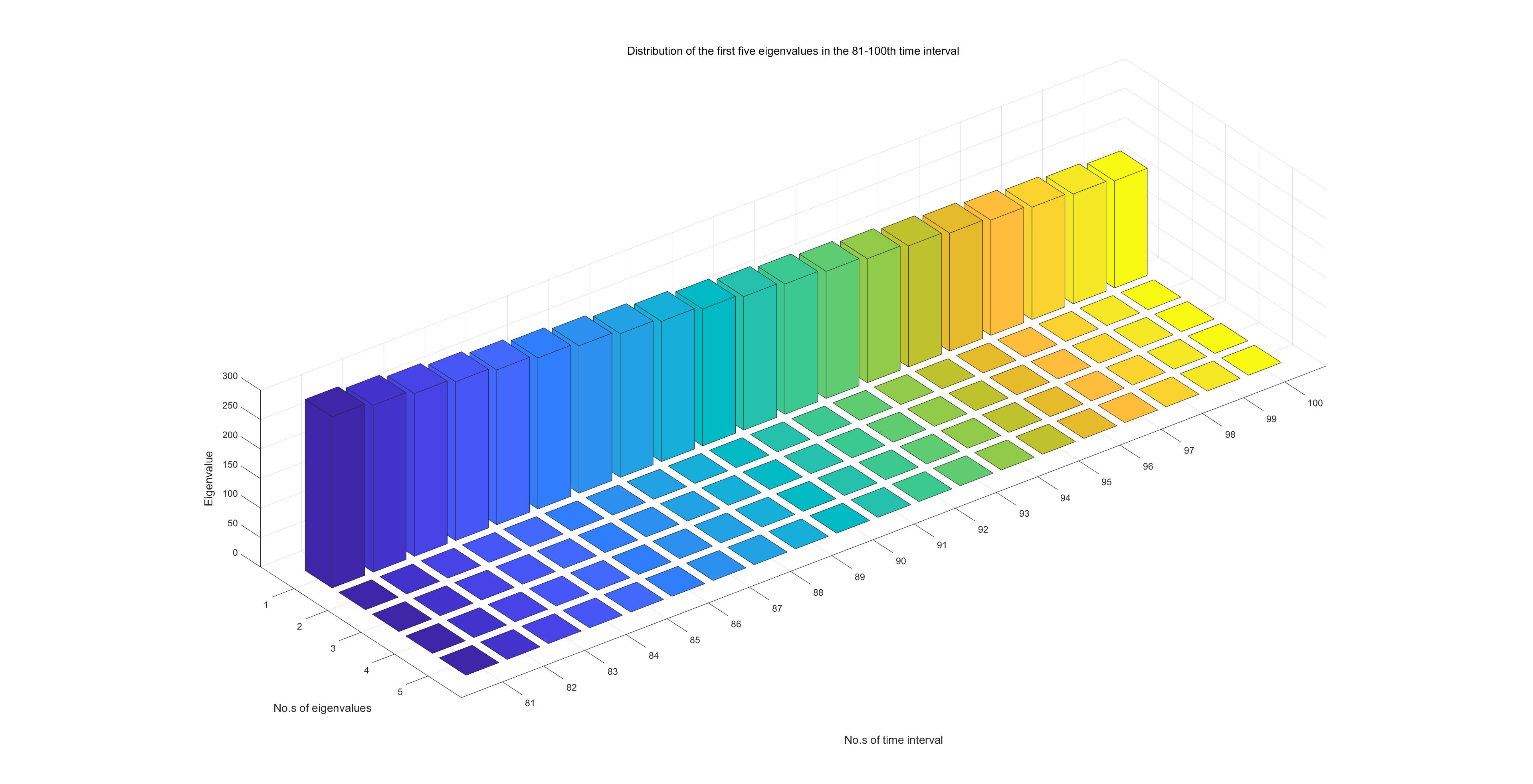}}
    \caption{The first 5 eigenvalues of $\mathfrak{U}_{i}^\top \mathfrak{U}_{i}$ (from big to small) for S2 with $f=xy$.}\label{2D-eigenvaluechanges2fxy}
\end{figure}
\begin{figure}[htbp]
    \centering
    \includegraphics[width=9cm, height =5cm]{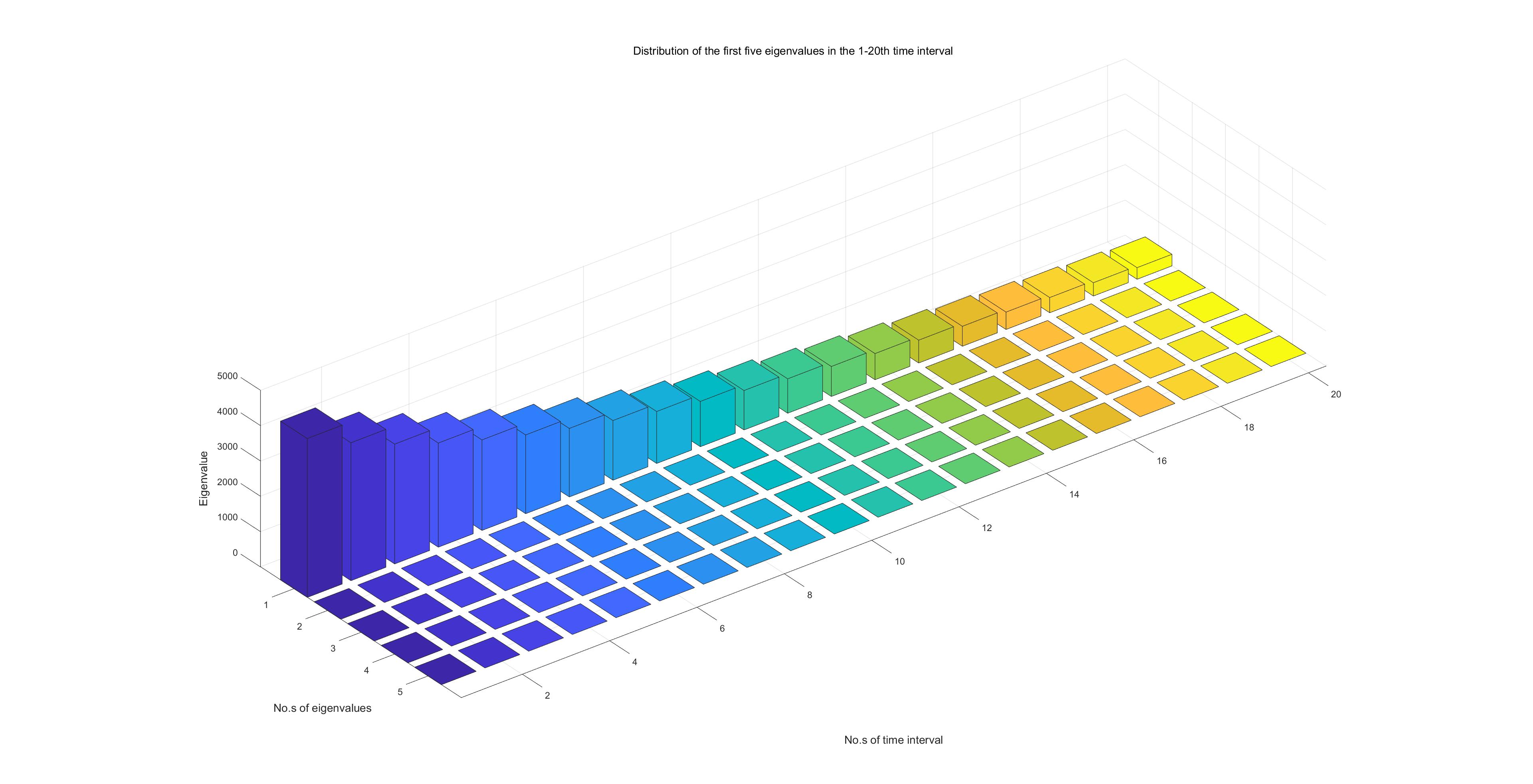}
    \caption{The first 5 eigenvalues of $\mathfrak{U}_{i}^\top \mathfrak{U}_{i}$ (from big to small) for S3 with $f=0$.}\label{2D-eigenvaluechanges3f0}
\end{figure}

\begin{figure}[htbp]
    \centering
   \subfigure[  $t = 0.25$]{\includegraphics[width=4cm, height =3cm]{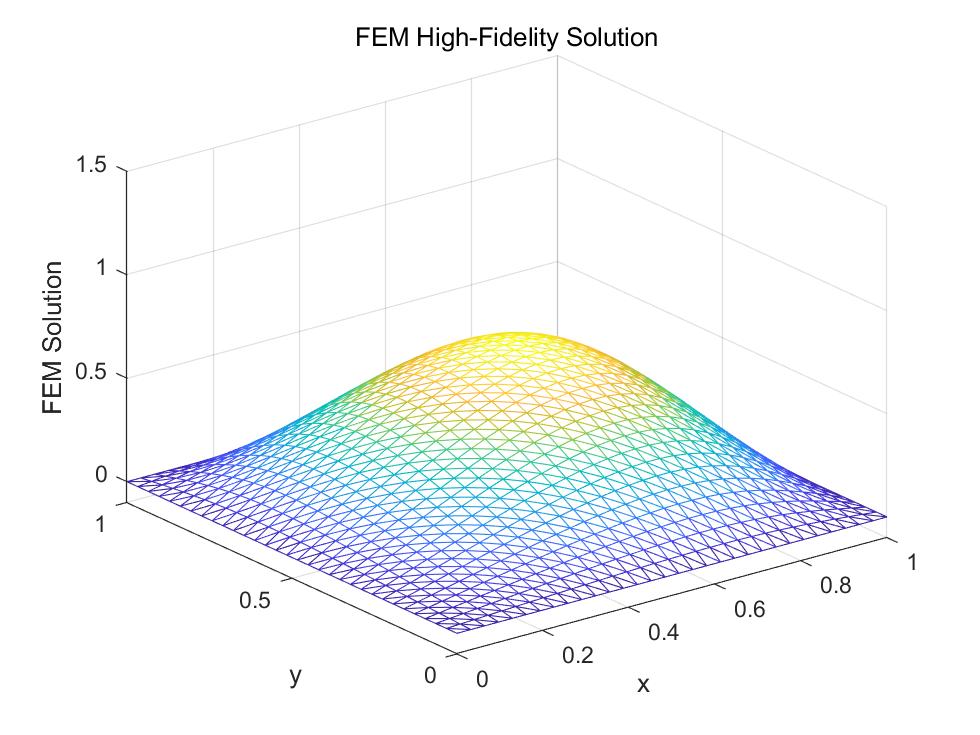}}
   \subfigure[  $t = 0.50$]{\includegraphics[width=4cm, height =3cm]{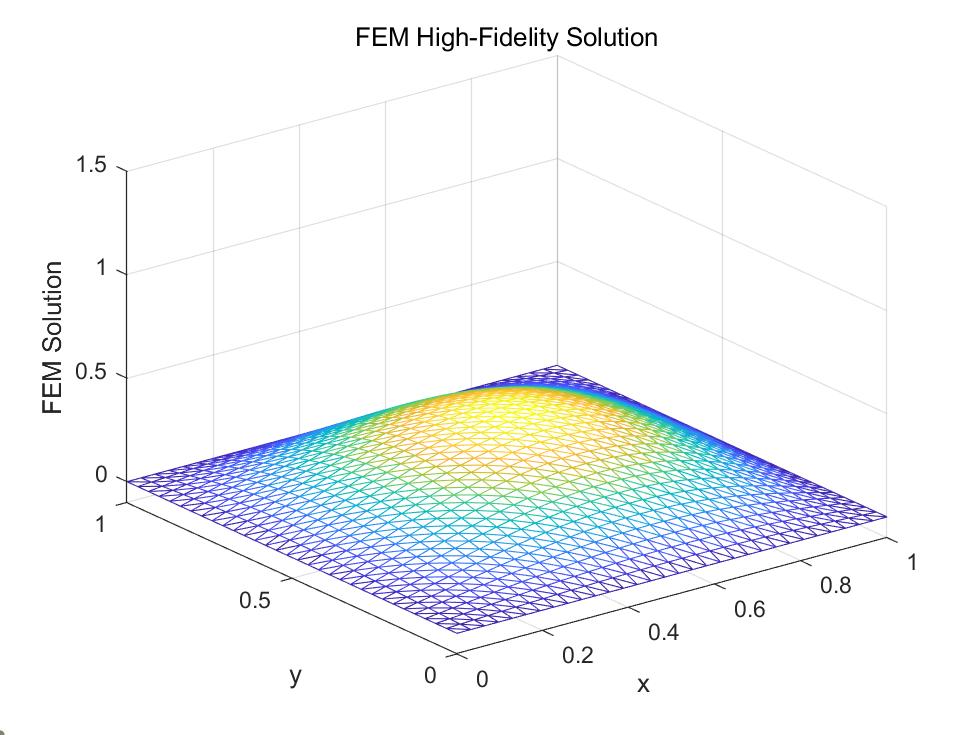}}
   
   \subfigure[  $t = 0.75$]{\includegraphics[width=4cm, height =3cm]{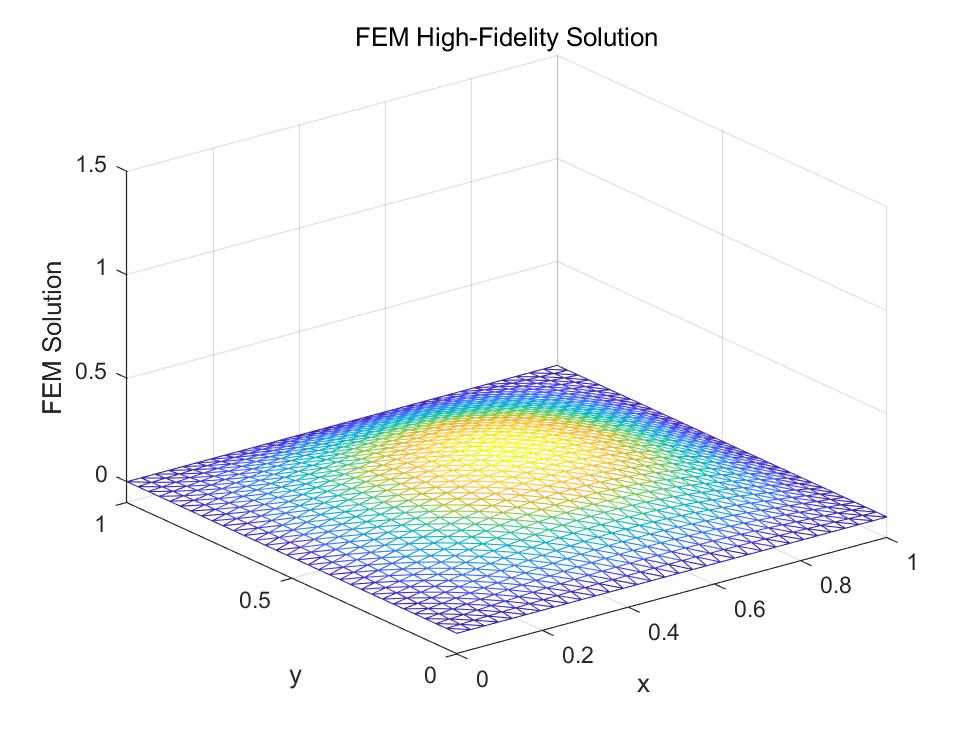}}
    \subfigure[ $t =1.00$]{\includegraphics[width=4cm, height =3cm]{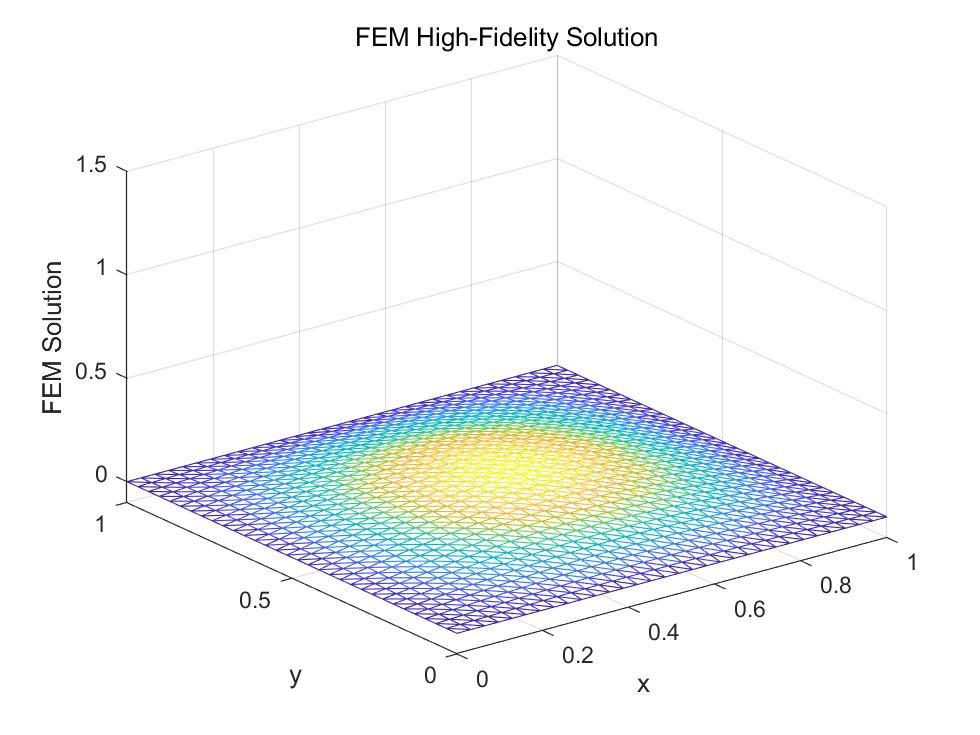}}\\
    High-fidelity numerical solution\\
    
     \subfigure[ $t =0.25$]{\includegraphics[width=4cm, height =3cm]{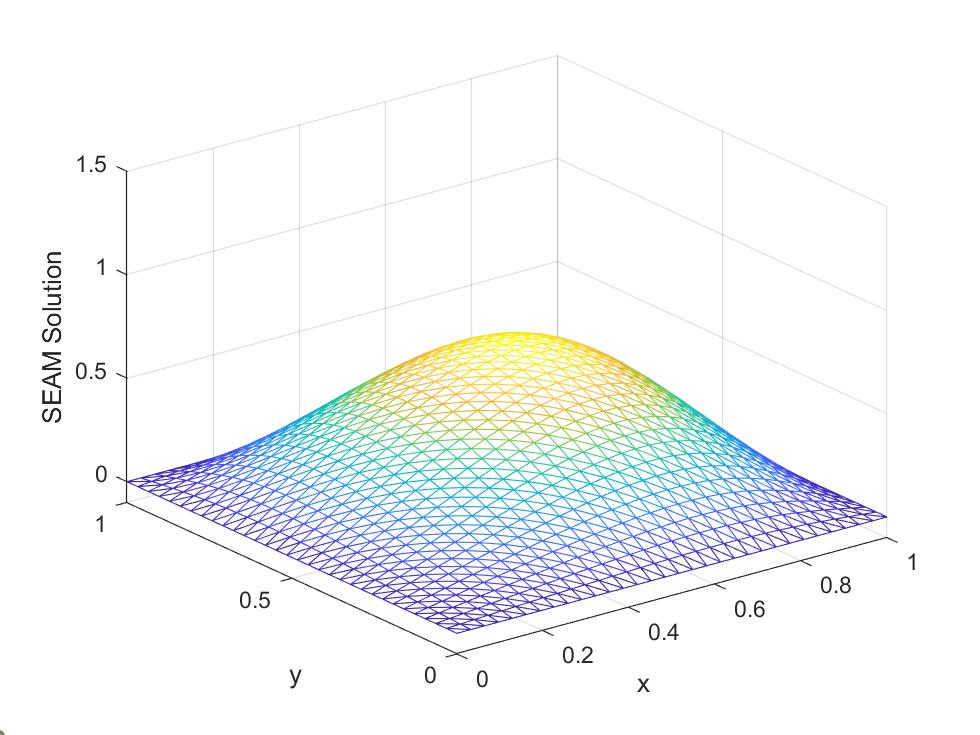}}
   \subfigure[ $t =0.50$]{\includegraphics[width=4cm, height =3cm]{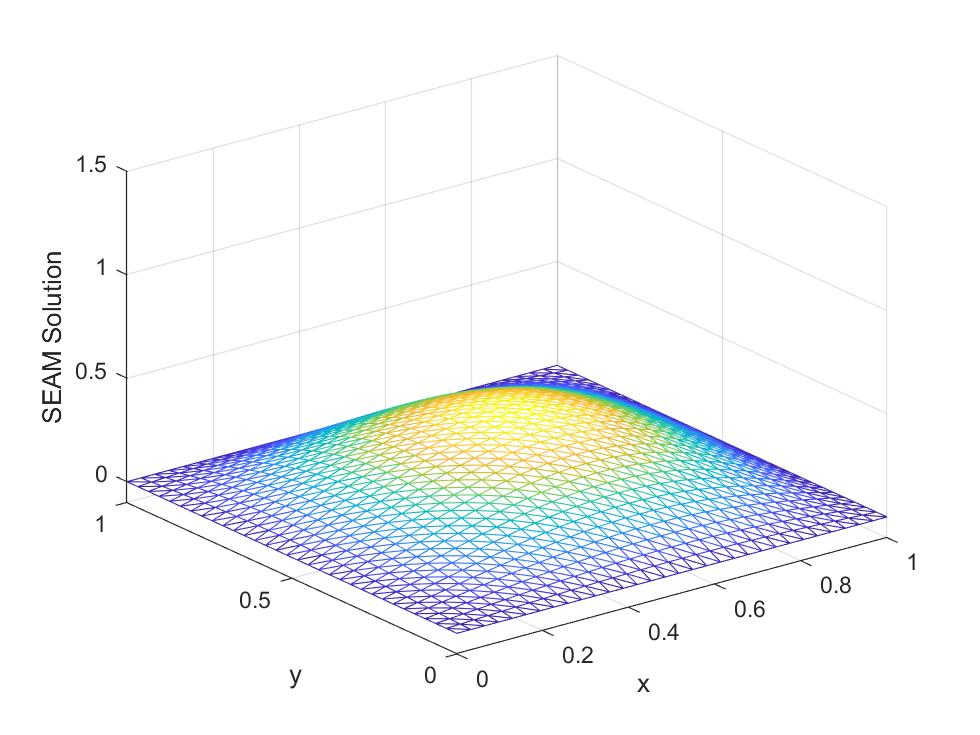}}
   
   \subfigure[  $t =0.75$]{\includegraphics[width=4cm, height =3cm]{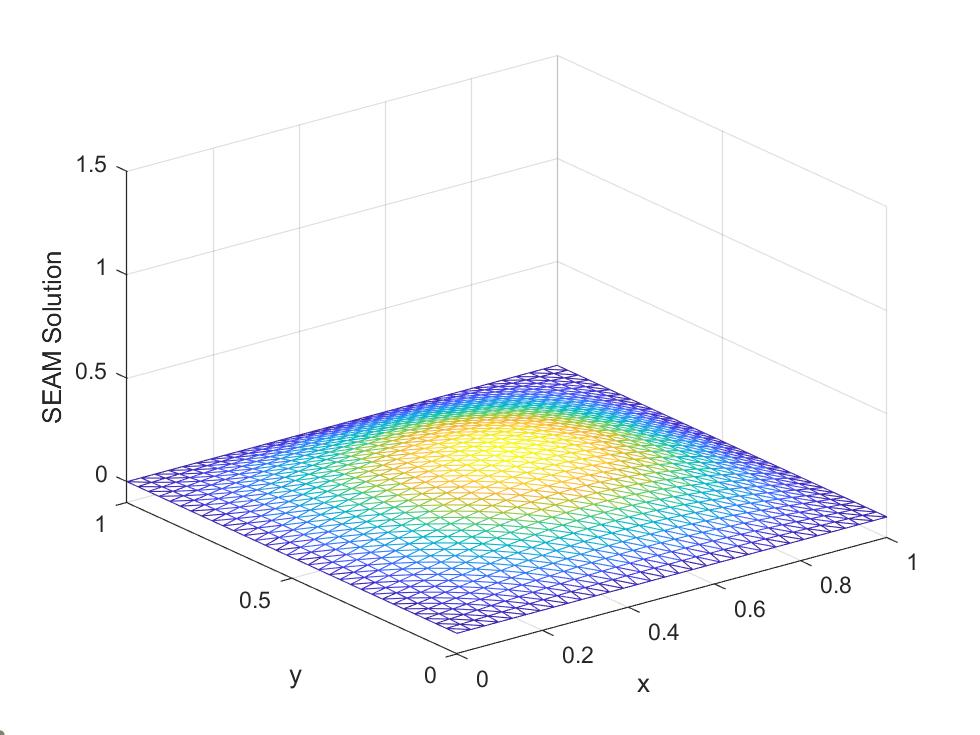}}
    \subfigure[ $t =1.00$]{\includegraphics[width=4cm, height =3cm]{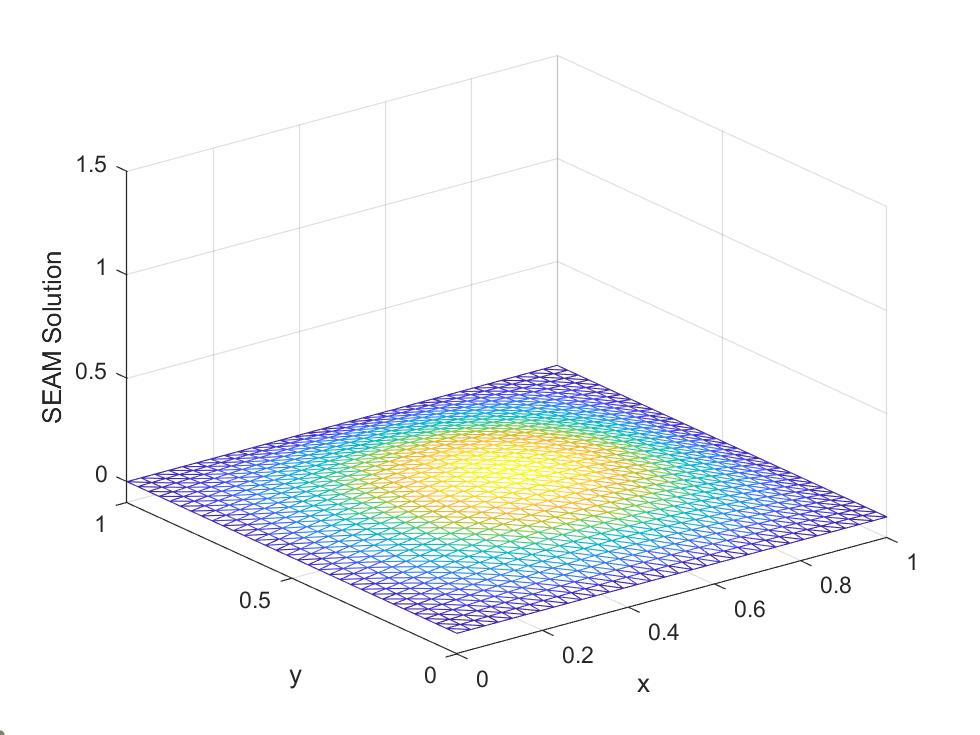}}\\
    SEAM solution\\
    \caption{High-fidelity / SEAM solutions of (\ref{nml2}) at different time for S1 with $f = 0$.}\label{seams1f0}
\end{figure}
\begin{figure}[htbp]
    \centering
   \subfigure[  $t = 0.25$]{\includegraphics[width=4cm, height =3cm]{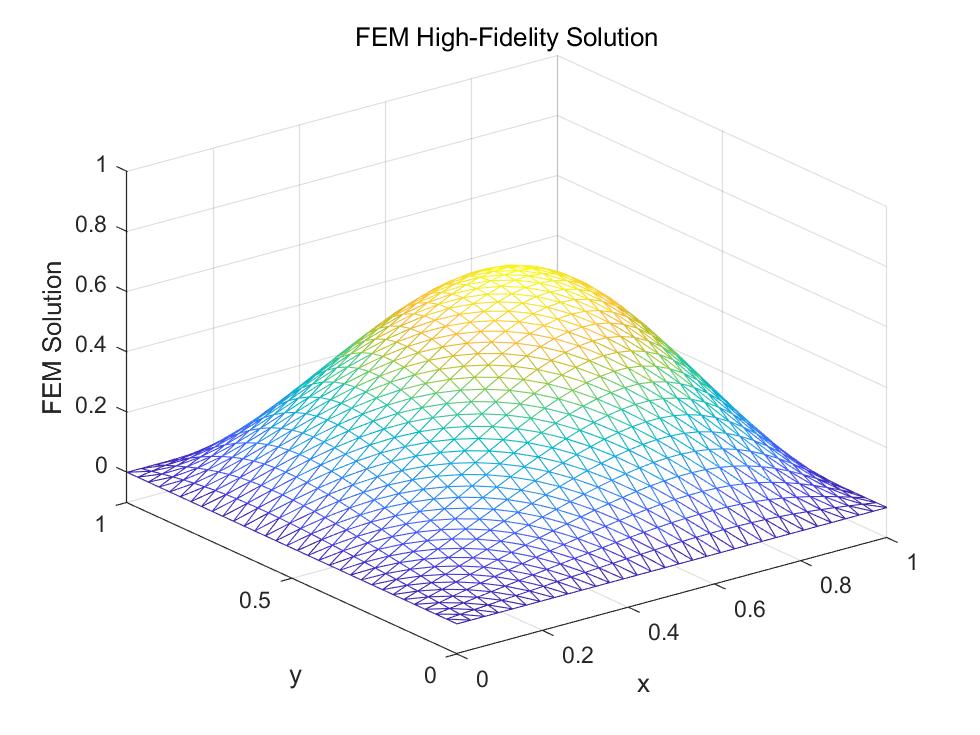}}
   \subfigure[  $t = 0.50$]{\includegraphics[width=4cm, height =3cm]{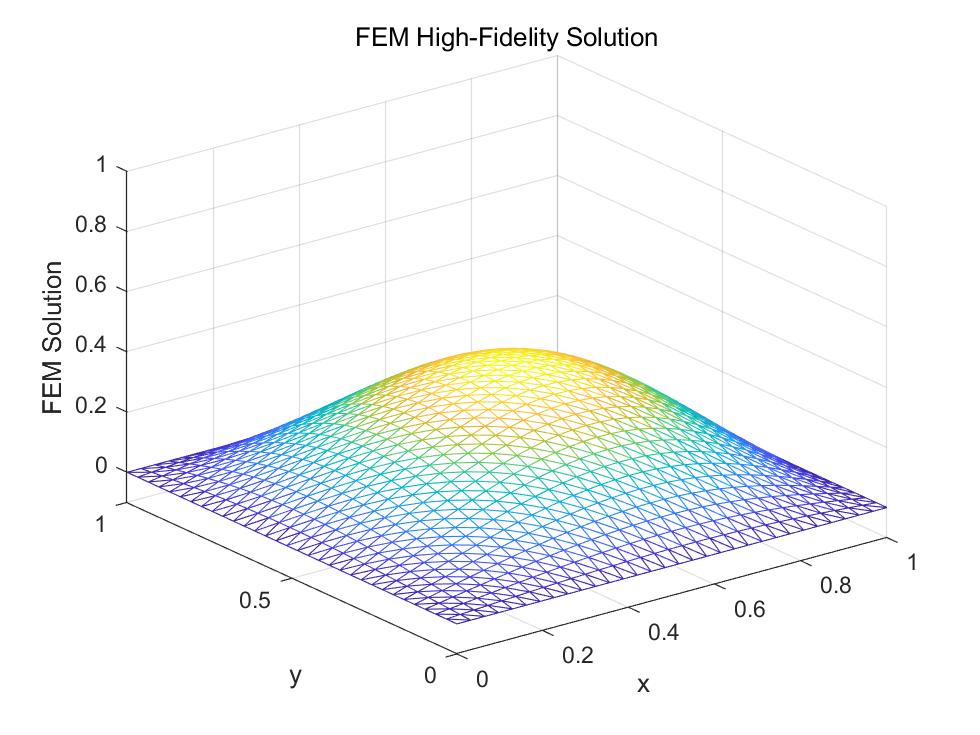}}
   
   \subfigure[  $t = 0.75$]{\includegraphics[width=4cm, height =3cm]{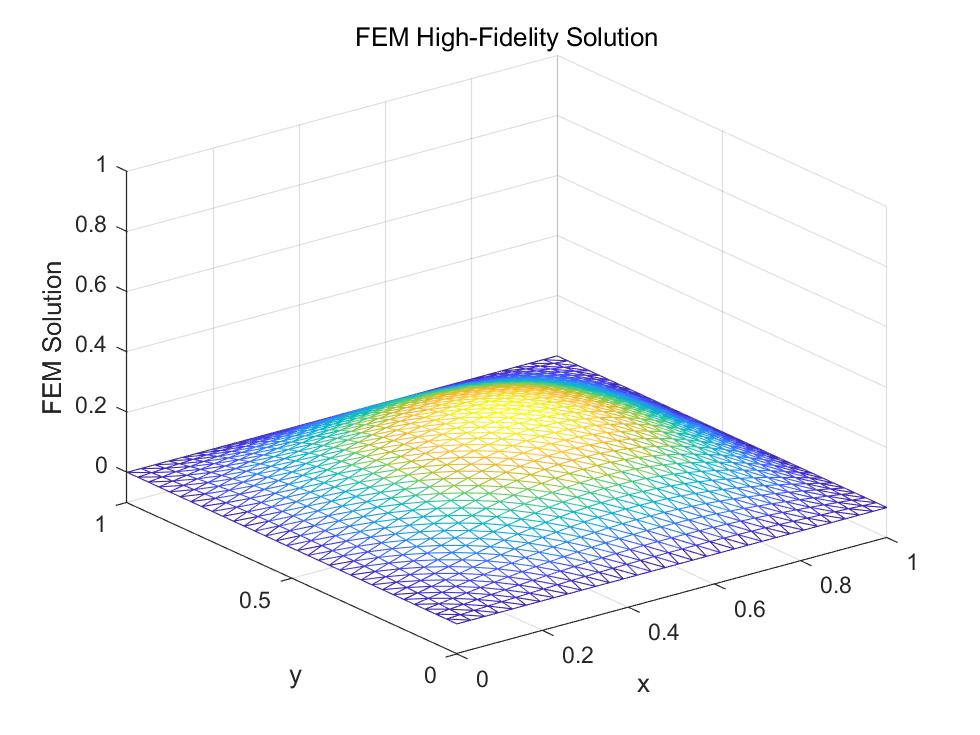}}
    \subfigure[ $t =1.00$]{\includegraphics[width=4cm, height =3cm]{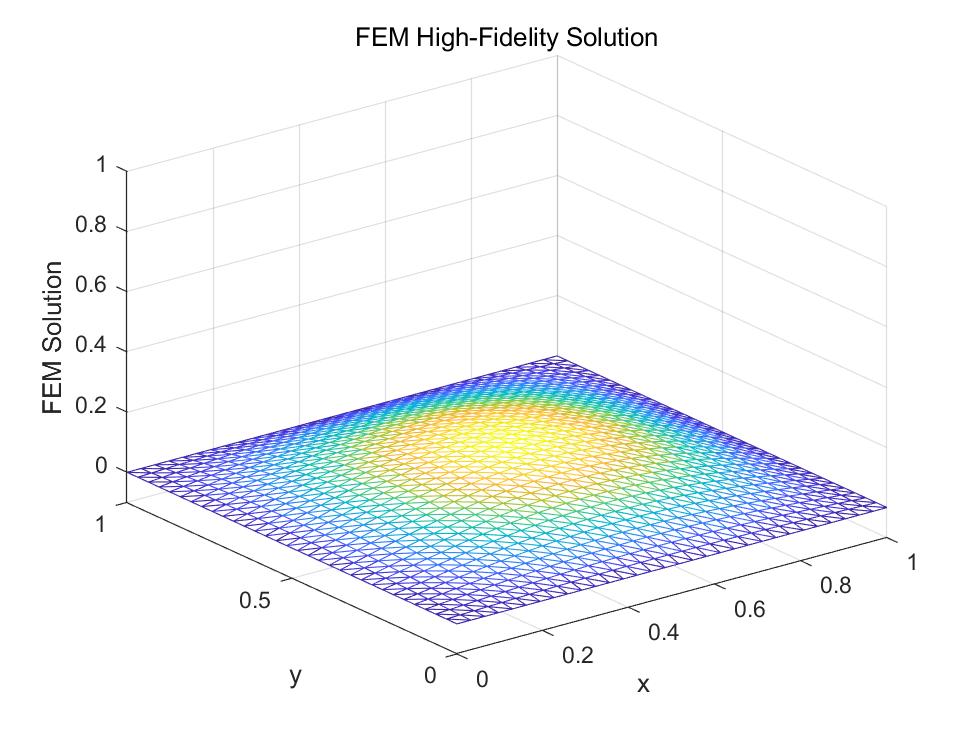}}\\
    High-fidelity numerical solution\\
    
     \subfigure[ $t =0.25$]{\includegraphics[width=4cm, height =3cm]{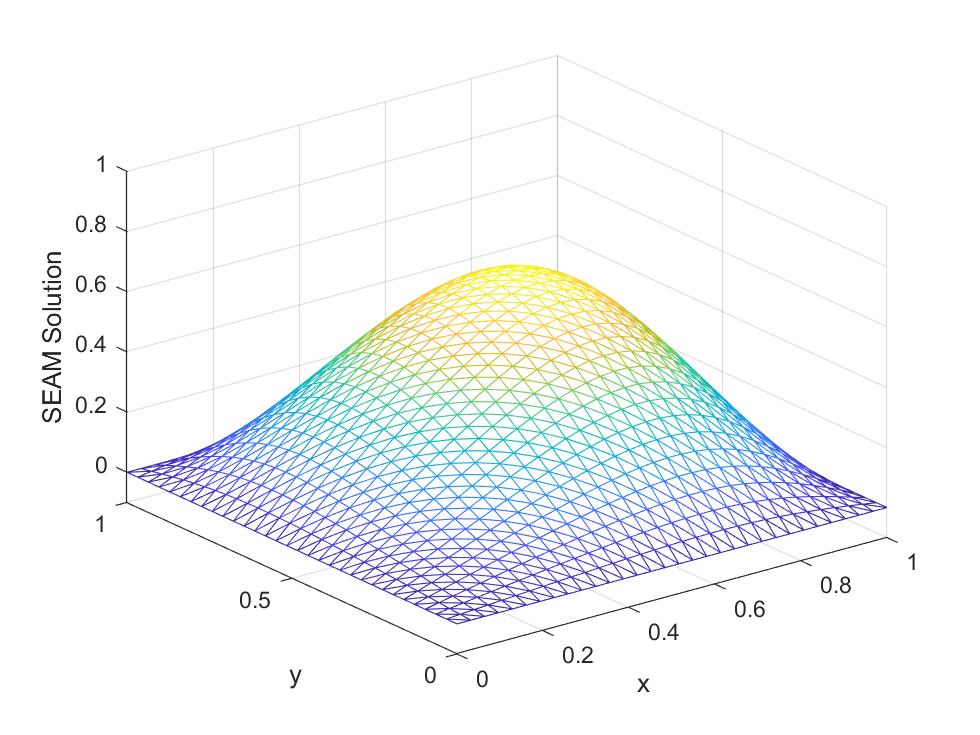}}
   \subfigure[ $t =0.50$]{\includegraphics[width=4cm, height =3cm]{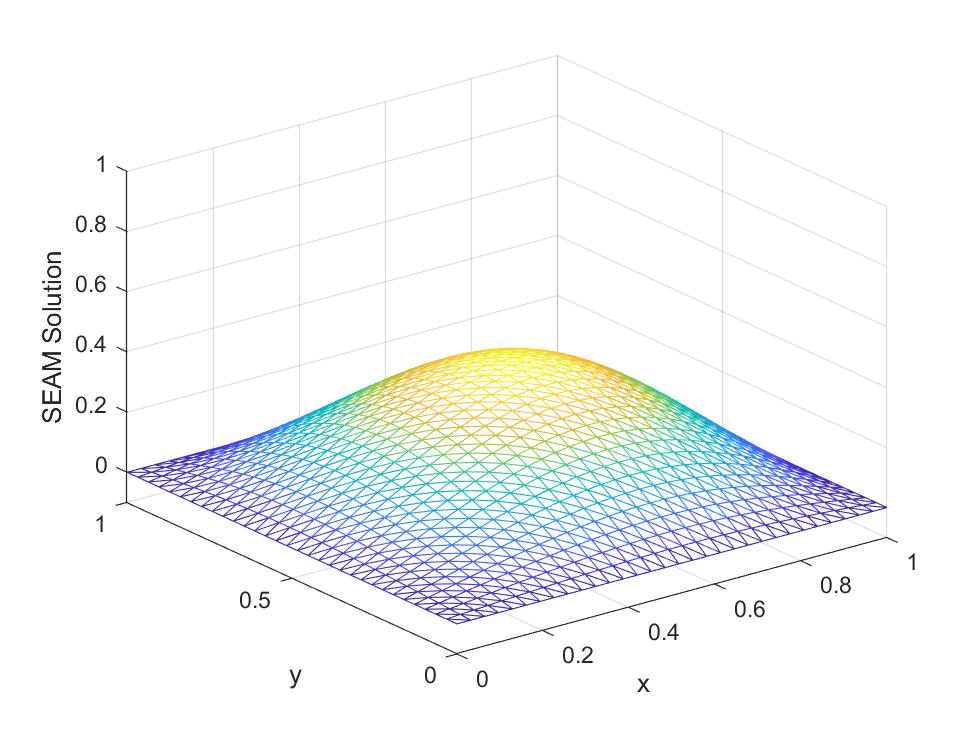}}
   
   \subfigure[  $t =0.75$]{\includegraphics[width=4cm, height =3cm]{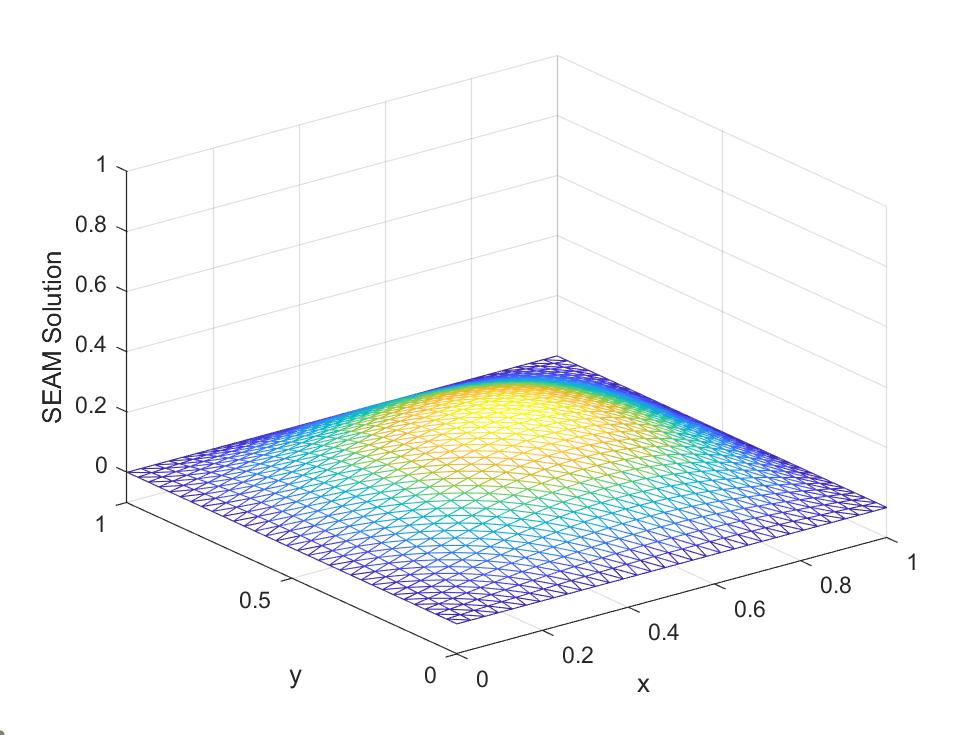}}
    \subfigure[ $t =1.00$]{\includegraphics[width=4cm, height =3cm]{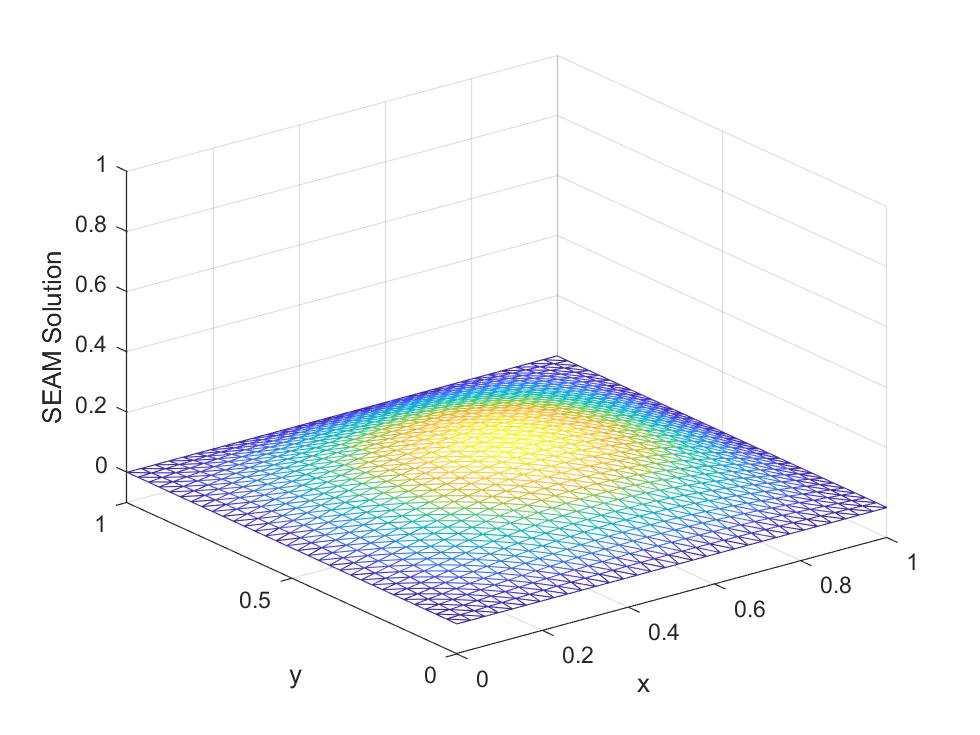}}\\
    SEAM solution\\
    \caption{High-fidelity / SEAM solutions of (\ref{nml2}) at different time for S1 with $f = xy$.}\label{seams1fxy}
\end{figure}
\begin{figure}[htbp]
    \centering
   \subfigure[  $t = 0.25$]{\includegraphics[width=4cm, height =3cm]{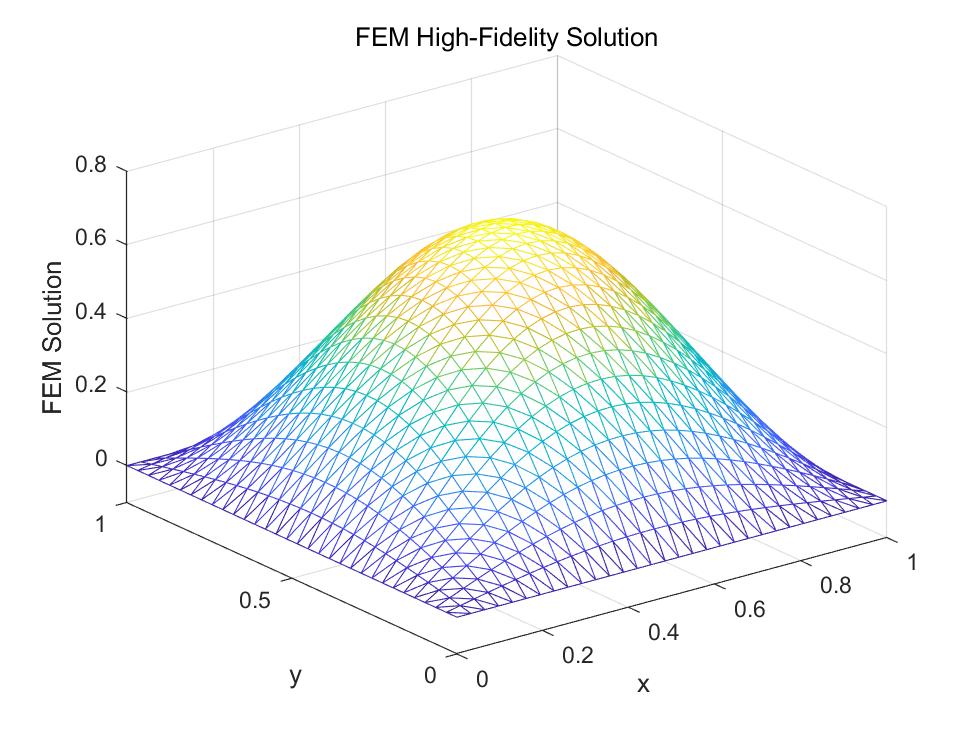}}
   \subfigure[  $t = 0.50$]{\includegraphics[width=4cm, height =3cm]{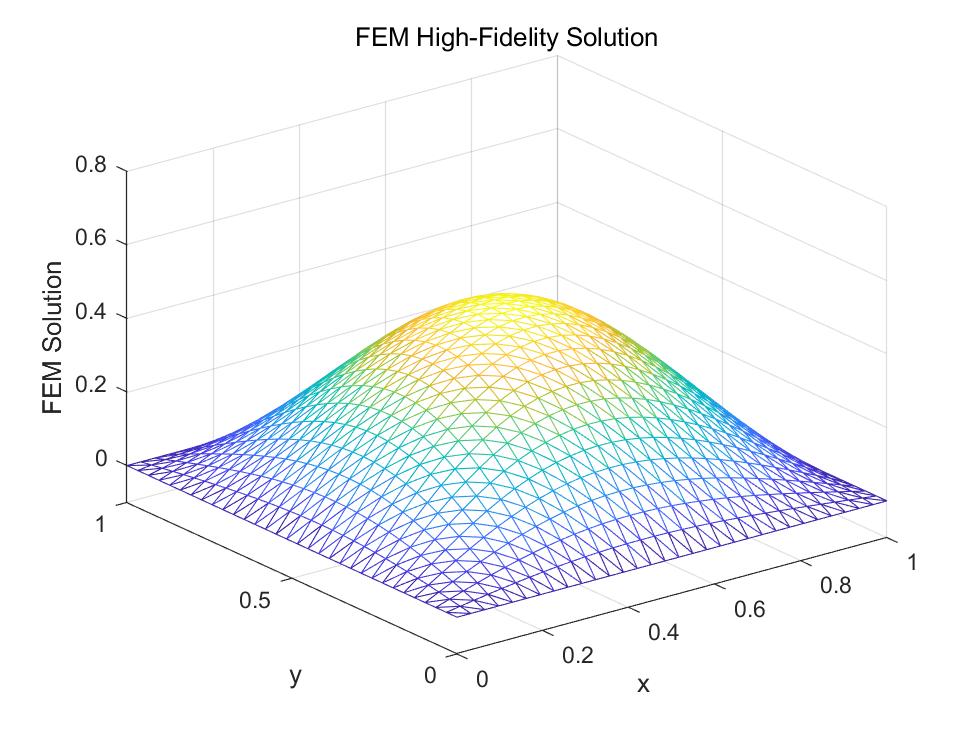}}
   
   \subfigure[  $t = 0.75$]{\includegraphics[width=4cm, height =3cm]{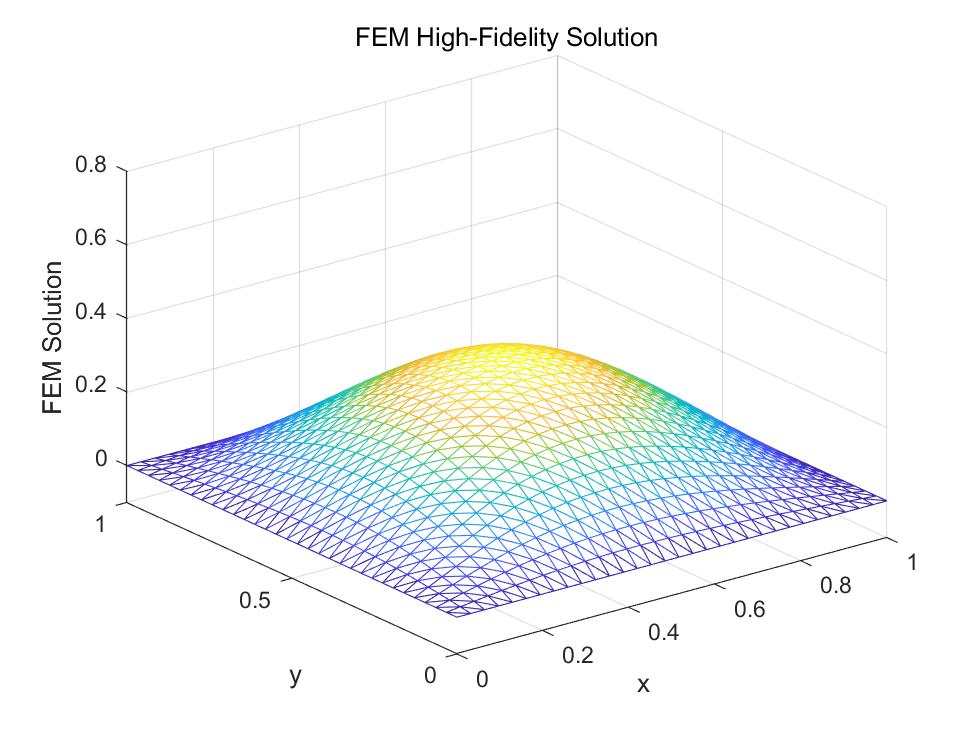}}
    \subfigure[ $t =1.00$]{\includegraphics[width=4cm, height =3cm]{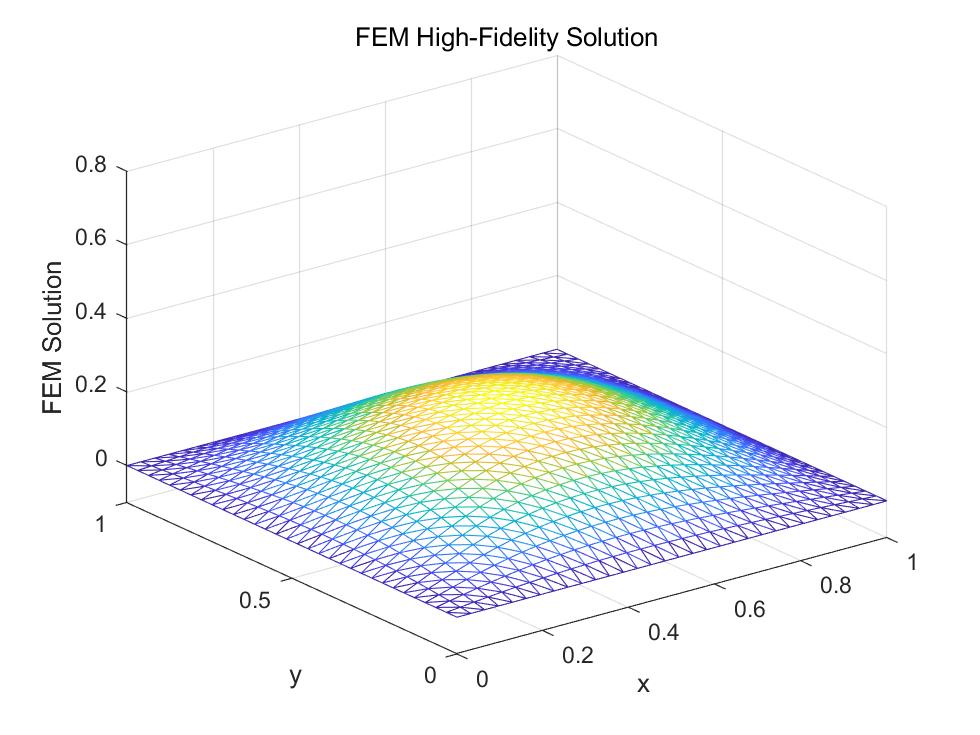}}\\
    High-fidelity numerical solution\\
    
     \subfigure[ $t =0.25$]{\includegraphics[width=4cm, height =3cm]{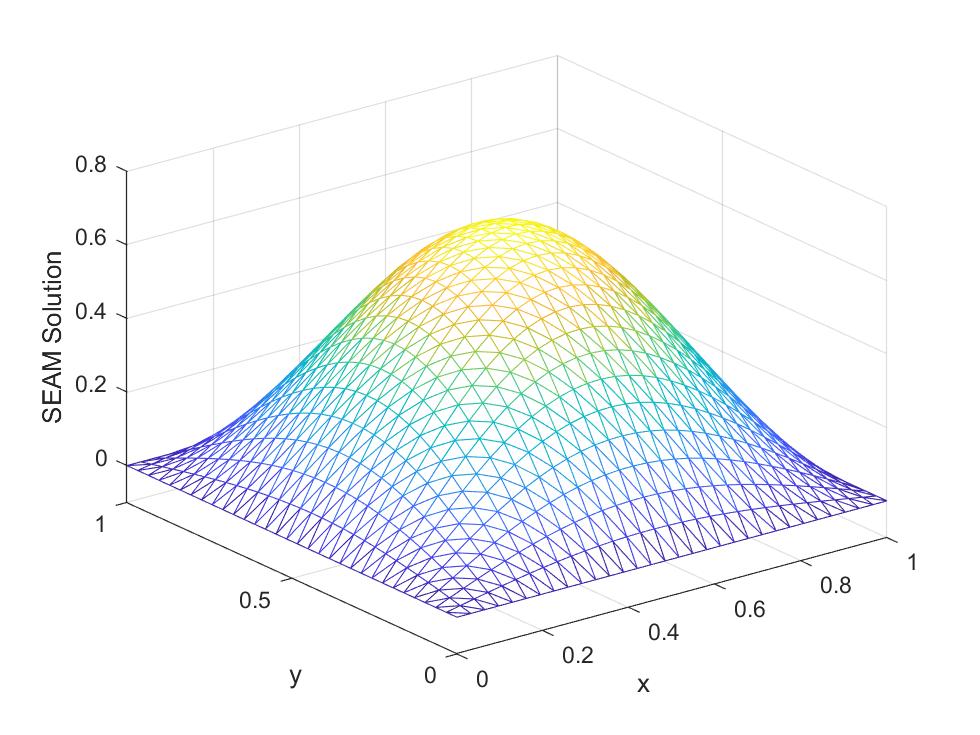}}
   \subfigure[ $t =0.50$]{\includegraphics[width=4cm, height =3cm]{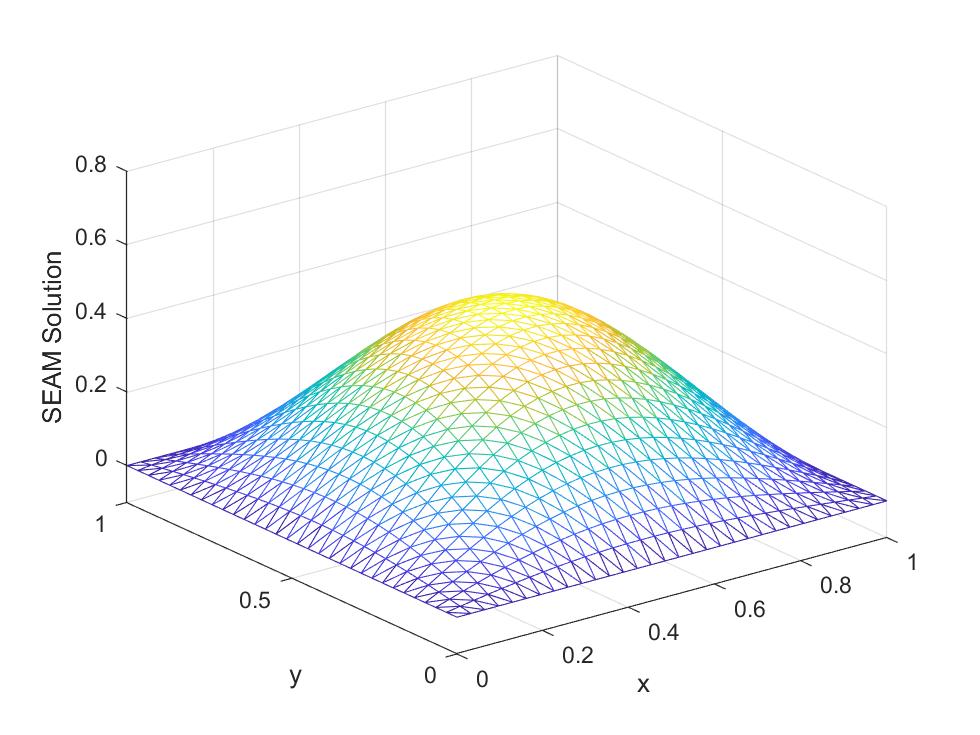}}
   
   \subfigure[  $t =0.75$]{\includegraphics[width=4cm, height =3cm]{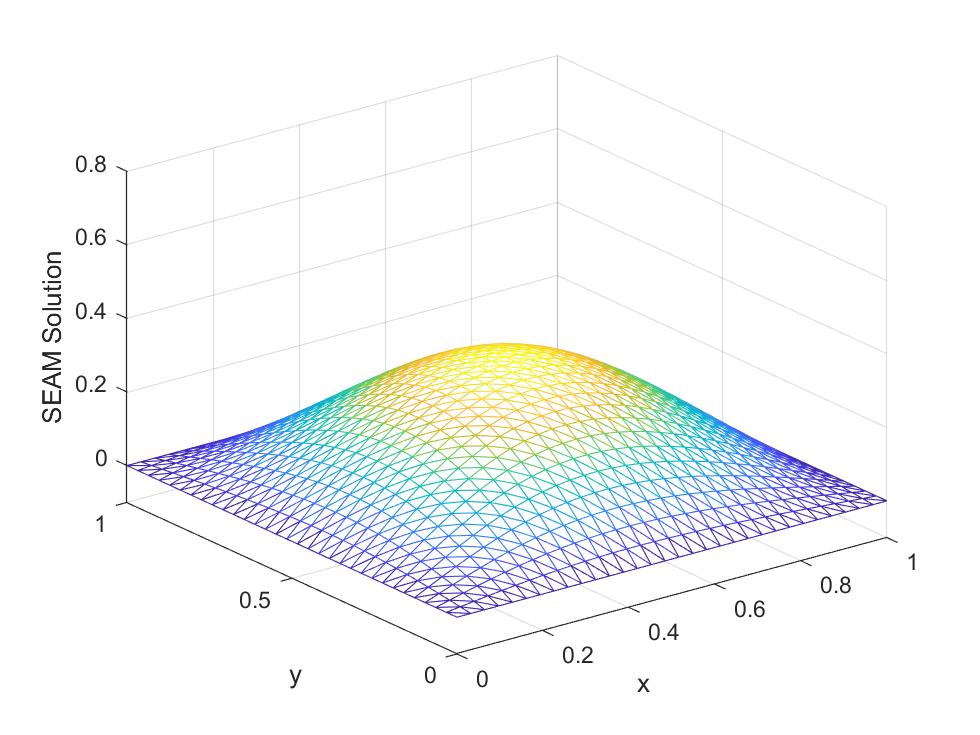}}
    \subfigure[ $t =1.00$]{\includegraphics[width=4cm, height =3cm]{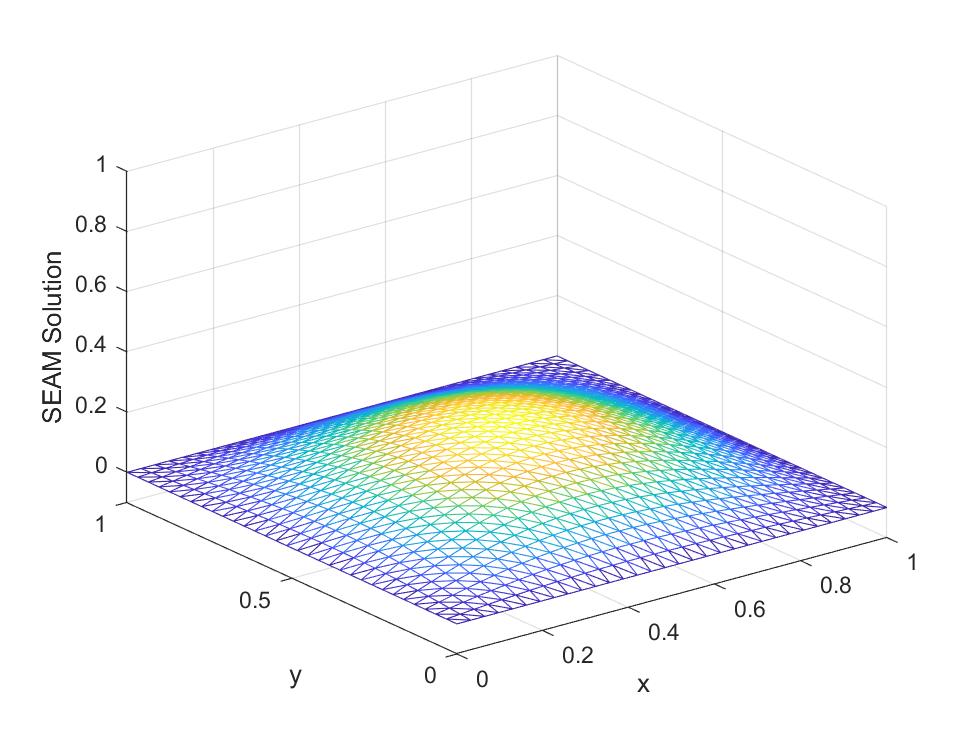}}\\
    SEAM solution\\
    \caption{High-fidelity / SEAM solutions of (\ref{nml2}) at different time   for S2 with $f = 0$.}\label{seams2f0}
\end{figure}
\begin{figure}[htbp]
    \centering
   \subfigure[  $t = 0.25$]{\includegraphics[width=4cm, height =3cm]{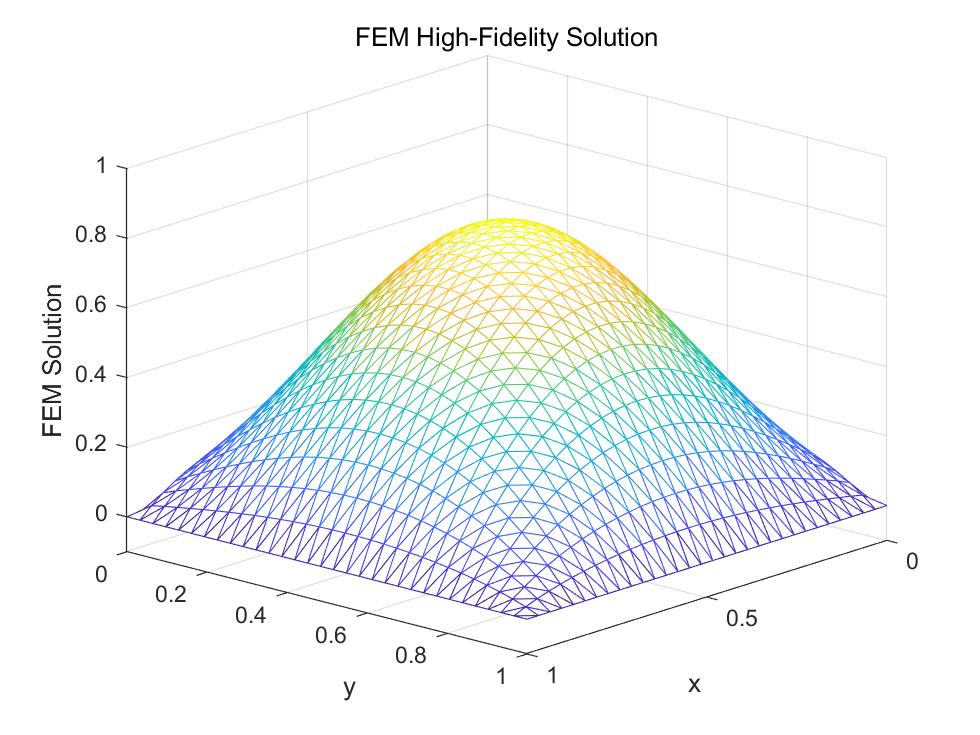}}
   \subfigure[  $t = 0.50$]{\includegraphics[width=4cm, height =3cm]{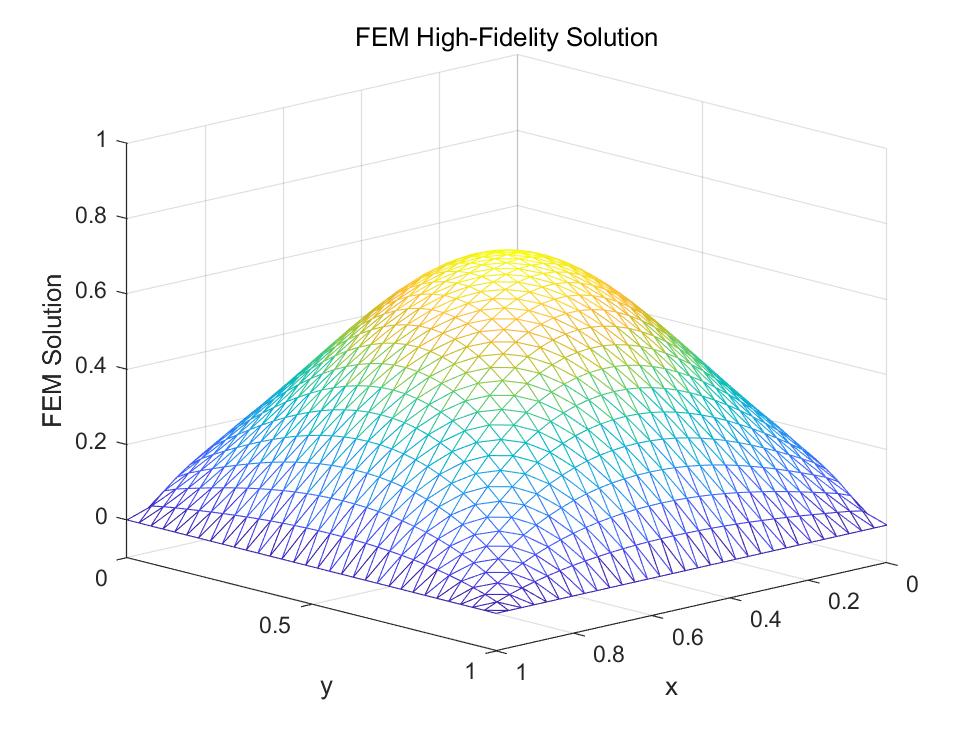}}
   
   \subfigure[  $t = 0.75$]{\includegraphics[width=4cm, height =3cm]{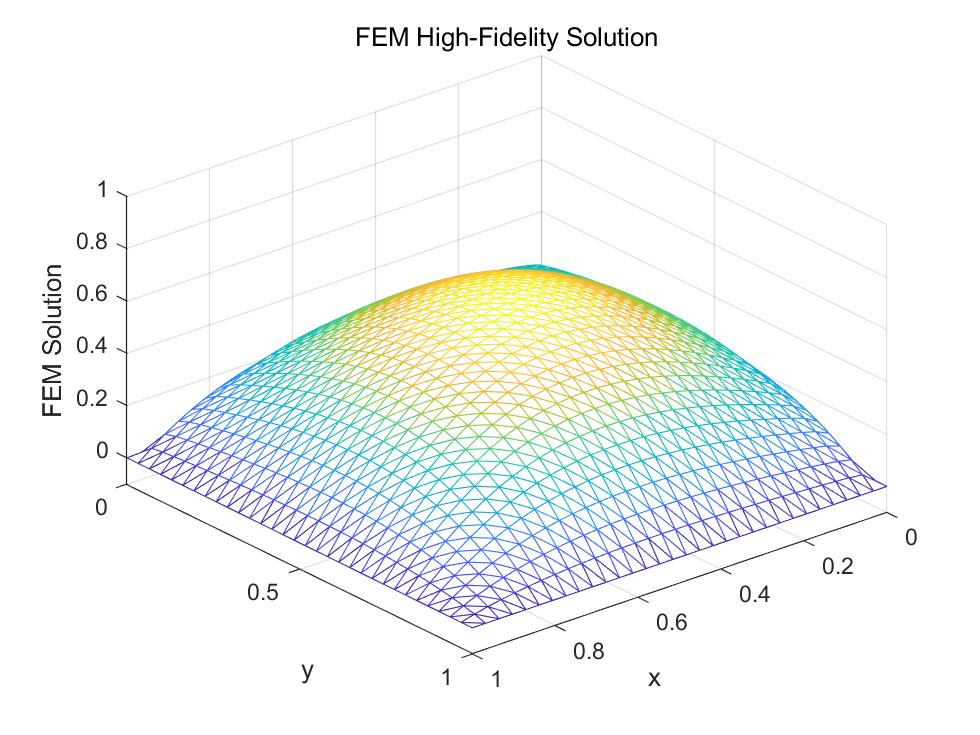}}
    \subfigure[ $t =1.00$]{\includegraphics[width=4cm, height =3cm]{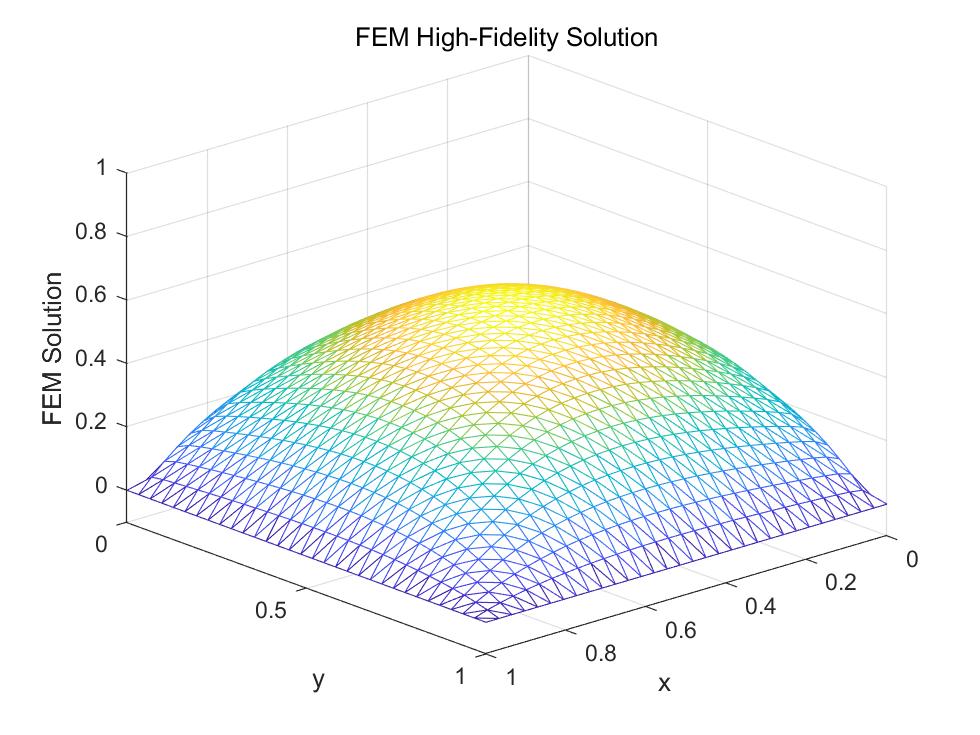}}\\
    High-fidelity numerical solution\\
    
     \subfigure[ $t =0.25$]{\includegraphics[width=4cm, height =3cm]{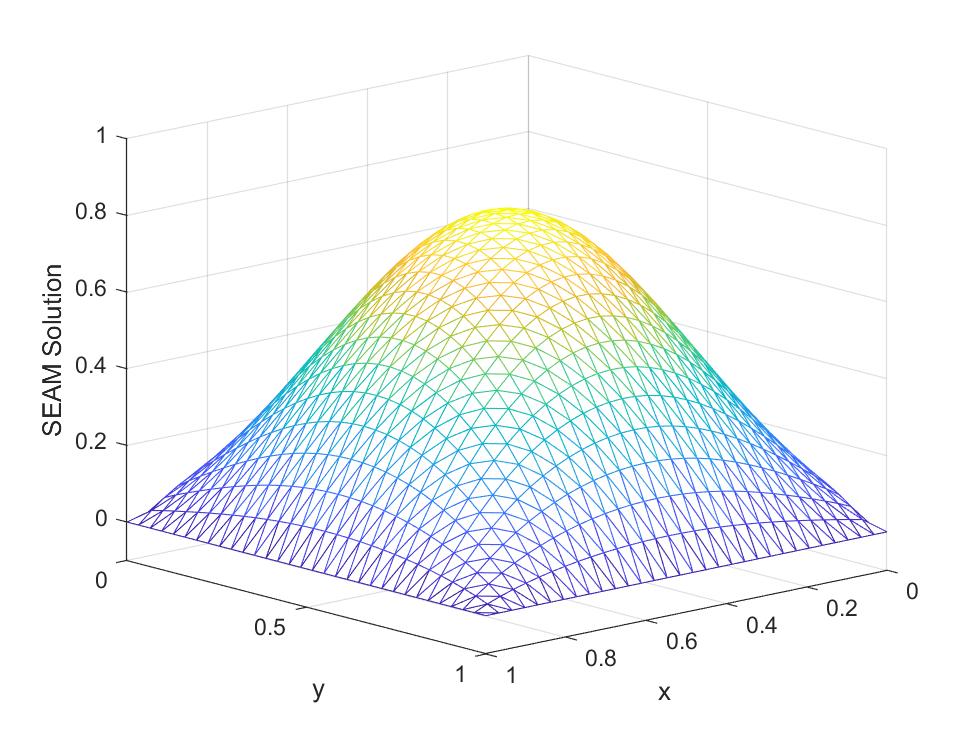}}
   \subfigure[ $t =0.50$]{\includegraphics[width=4cm, height =3cm]{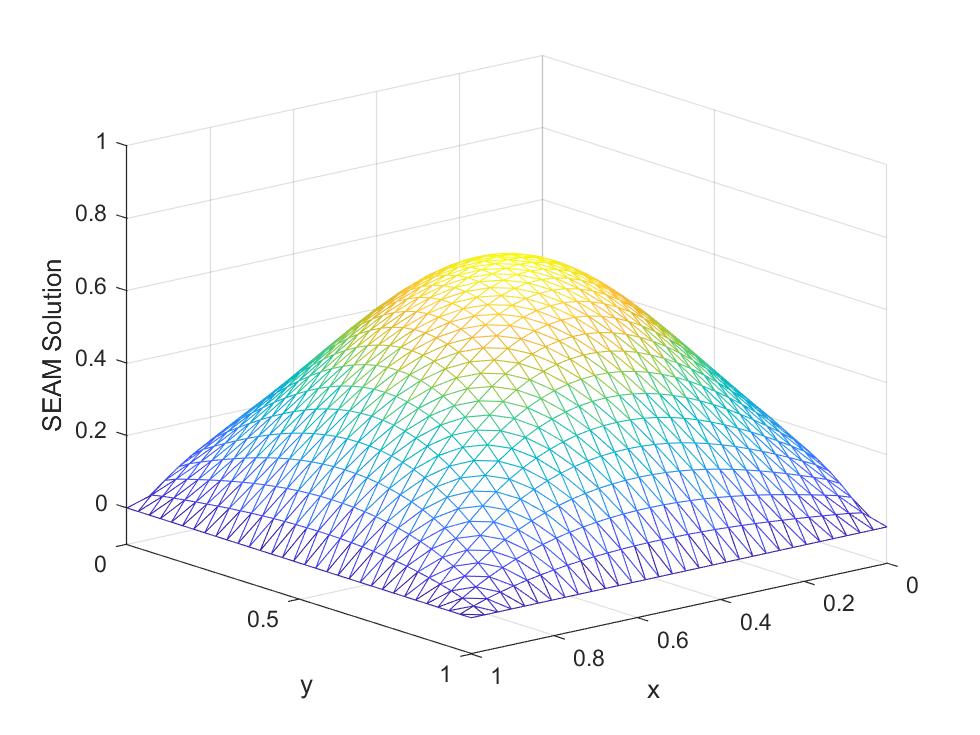}}
   
   \subfigure[  $t =0.75$]{\includegraphics[width=4cm, height =3cm]{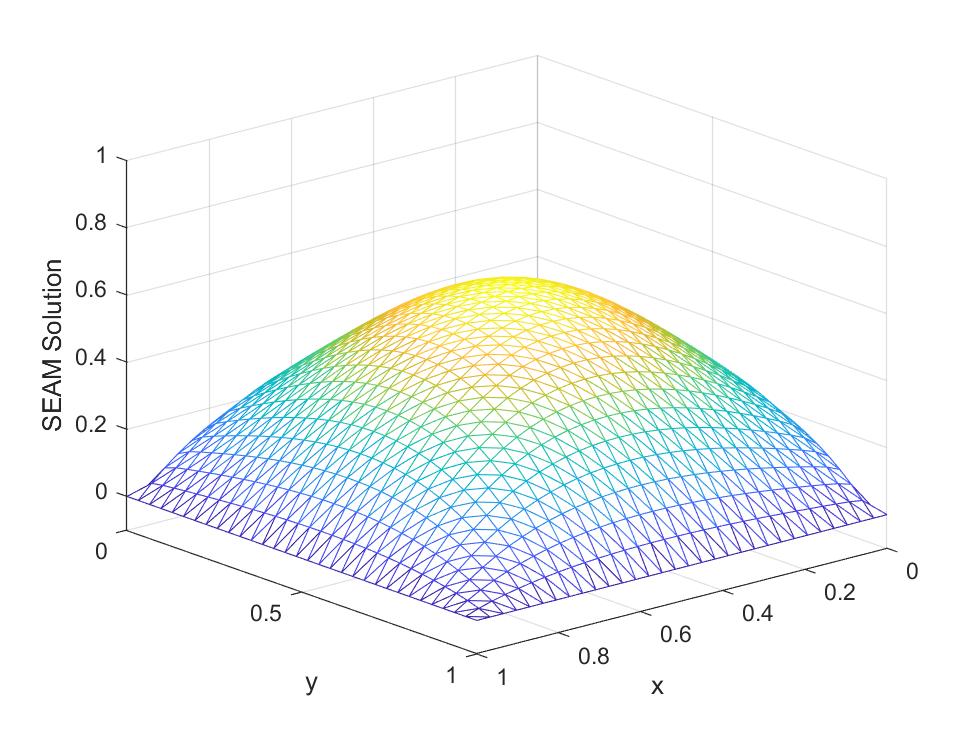}}
    \subfigure[ $t =1.00$]{\includegraphics[width=4cm, height =3cm]{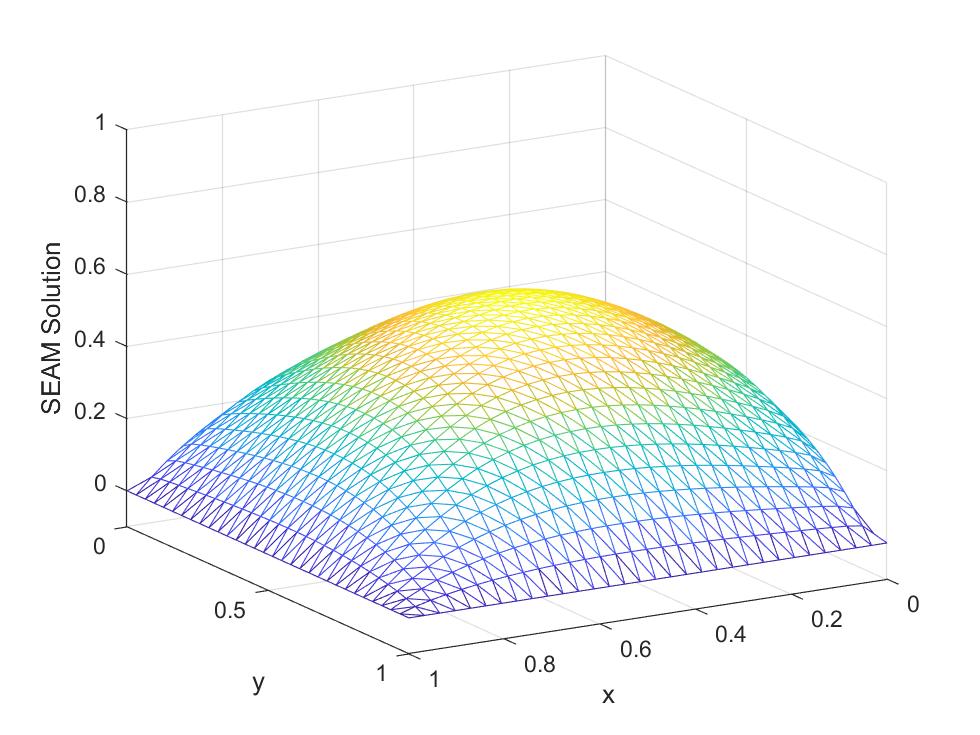}}\\
    SEAM solution\\
    \caption{High-fidelity / SEAM solutions of (\ref{nml2}) at different time for S2 with $f = 10$.}\label{seams2f10}
\end{figure}
\begin{figure}[htbp]
    \centering
   \subfigure[  $t = 0.25$]{\includegraphics[width=4cm, height =3cm]{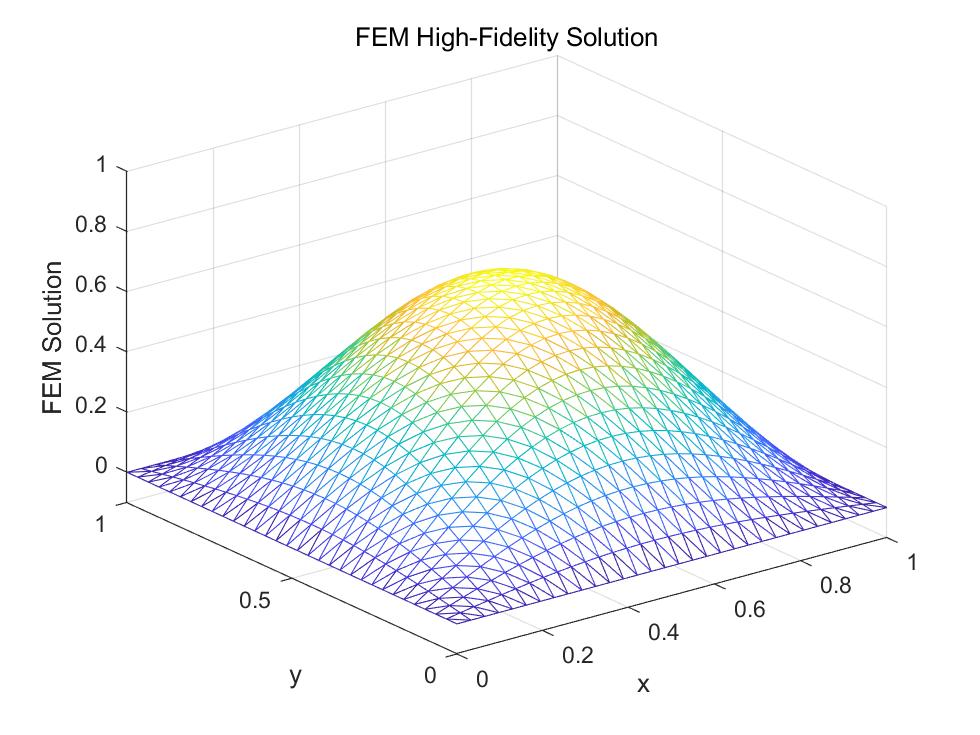}}
   \subfigure[  $t = 0.50$]{\includegraphics[width=4cm, height =3cm]{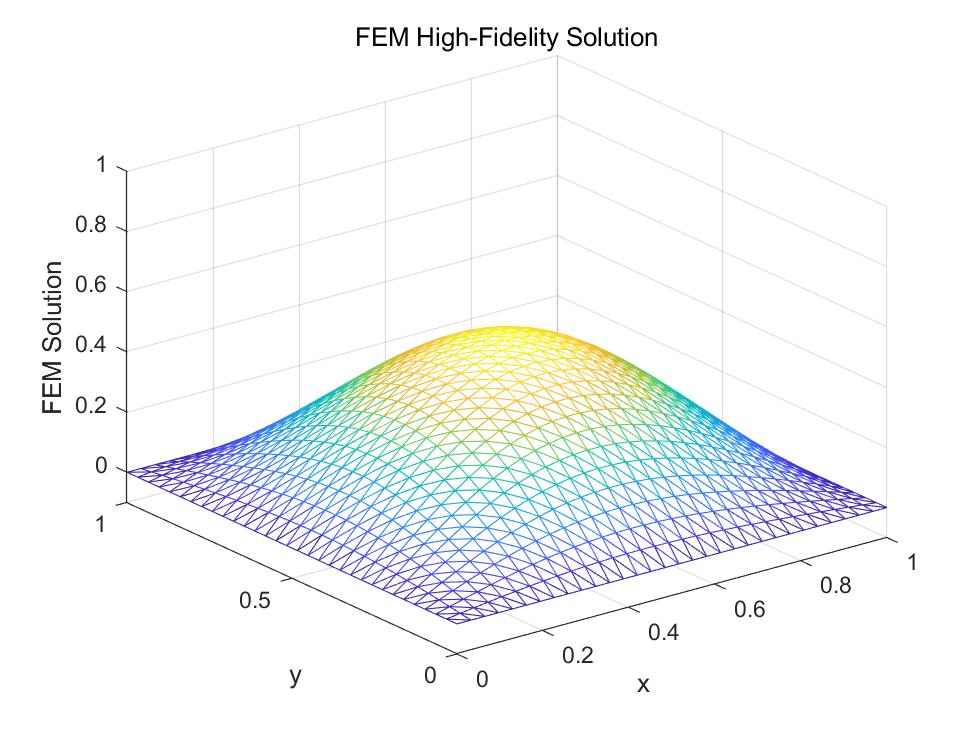}}
   
   \subfigure[  $t = 0.75$]{\includegraphics[width=4cm, height =3cm]{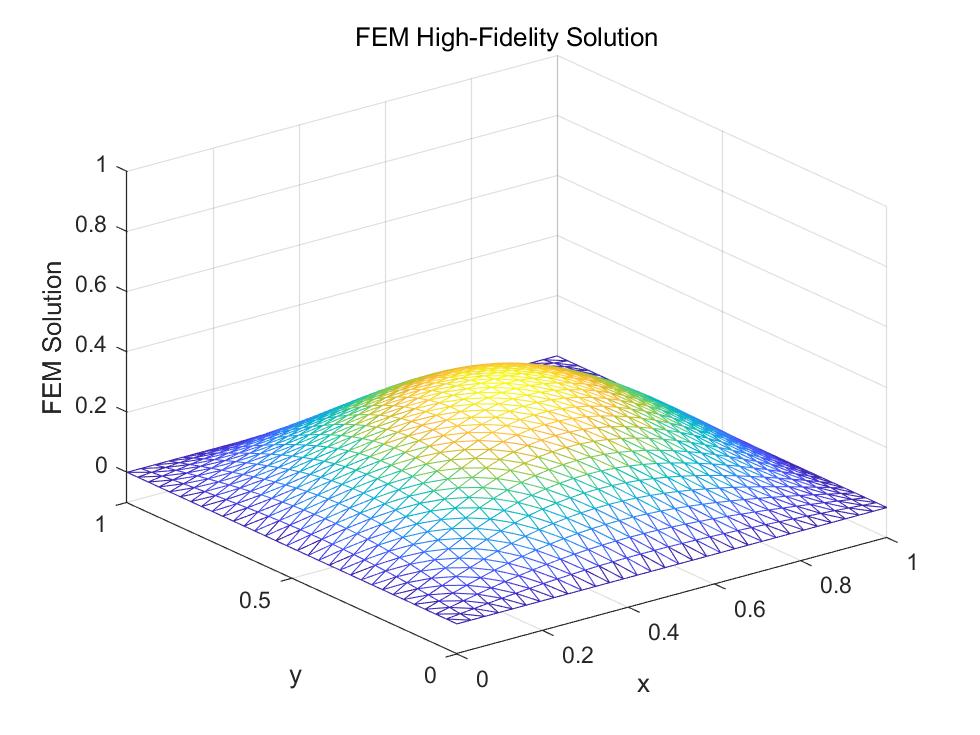}}
    \subfigure[ $t =1.00$]{\includegraphics[width=4cm, height =3cm]{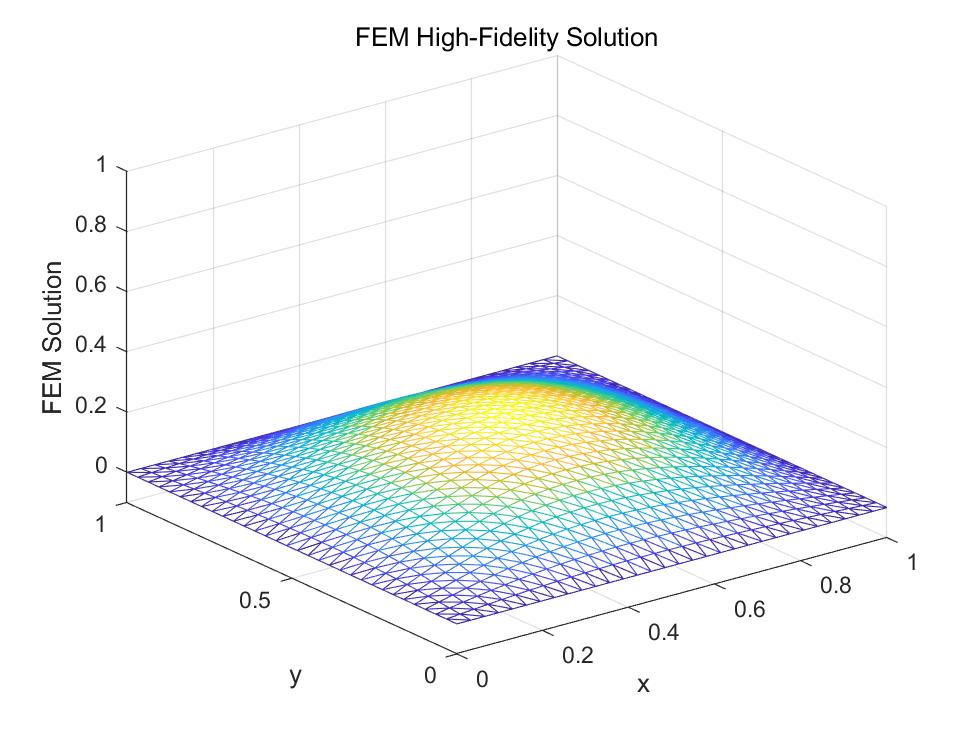}}\\
    High-fidelity numerical solution\\
    
     \subfigure[ $t =0.25$]{\includegraphics[width=4cm, height =3cm]{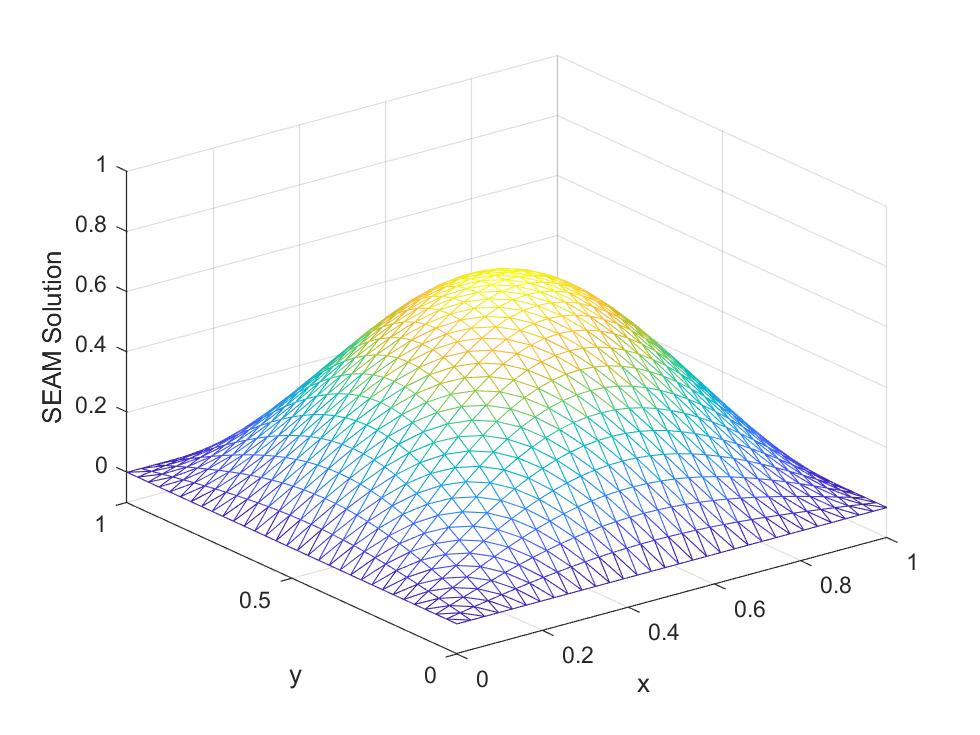}}
   \subfigure[ $t =0.50$]{\includegraphics[width=4cm, height =3cm]{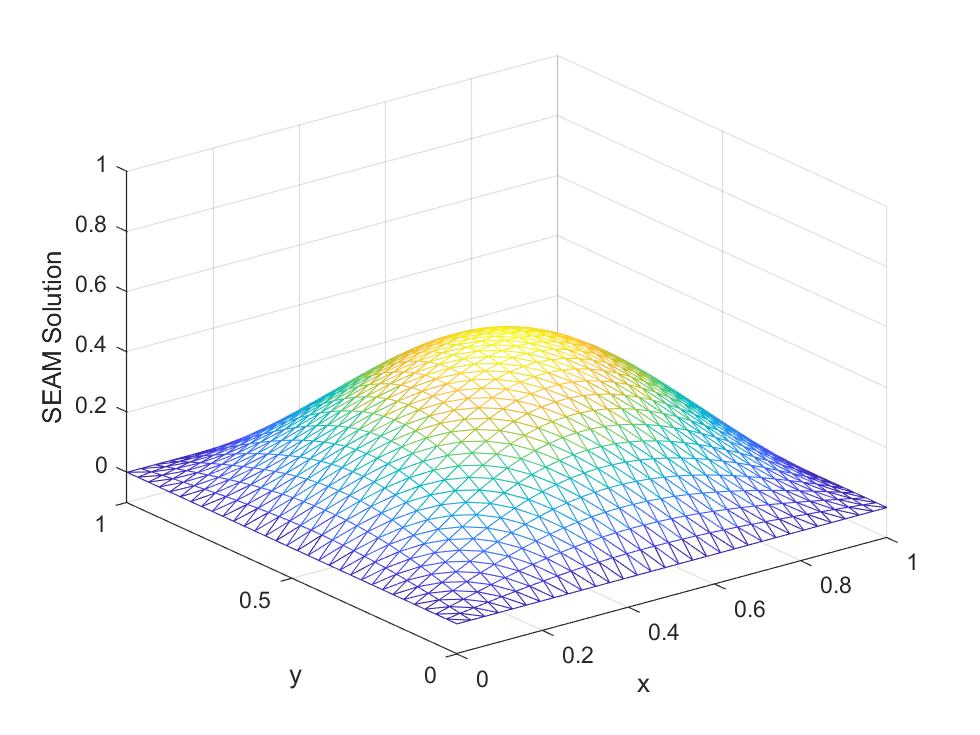}}
   
   \subfigure[  $t =0.75$]{\includegraphics[width=4cm, height =3cm]{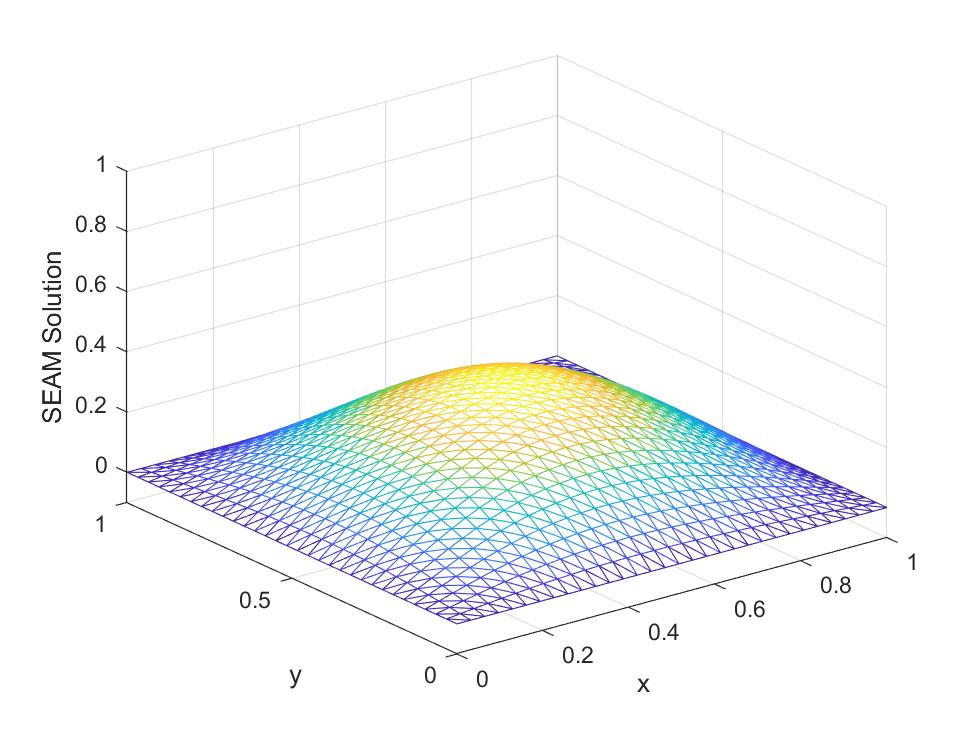}}
    \subfigure[ $t =1.00$]{\includegraphics[width=4cm, height =3cm]{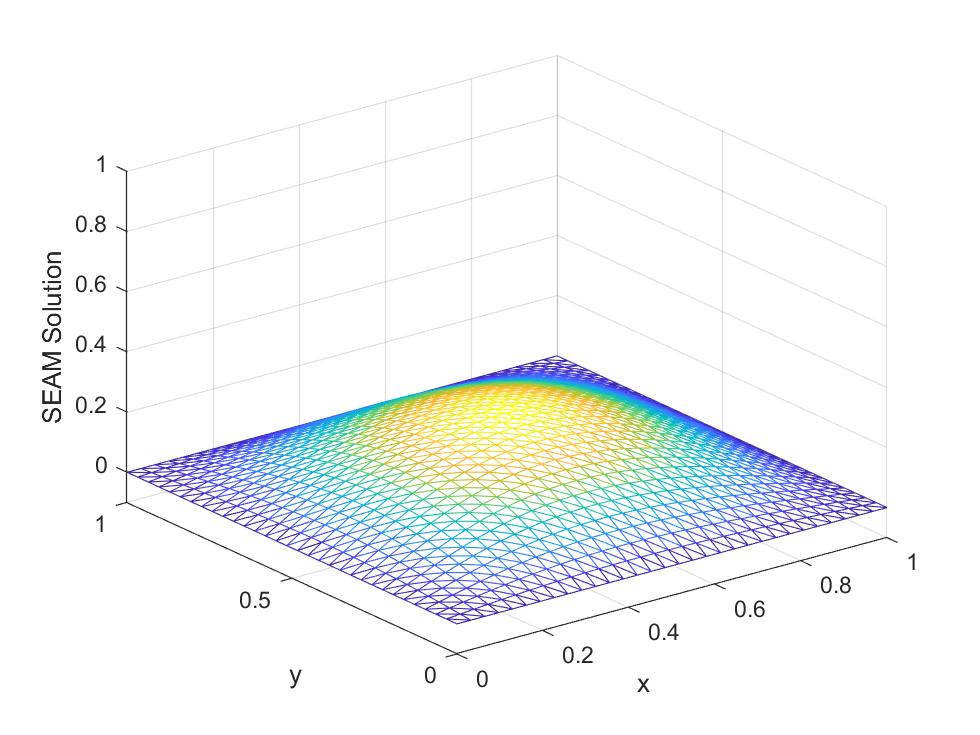}}\\
    SEAM solution\\
    \caption{High-fidelity / SEAM solutions of (\ref{nml2}) at different time for S2 with $f = xy$.}\label{seams2fxy}
\end{figure}
\begin{figure}[htbp]
    \centering
   \subfigure[  $t = 0.25$]{\includegraphics[width=4cm, height =3cm]{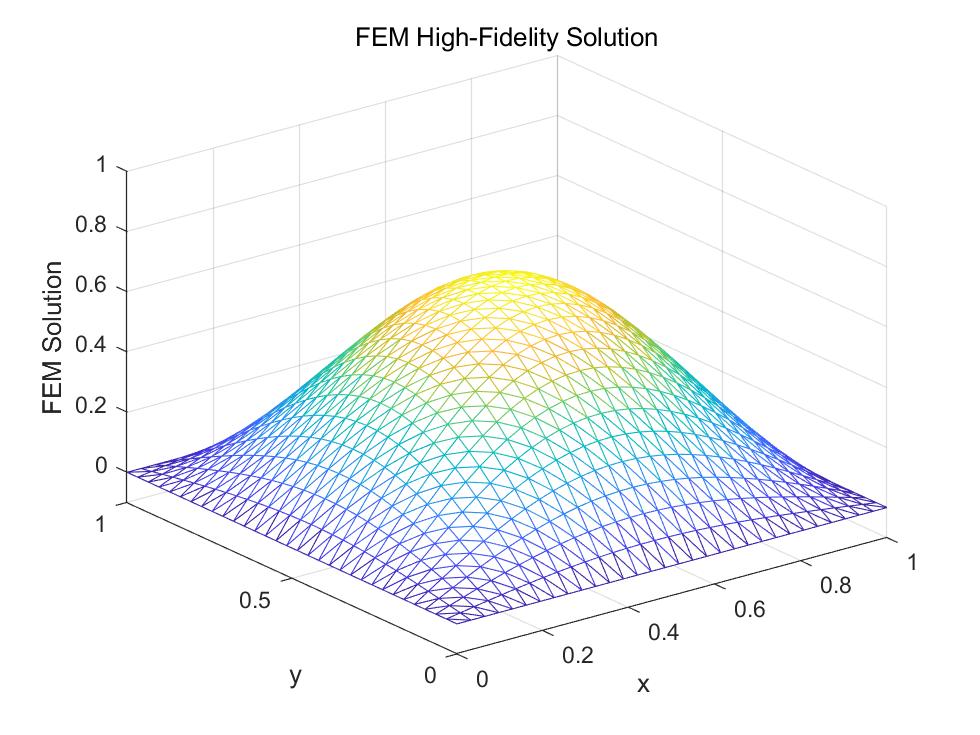}}
   \subfigure[  $t = 0.50$]{\includegraphics[width=4cm, height =3cm]{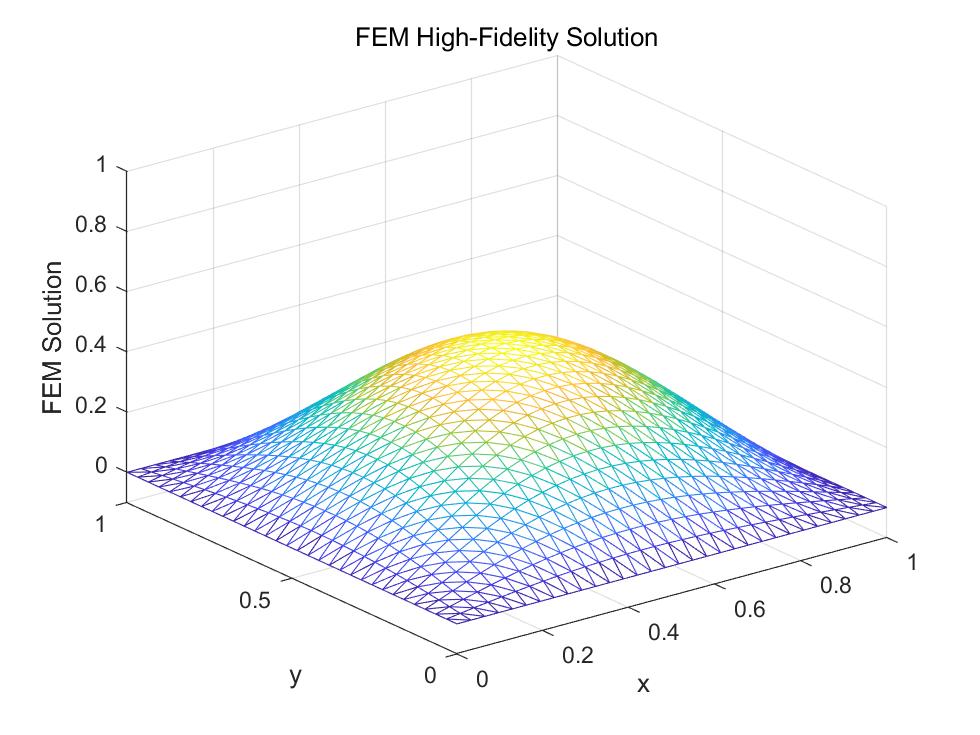}}
   
   \subfigure[  $t = 0.75$]{\includegraphics[width=4cm, height =3cm]{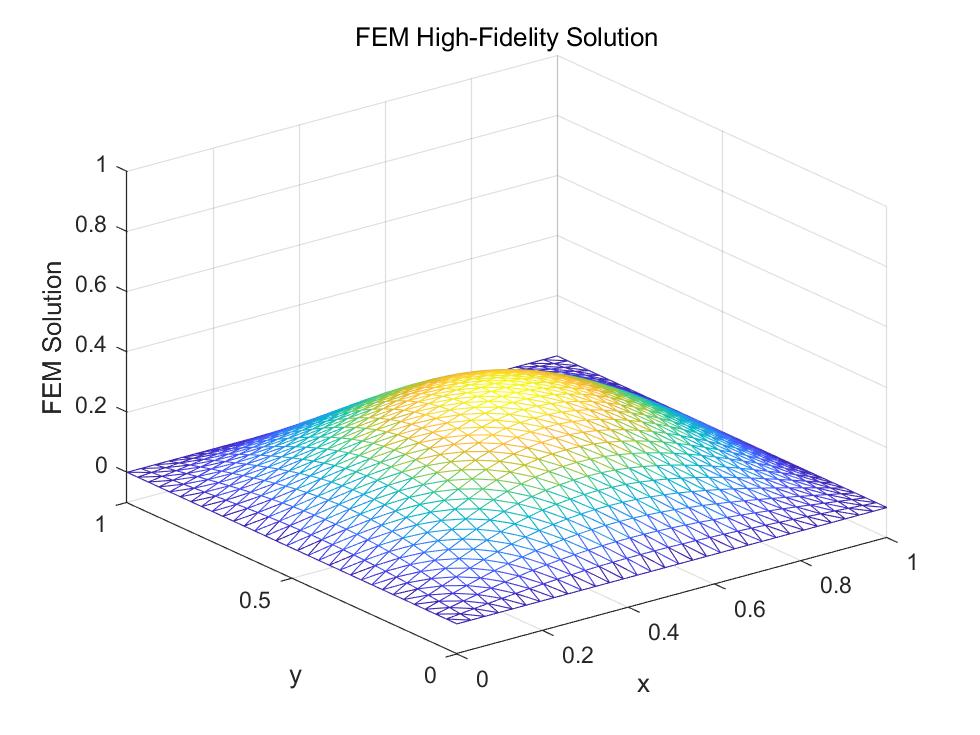}}
    \subfigure[ $t =1.00$]{\includegraphics[width=4cm, height =3cm]{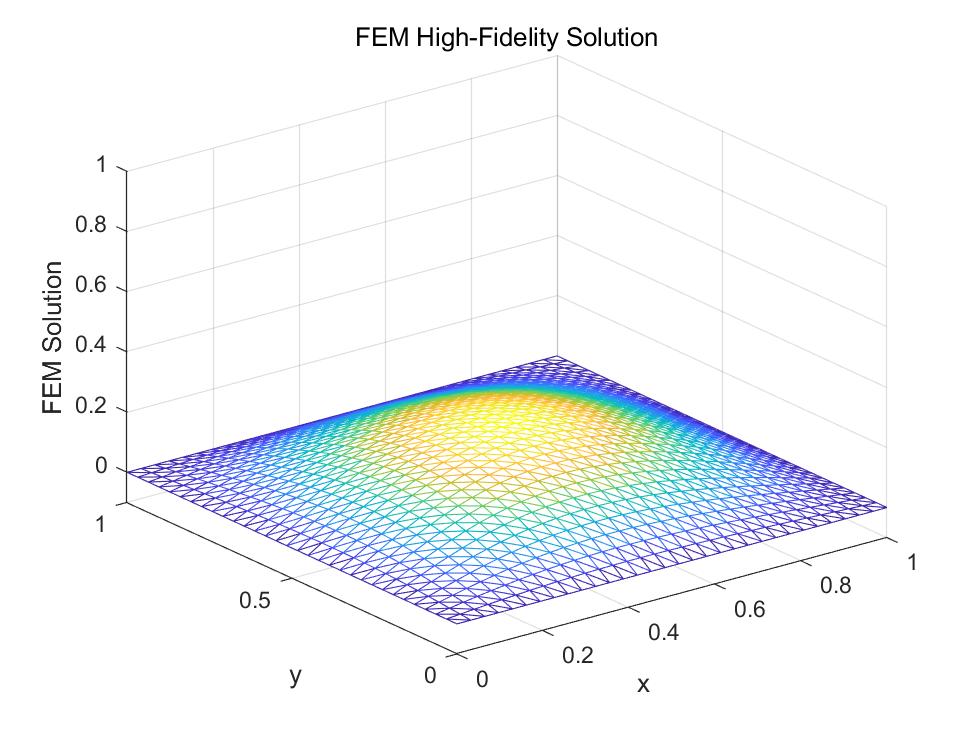}}\\
    High-fidelity numerical solution\\
    
     \subfigure[ $t =0.25$]{\includegraphics[width=4cm, height =3cm]{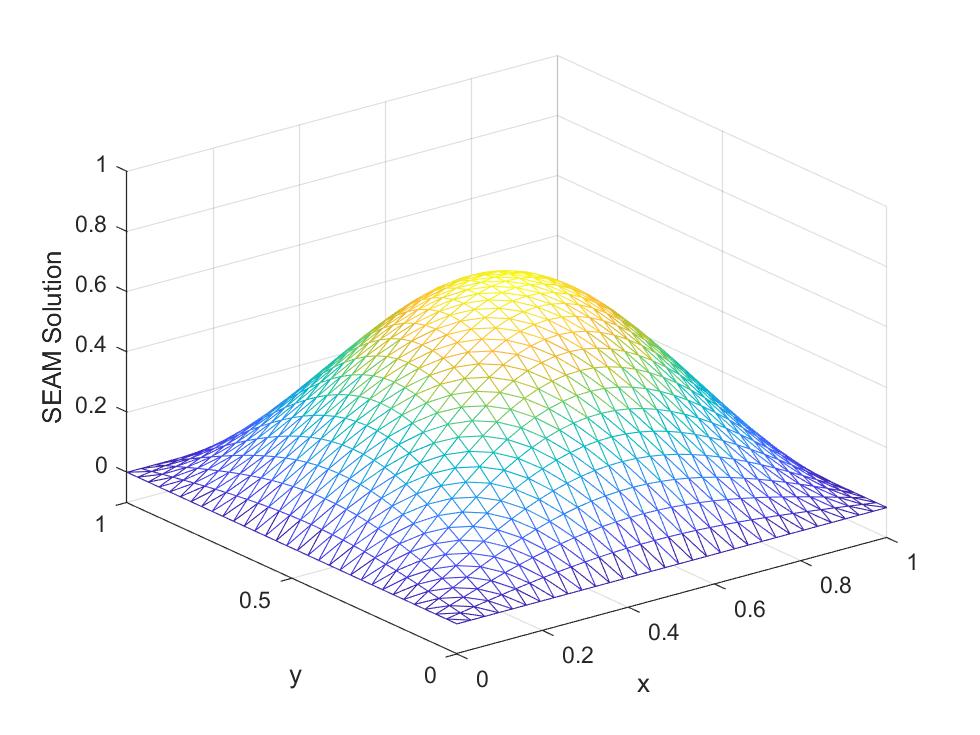}}
   \subfigure[ $t =0.50$]{\includegraphics[width=4cm, height =3cm]{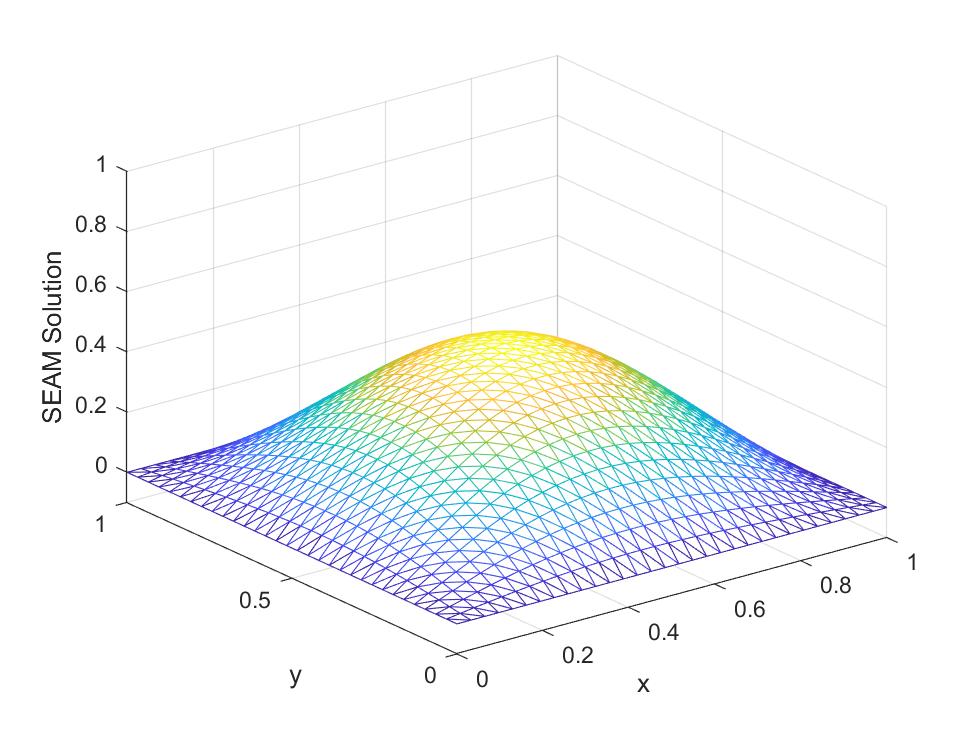}}
   
   \subfigure[  $t =0.75$]{\includegraphics[width=4cm, height =3cm]{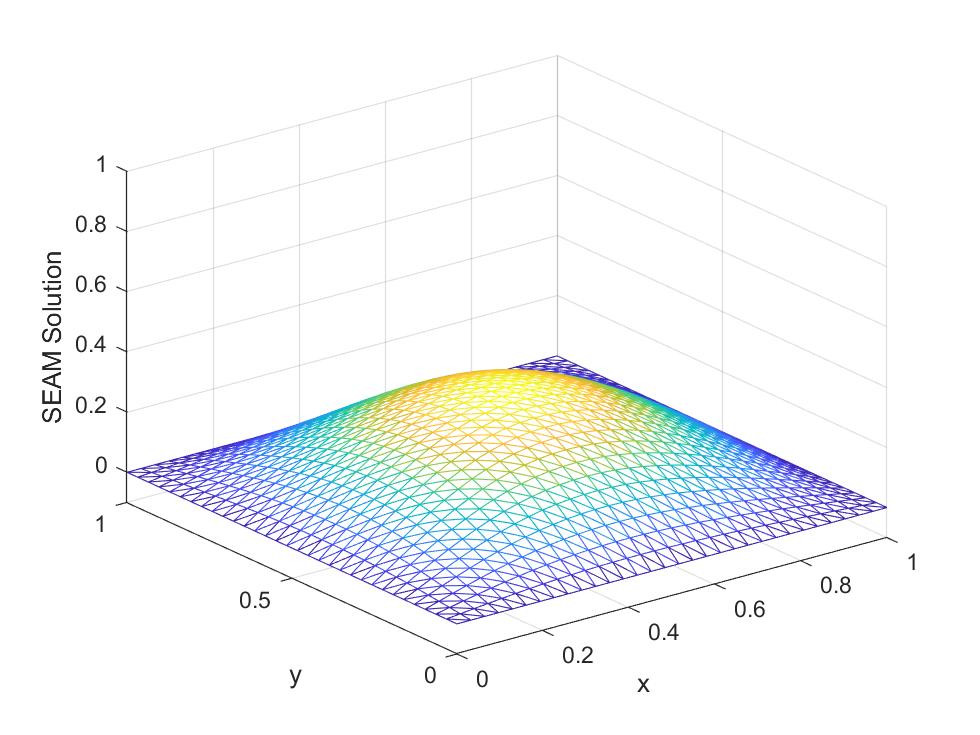}}
    \subfigure[ $t =1.00$]{\includegraphics[width=4cm, height =3cm]{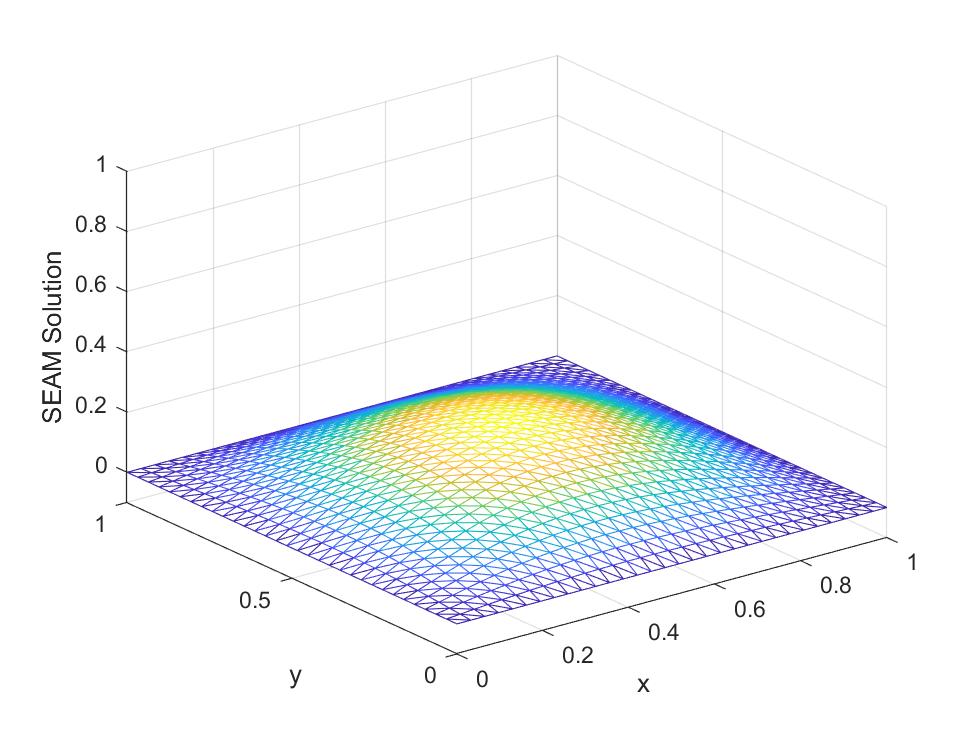}}\\
    SEAM solution\\
    \caption{High-fidelity / SEAM solutions of (\ref{nml2}) at different time for S3 with $f = 0$.}\label{seams3f0}
\end{figure}

\begin{table}[htbp]
    \centering
    \begin{tabular}{lr|lr}
    \hline
    \multicolumn{2}{l}{High-fidelity model}  &\multicolumn{2}{l}{SEAM}\\
    \hline
    \multirow{2}{*}{\makecell[l]{Number of FEM d.o.fs \\(each $t_n$)}}& \multirow{2}{*}{$M=961$ }    &  \multirow{2}{*}{\makecell[l]{Number of SEAM d.o.fs \\(each $t_n$)}} &\multirow{2}{*}{$p=1$}\\
    
    &&&\\
    \hline
             &      &     d.o.fs reduction & $961:1$\\
             \hline
    FE solution time ($S1,f=0$) &   8419s   &     SEAM solution time  & 12s\\
    FE solution time ($S1,f=xy$) &   8432s   &     SEAM solution time  & 14s\\
    FE solution time ($S2,f=0$) &   8510s   &     SEAM solution time  & 12s\\
    FE solution time ($S2,f=10$) &   8529s   &     SEAM solution time  & 11s\\
    FE solution time ($S2,f=xy$) &   8540s   &     SEAM solution time  & 13s\\  
    FE solution time ($S3,f=0$) &   301s   &     SEAM solution time  & 3s\\  
    \hline
    {SEAM-Error$_{L^{2}}$($S1,f=0$)} & &{$1.2e{-8}$}&\\
    {SEAM-Error$_{L^{2}}$($S1,f=xy$)} & & {$1.4e{-8}$}&\\
    {SEAM-Error$_{L^{2}}$($S2,f=0$)} & & {$2.1e{-8}$}&\\
    {SEAM-Error$_{L^{2}}$($S2,f=10$)} &  &{$1.6e{-8}$}&\\
    {SEAM-Error$_{L^{2}}$($S2,f=xy$)} &  &{$1.2e{-8}$}&\\
    {SEAM-Error$_{L^{2}}$($S3,f=0$)} &  &{$1.4e{-8}$}&\\
    \hline
    \end{tabular}
    \caption{Computational details for the high-fidelity / SEAM  solutions.}\label{details-2D}
\end{table}

\subsection{A three-dimensional test}
We consider the following three-dimensional problem: 
\begin{equation}\label{nml3}
\left\{\begin{array}{ll}
\frac{\partial u}{\partial t}=\frac{\partial^{2} u}{\partial x^{2}}+\frac{\partial^{2} u}{\partial y^{2}} + \frac{\partial^{2} u}{\partial z^{2}}, & \text{ in } D\times (0,T], \\
u=\sin (2 \pi x)\sin (2 \pi y)\sin(2 \pi z), &\text { on } D \times\{0\}, \\
u = 0,& \text { on } \partial D \times[0, T],
\end{array}\right.
\end{equation}
where $D = (0,1)\times (0,1)\times(0,1)$ and $T = 1$. 
To obtain  the high-fidelity numerical solution,  we first divide   the spatial domain $D$  uniformly   into $32 \times 32 \times 32=32768$ cubes, then divide each  cube into  $6$   tetrahedrons, and for the temporal subdivision we still take  $\tau= 2.5\times 10^{-3}$ and $n=\tilde{n} = 20$
 in  the full discretization \eqref{femscheme}.

 Figure \ref{eigen-3D} shows  the distribution of eigenvalues of  $\mathfrak{U}_{i}^\top \mathfrak{U}_{i}$, which is consistent with Corollary \ref{corr1}.
  Figure \ref{3D-solu} shows the  high-fidelity /  SEAM solutions of (\ref{nml3}),  and Table \ref{details-3D} gives some computational details as well as the relative errors between the  the  SEAM solution   and the high-fidelity numerical solution.
 We can see that  the SEAM algorithm yields  an accurate approximation solution and is much faster than  the high-fidelity numerical method,  due to the remarkable reduction of the size of the discrete model from $M=29397$ to $p=1$.

\begin{figure}[htbp]
    \centering
    \includegraphics[width=9cm, height =5cm]{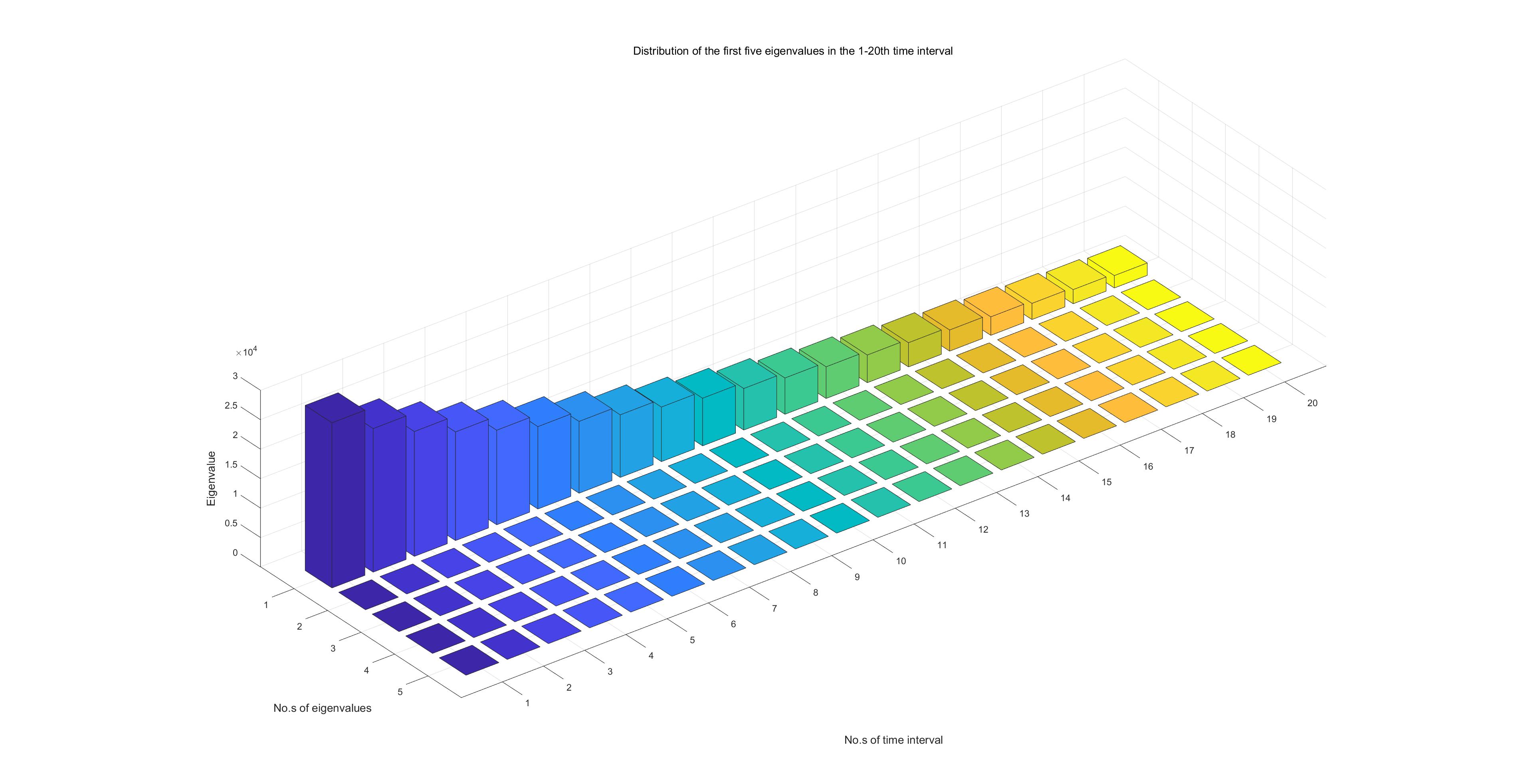}
    \caption{The first 5 eigenvalues of $\mathfrak{U}_{i}^\top \mathfrak{U}_{i}$ (from big to small) for \eqref{nml3}.}\label{eigen-3D}
\end{figure}

\begin{figure}[htbp]
    \centering
   \subfigure[  $t = 0.25$]{\includegraphics[width=5cm, height =3cm]{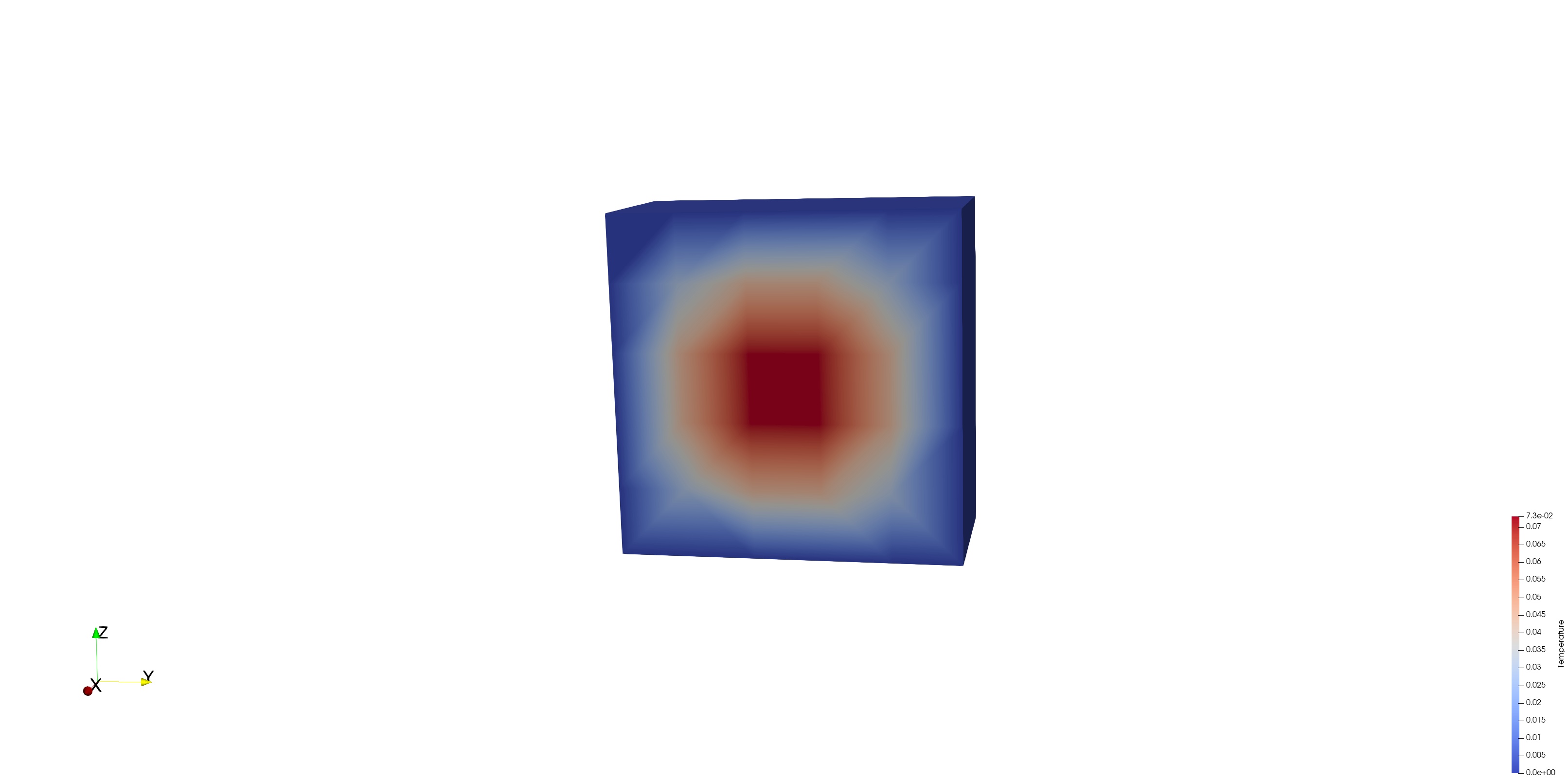}}
   \subfigure[  $t = 0.50$]{\includegraphics[width=5cm, height =3cm]{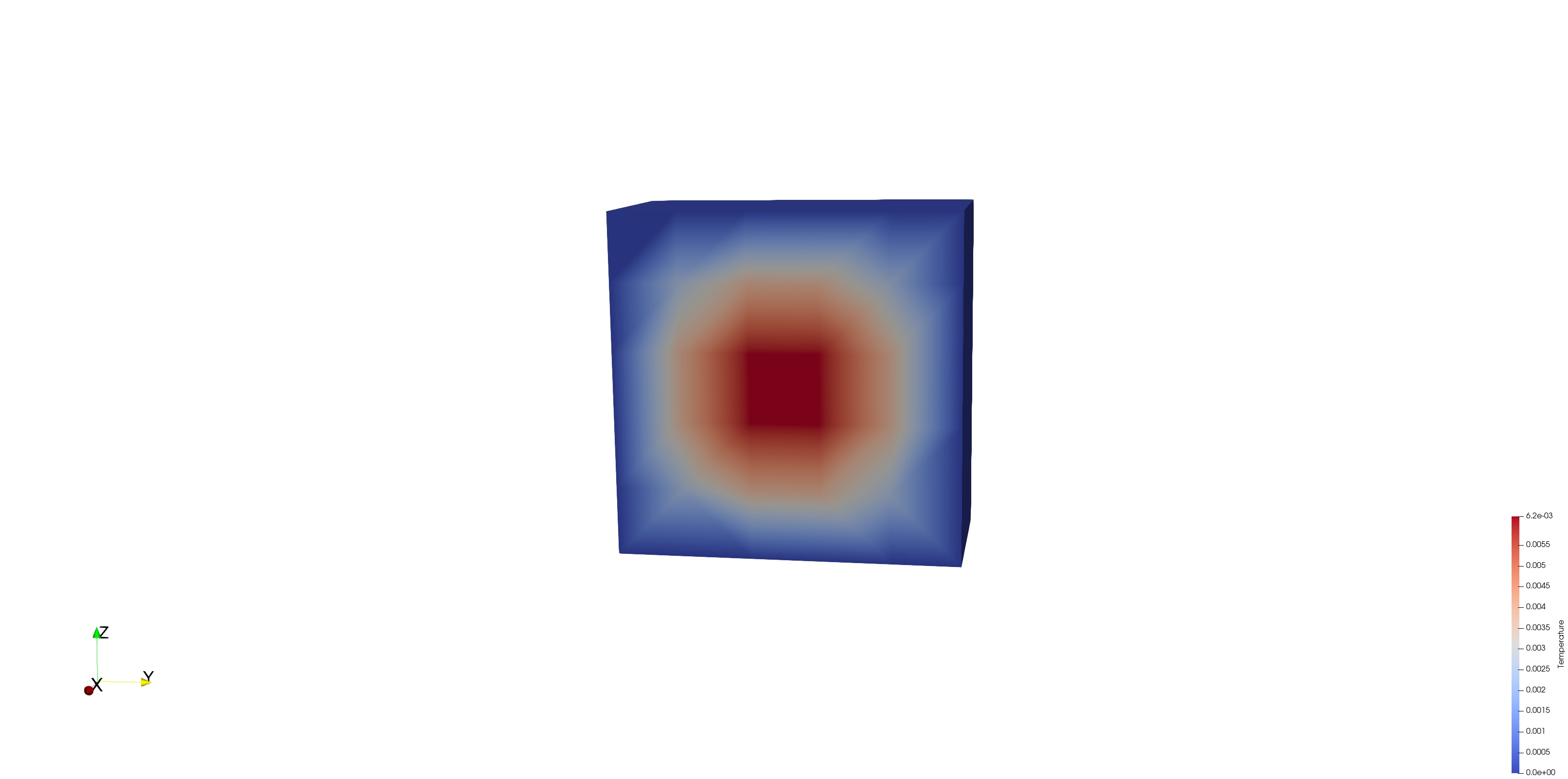}}
   
   \subfigure[  $t = 0.75$]{\includegraphics[width=5cm, height =3cm]{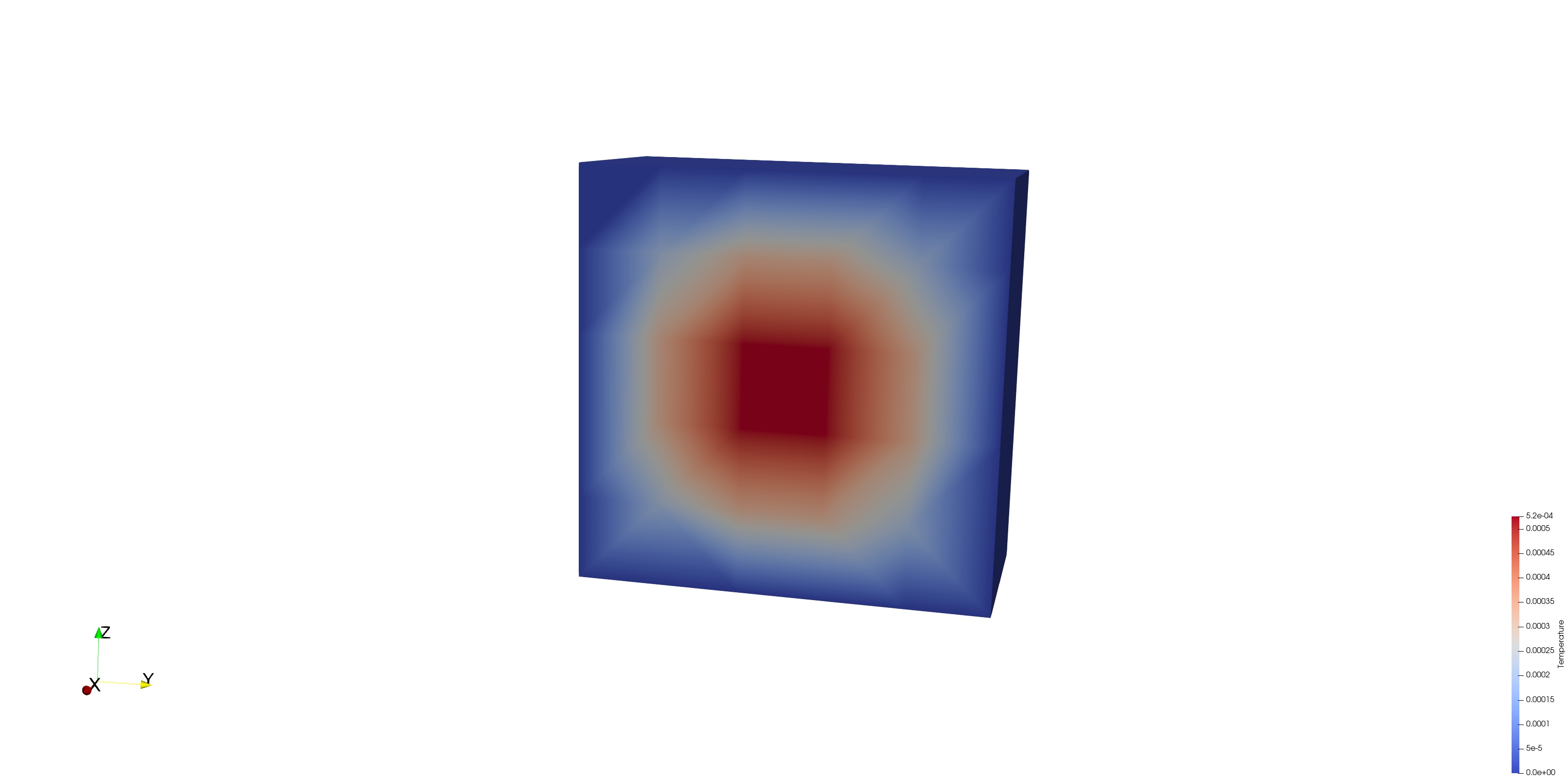}}
    \subfigure[ $t =1.00$]{\includegraphics[width=5cm, height =3cm]{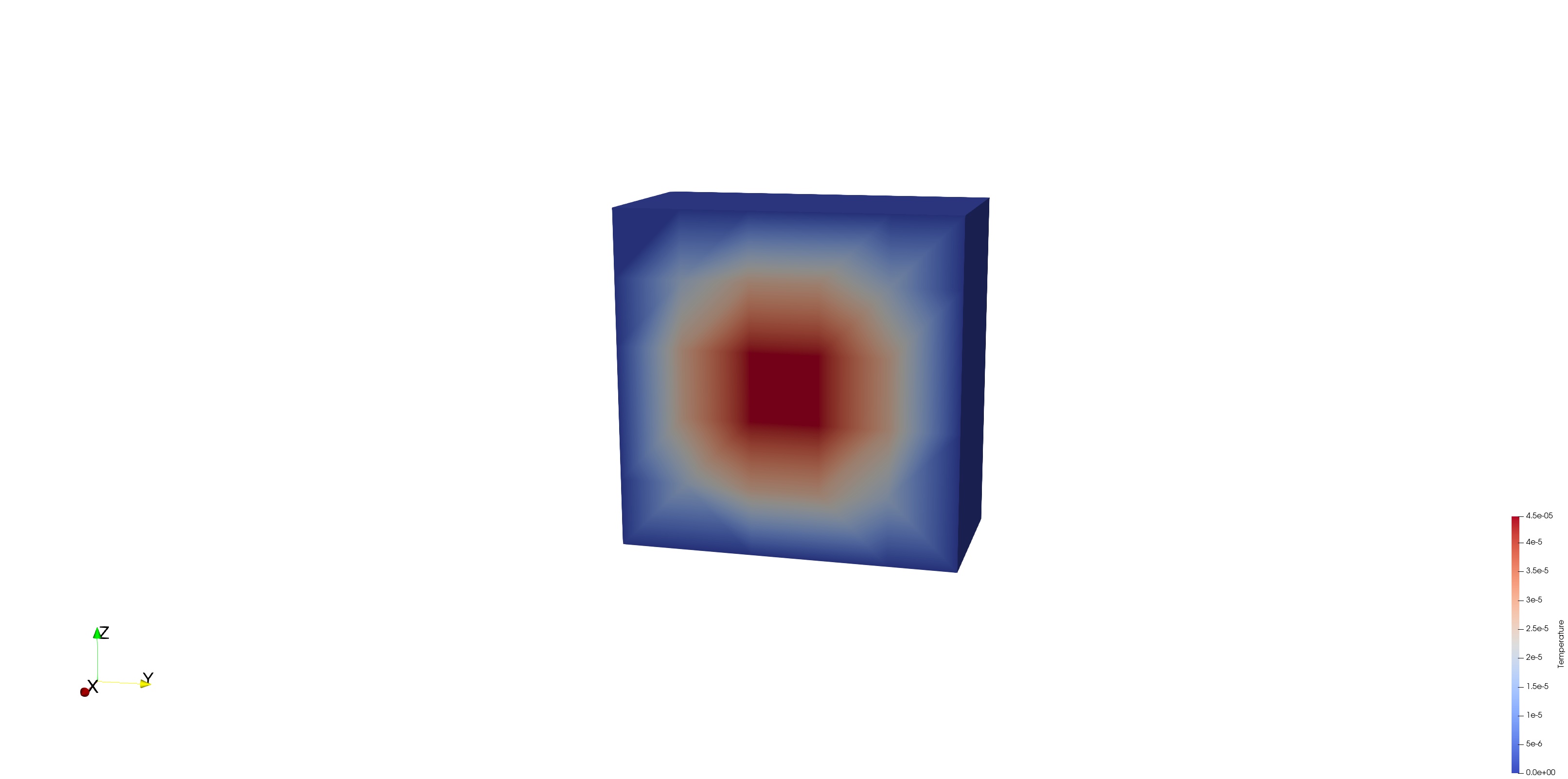}}\\
    High-fidelity numerical solution\\
    
     \subfigure[ $t =0.25$]{\includegraphics[width=5cm, height =3cm]{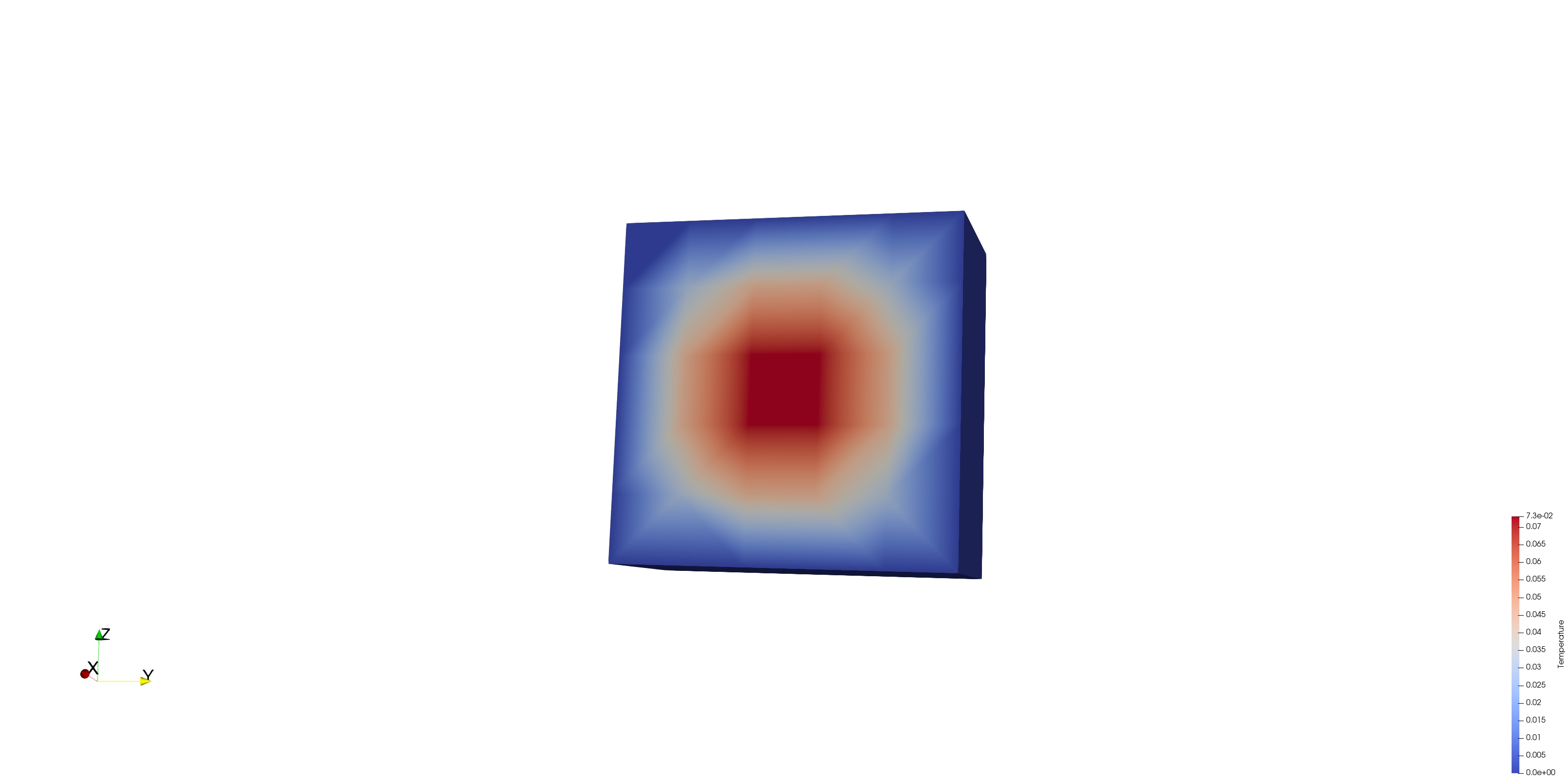}}
   \subfigure[ $t =0.50$]{\includegraphics[width=5cm, height =3cm]{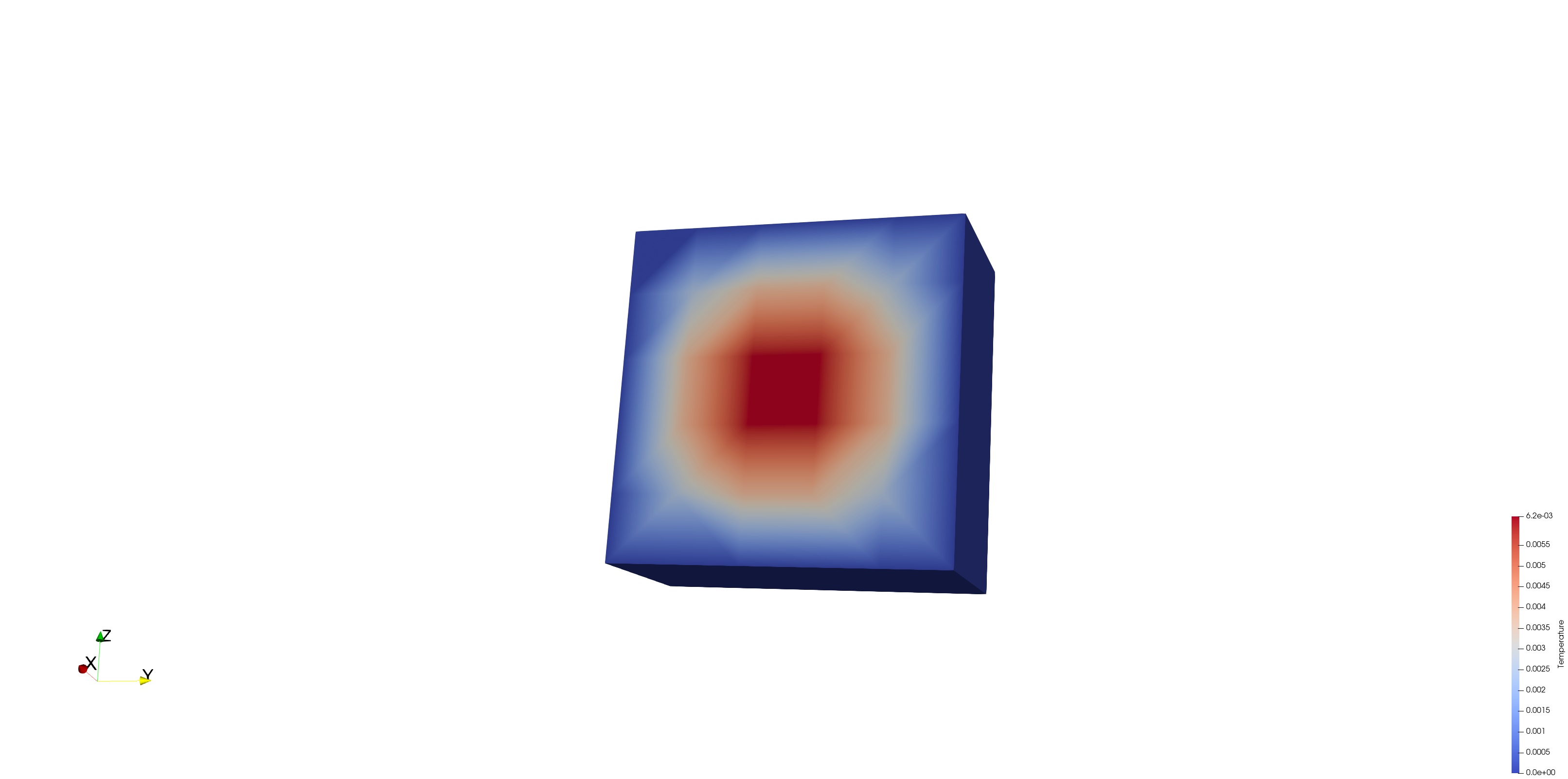}}
   
   \subfigure[  $t =0.75$]{\includegraphics[width=5cm, height =3cm]{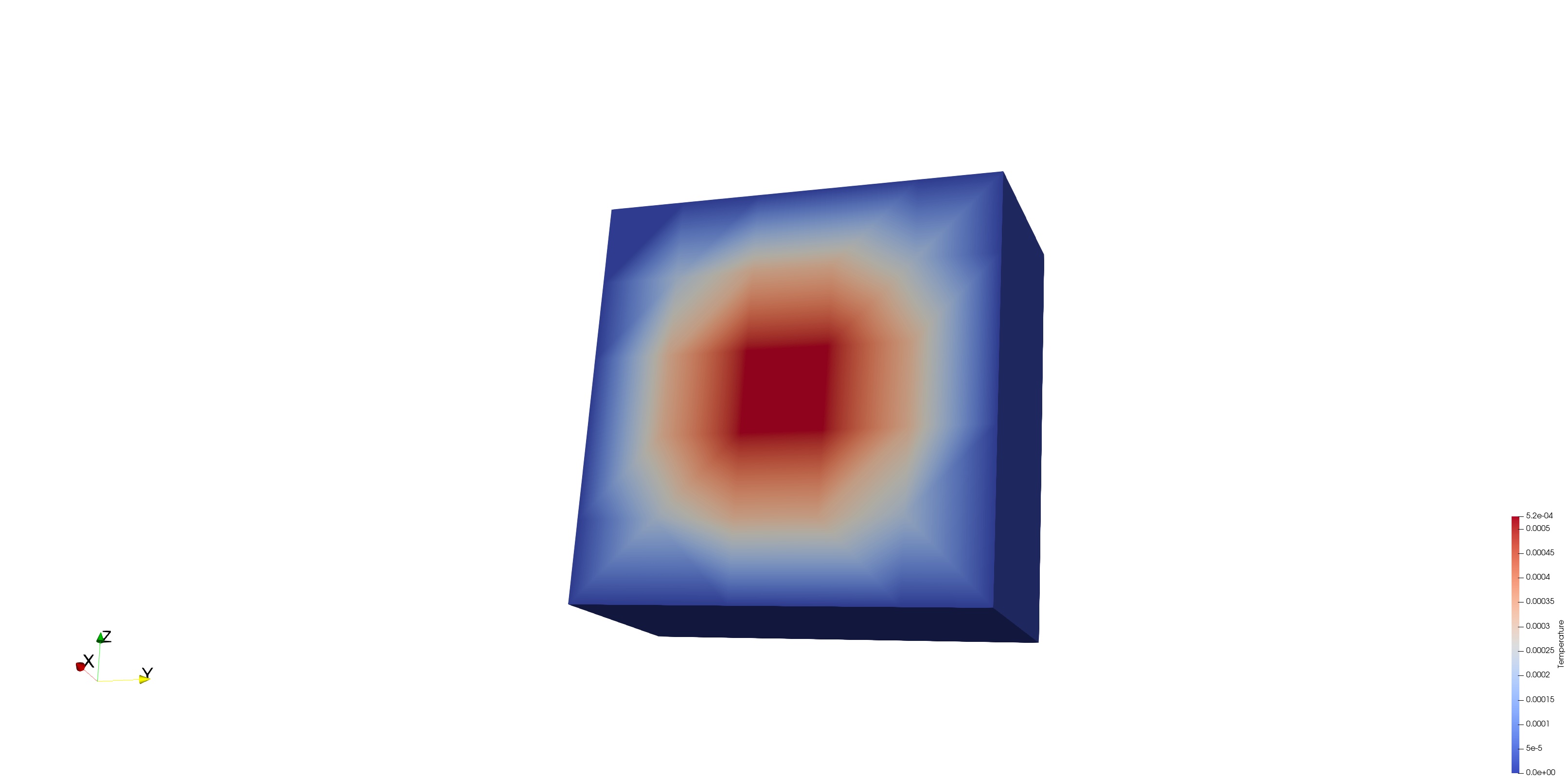}}
    \subfigure[ $t =1.00$]{\includegraphics[width=5cm, height =3cm]{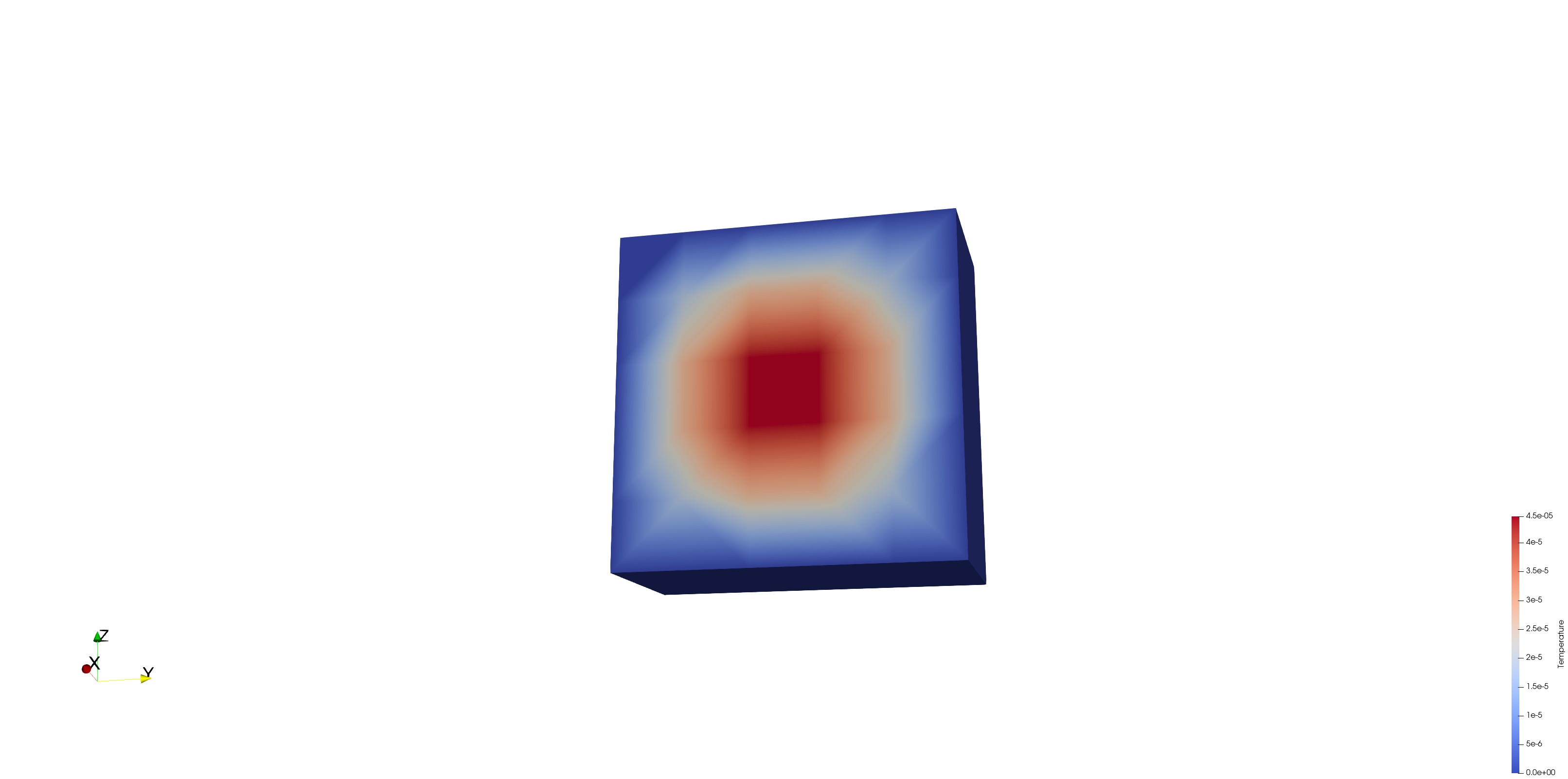}}\\
    SEAM solution\\
    \caption{High-fidelity / SEAM solutions of (\ref{nml3}) at different time.}\label{3D-solu}
\end{figure}

\begin{table}[htbp]
    \centering
    \begin{tabular}{lr|lr}
    \hline
    \multicolumn{2}{l}{High-fidelity model}  &\multicolumn{2}{l}{SEAM}\\
    \hline
    \multirow{2}{*}{\makecell[l]{Number of FEM d.o.fs \\(each $t_n$)}}& \multirow{2}{*}{$M=29397$ }    & \multirow{2}{*}{\makecell[l]{ Number of SEAM d.o.fs \\ (each $t_n$)}} &\multirow{2}{*}{$p=1$}\\
    &&&\\
    \hline
             &      &     d.o.fs reduction & $29397:1$\\
   \hline
    FE solution time &   40320s   &     SEAM solution time  &  8s\\
    \hline
    \multicolumn{2}{l}{$\text{SEAM-Error}_{L^{2}}$} &  \multicolumn{2}{l}{$1.2e{-8 }$}\\
    \hline
    \end{tabular}
    \caption{Computational details for the high-fidelity / SEAM solutions of (\ref{nml3}).}\label{details-3D}
\end{table}

\section{Conclusion}

We have  studied    the singular value distribution   of the numerical solution matrix corresponding to a full discretization of  second order parabolic equations that  uses the backward Euler scheme in time and continuous simplicial finite elements in space. Based on the property that the singular value distribution of the matrix formed by the high-fidelity solution is similar to a rank one matrix 
under the assumption that the time step size is sufficiently small, 
we have proposed  the SEAM algorithm by choosing  the eigenvector associated to the largest   singular value as the POD basis. The resulting RB scheme at each time step is a linear equation with only one unknown. 
Numerical experiments have demonstrated the remarkable efficiency of SEAM.

\section{Acknowledgement}
This work was supported in part by the National Natural Science Foundation of China (12171340) and National Key R\&D Program of China (2020YFA0714000).

\newpage
\bibliographystyle{siam}
\bibliography{ref}

\begin{thebibliography}{10}

\bibitem{ahlman_2002_proper}
{\sc D.~Ahlman, F.~Söderlund, J.~Jackson, A.~Kurdila, and W.~Shyy}, {\em
  Proper orthogonal decomposition for time-dependent lid-driven cavity flows},
  Numerical Heat Transfer, Part B: Fundamentals, 42 (2002), pp.~285--306.

\bibitem{doi:10.2514/3.7539}
{\sc B.~O. Almroth, P.~Stern, and F.~A. Brogan}, {\em Automatic choice of
  global shape functions in structural analysis}, AIAA Journal, 16 (1978),
  pp.~525--528.

\bibitem{aubry_1988_the}
{\sc N.~Aubry, P.~Holmes, J.~L. Lumley, and E.~Stone}, {\em The dynamics of
  coherent structures in the wall region of a turbulent boundary layer},
  Journal of Fluid Mechanics, 192 (1988), pp.~115--173.

\bibitem{https://doi.org/10.1002/zamm.19950750709}
{\sc A.~Barrett and G.~Reddien}, {\em On the reduced basis method}, ZAMM -
  Journal of Applied Mathematics and Mechanics / Zeitschrift für Angewandte
  Mathematik und Mechanik, 75 (1995), pp.~543--549.

\bibitem{Berkooz1992}
{\sc G.~Berkooz}, {\em Observations on the Proper Orthogonal Decomposition},
  Springer New York, New York, NY, 1992, pp.~229--247.

\bibitem{cao_2006_reducedorder}
{\sc Y.~Cao, J.~Zhu, Z.~Luo, and I.~Navon}, {\em Reduced-order modeling of the
  upper tropical pacific ocean model using proper orthogonal decomposition},
  Computers \& Mathematics with Applications, 52 (2006), pp.~1373--1386.

\bibitem{doi:10.2514/1.J056161}
{\sc W.~Chen, J.~S. Hesthaven, B.~Junqiang, Y.~Qiu, Z.~Yang, and Y.~Tihao},
  {\em Greedy nonintrusive reduced order model for fluid dynamics}, AIAA
  Journal, 56 (2018), pp.~4927--4943.

\bibitem{crommelin_2004_strategies}
{\sc D.~T. Crommelin and A.~J. Majda}, {\em Strategies for model reduction:
  Comparing different optimal bases}, Journal of the Atmospheric Sciences, 61
  (2004), pp.~2206--2217.

\bibitem{eftang2011hp}
{\sc J.~L. Eftang, D.~J. Knezevic, and A.~T. Patera}, {\em An hp certified
  reduced basis method for parametrized parabolic partial differential
  equations}, Mathematical and Computer Modelling of Dynamical Systems, 17
  (2011), pp.~395--422.

\bibitem{Fink1983OnTE}
{\sc J.~P. Fink and W.~C. Rheinboldt}, {\em On the error behavior of the
  reduced basis technique for nonlinear finite element approximations},
  Zamm-zeitschrift Fur Angewandte Mathematik Und Mechanik, 63 (1983),
  pp.~21--28.

\bibitem{10.1007/BF01391412}
{\sc J.~P. Fink and W.~C. Rheinboldt}, {\em Solution manifolds and submanifolds
  of parametrized equations and their discretization errors}, Numer. Math., 45
  (1984), p.~323–343.

\bibitem{GUNZBURGER1989123}
{\sc M.~D. Gunzburger}, {\em Finite Element Methods for Viscous Incompressible
  Flows}, Computer Science and Scientific Computing, Academic Press, San Diego,
  1989.

\bibitem{HESTHAVEN201855}
{\sc J.~Hesthaven and S.~Ubbiali}, {\em Non-intrusive reduced order modeling of
  nonlinear problems using neural networks}, Journal of Computational Physics,
  363 (2018), pp.~55--78.

\bibitem{doi:10.1142/9789812796936_0011}
{\sc A.~Hoffman and H.~Wielandt}, {\em The variation of the spectrum of a
  normal matrix}, Duke Math. J., 20 (1953), pp.~37--39.

\bibitem{holmes_2012_turbulence}
{\sc P.~Holmes}, {\em Turbulence, coherent structures, dynamical systems and
  symmetry}, Cambridge University Press, 2012.

\bibitem{iapichino2017multiobjective}
{\sc L.~Iapichino, S.~Ulbrich, and S.~Volkwein}, {\em Multiobjective
  pde-constrained optimization using the reduced-basis method}, Advances in
  Computational Mathematics, 43 (2017), pp.~945--972.

\bibitem{ito1998reduced}
{\sc K.~Ito and S.~Ravindran}, {\em A reduced basis method for control problems
  governed by pdes}, in Control and estimation of distributed parameter
  systems, Springer, 1998, pp.~153--168.

\bibitem{ito1998reduced1}
{\sc K.~Ito and S.~S. Ravindran}, {\em A reduced-order method for simulation
  and control of fluid flows}, Journal of computational physics, 143 (1998),
  pp.~403--425.

\bibitem{jolliffe_2011_principal}
{\sc I.~T. Jolliffe}, {\em Principal component analysis}, Springer, 2011.

\bibitem{joslin_1997_selfcontained}
{\sc R.~D. Joslin, M.~D. Gunzburger, R.~A. Nicolaides, G.~Erlebacher, and M.~Y.
  Hussaini}, {\em Self-contained automated methodology for optimal flow
  control}, AIAA Journal, 35 (1997), pp.~816--824.

\bibitem{keinosuke_1990_introduction}
{\sc F.~Keinosuke}, {\em Introduction to Statistical Pattern Recognition},
  Academic Press, 2~ed., 1990.

\bibitem{kunisch_2001_galerkin}
{\sc Kunisch and Volkwein}, {\em Galerkin proper orthogonal decomposition
  methods for parabolic problems}, Numerische Mathematik, 90 (2001),
  pp.~117--148.

\bibitem{kunisch_2002_galerkin}
{\sc {{Kunisch and Volkwein}}}, {\em Galerkin proper orthogonal decomposition
  methods for a general equation in fluid dynamics}, SIAM Journal on Numerical
  Analysis, 40 (2002), pp.~492--515.

\bibitem{lieberman2010parameter}
{\sc C.~Lieberman, K.~Willcox, and O.~Ghattas}, {\em Parameter and state model
  reduction for large-scale statistical inverse problems}, SIAM Journal on
  Scientific Computing, 32 (2010), pp.~2523--2542.

\bibitem{LUMLEY1981215}
{\sc J.~Lumley}, {\em Transition and Turbulence}, Academic Press, 1981.

\bibitem{LUO2019xi}
{\sc Z.~Luo and G.~Chen}, {\em Proper Orthogonal Decomposition Methods for
  Partial Differential Equations}, Mathematics in Science and Engineering,
  Academic Press, 2019.

\bibitem{luo_2007_an}
{\sc Z.~Luo, J.~Chen, J.~Zhu, R.~Wang, and I.~M. Navon}, {\em An optimizing
  reduced order fds for the tropical pacific ocean reduced gravity model},
  International Journal for Numerical Methods in Fluids, 55 (2007),
  pp.~143--161.

\bibitem{luo_2007_finite}
{\sc Z.~Luo, R.~Wang, and J.~Zhu}, {\em Finite difference scheme based on
  proper orthogonal decomposition for the nonstationary navier-stokes
  equations}, Science in China Series A: Mathematics, 50 (2007),
  pp.~1186--1196.

\bibitem{luo_2007_proper}
{\sc Z.~Luo, J.~Zhu, R.~Wang, and I.~Navon}, {\em Proper orthogonal
  decomposition approach and error estimation of mixed finite element methods
  for the tropical pacific ocean reduced gravity model}, Computer Methods in
  Applied Mechanics and Engineering, 196 (2007), pp.~4184--4195.

\bibitem{ly_2002_proper}
{\sc H.~V. Ly and H.~T. Tran}, {\em Proper orthogonal decomposition for flow
  calculations and optimal control in a horizontal cvd reactor}, Quarterly of
  Applied Mathematics, 60 (2002), pp.~631--656.

\bibitem{maday2015parameterized}
{\sc Y.~Maday, A.~T. Patera, J.~D. Penn, and M.~Yano}, {\em A
  parameterized-background data-weak approach to variational data assimilation:
  formulation, analysis, and application to acoustics}, International Journal
  for Numerical Methods in Engineering, 102 (2015), pp.~933--965.

\bibitem{Mainini20151612}
{\sc L.~Mainini and K.~Willcox}, {\em Surrogate modeling approach to support
  real-time structural assessment and decision making}, AIAA Journal, 53
  (2015), p.~1612 – 1626.
\newblock Cited by: 39.

\bibitem{majda_2003_systematic}
{\sc A.~J. Majda, I.~Timofeyev, and E.~Vanden-Eijnden}, {\em Systematic
  strategies for stochastic mode reduction in climate}, Journal of the
  Atmospheric Sciences, 60 (2003), pp.~1705--1722.

\bibitem{manzoni2019certified}
{\sc A.~Manzoni and S.~Pagani}, {\em A certified rb method for pde-constrained
  parametric optimization problems}, Communications in Applied and Industrial
  Mathematics, 10 (2019), pp.~123--152.

\bibitem{moin_1989_characteristiceddy}
{\sc P.~Moin and R.~D. Moser}, {\em Characteristic-eddy decomposition of
  turbulence in a channel}, Journal of Fluid Mechanics, 200 (1989),
  pp.~471--509.

\bibitem{FRSLIIIOL}
{\sc K.~Pearson}, {\em Liii. on lines and planes of closest fit to systems of
  points in space}, Philosophical Magazine Series 1, 2, pp.~559--572.

\bibitem{doi:10.1137/0910047}
{\sc J.~S. Peterson}, {\em The reduced basis method for incompressible viscous
  flow calculations}, SIAM Journal on Scientific and Statistical Computing, 10
  (1989), pp.~777--786.

\bibitem{10.1115/1.1448332}
{\sc C.~Prudhomme, D.~V. Rovas, K.~Veroy, L.~Machiels, Y.~Maday, A.~T. Patera,
  and G.~Turinici}, {\em {Reliable Real-Time Solution of Parametrized Partial
  Differential Equations: Reduced-Basis Output Bound Methods }}, Journal of
  Fluids Engineering, 124 (2001), pp.~70--80.

\bibitem{alfioquarteroni_2016_reduced}
{\sc A.~Quarteroni, A.~Manzoni, and F.~Negri}, {\em Reduced Basis Methods for
  Partial Differential Equations An Introduction}, Cham Springer International
  Publishing, 2016.

\bibitem{rajaee_1994_lowdimensional}
{\sc M.~Rajaee, S.~K.~F. Karlsson, and L.~Sirovich}, {\em Low-dimensional
  description of free-shear-flow coherent structures and their dynamical
  behaviour}, Journal of Fluid Mechanics, 258 (1994), pp.~1--29.

\bibitem{Rheinboldt1993OnTT}
{\sc W.~C. Rheinboldt}, {\em On the theory and error estimation of the reduced
  basis method for multi-parameter problems}, Nonlinear Analysis-theory Methods
  \& Applications, 21 (1993), pp.~849--858.

\bibitem{selten_1997_baroclinic}
{\sc F.~M. Selten}, {\em Baroclinic empirical orthogonal functions as basis
  functions in an atmospheric model}, Journal of the Atmospheric Sciences, 54
  (1997), pp.~2099--2114.

\bibitem{sirovich_1987_turbulence1}
{\sc Sirovich and Lawrence}, {\em Turbulence and the dynamics of coherent
  structures. Parts I-III.}, Quarterly of Applied Mathematics, 45
  (1987), pp.~561--590.

\bibitem{sirovich_1987_turbulence2}
{\sc {Sirovich and Lawrence}}, {\em Turbulence and the dynamics of coherent
  structures. ii. symmetries and transformations}, Quarterly of Applied
  Mathematics, 45 (1987), pp.~573--582.

\bibitem{sirovich_1987_turbulence3}
{\sc {{Sirovich and Lawrence}}}, {\em Turbulence and the dynamics of coherent
  structures. iii. dynamics and scaling}, Quarterly of Applied Mathematics, 45
  (1987), pp.~583--590.

\bibitem{SWISCHUK2019704}
{\sc R.~Swischuk, L.~Mainini, B.~Peherstorfer, and K.~Willcox}, {\em
  Projection-based model reduction: Formulations for physics-based machine
  learning}, Computers \& Fluids, 179 (2019), pp.~704--717.

\bibitem{doi:10.1080/21681163.2015.1030775}
{\sc E.~Ulu, R.~Zhang, and L.~B. Kara}, {\em A data-driven investigation and
  estimation of optimal topologies under variable loading configurations},
  Computer Methods in Biomechanics and Biomedical Engineering: Imaging \&
  Visualization, 4 (2016), pp.~61--72.

\bibitem{doi:10.2514/6.2003-3847}
{\sc K.~Veroy, C.~Prud'homme, D.~Rovas, and A.~Patera}, {\em A posteriori error
  bounds for reduced-basis approximation of parametrized noncoercive and
  nonlinear elliptic partial differential equations}, 16th AIAA Computational
  Fluid Dynamics Conference,  (2003).

\end{thebibliography}
\end{document}